\newtheorem{theorem}{Theorem}
\newtheorem{remark}{Remark}
\newtheorem{assumption}{Assumption}
\newtheorem{corollary}{Corollary}
\newtheorem{proposition}{Proposition}
\newtheorem{lemma}{Lemma}
\newdefinition{definition}{Definition}
\newcommand{\cP}{\mathcal{P}}
\newcommand{\jb}{\mathbf{j}}
\newcommand{\cU}{\mathcal{U}}
\newcommand{\cG}{\mathcal{G}}
\newcommand{\cT}{\mathcal{T}}
\newcommand{\InDeg}{\mathcal{N}}
\newcommand{\ib}{\mathbf{i}}
\newcommand{\ui}{\underline{\ib}}
\newcommand{\cD}{\mathcal{D}}
\newcommand{\ceil}[1]{\left\lceil #1 \right\rceil}
\journal{Stochastic Processes and their Applications}
\begin{document}

\begin{frontmatter}

%% Title, authors and addresses

%% use the tnoteref command within \title for footnotes;
%% use the tnotetext command for theassociated footnote;
%% use the fnref command within \author or \address for footnotes;
%% use the fntext command for theassociated footnote;
%% use the corref command within \author for corresponding author footnotes;
%% use the cortext command for theassociated footnote;
%% use the ead command for the email address,
%% and the form \ead[url] for the home page:
%% \title{Title\tnoteref{label1}}
%% \tnotetext[label1]{}
%% \author{Name\corref{cor1}\fnref{label2}}
%% \ead{email address}
%% \ead[url]{home page}
%% \fntext[label2]{}
%% \cortext[cor1]{}
%% \affiliation{organization={},
%%             addressline={},
%%             city={},
%%             postcode={},
%%             state={},
%%             country={}}
%% \fntext[label3]{}

\title{Local weak limits for collapsed branching processes with random out-degrees}

%% use optional labels to link authors explicitly to addresses:
%% \author[label1,label2]{}
%% \affiliation[label1]{organization={},
%%             addressline={},
%%             city={},
%%             postcode={},
%%             state={},
%%             country={}}
%%
%% \affiliation[label2]{organization={},
%%             addressline={},
%%             city={},
%%             postcode={},
%%             state={},
%%             country={}}

\author{Sayan Banerjee, Prabhanka Deka, and Mariana Olvera-Cravioto}

\affiliation{organization={Department of Statistics and Operations Research},%Department and Organization
            addressline={University of North Carolina, Chapel Hill, CB \#3260}, 
            city={Chapel Hill},
            postcode={27599}, 
            state={NC},
            country={USA}}

\begin{abstract}
%% Text of abstract
We obtain local weak limits in probability for Collapsed Branching Processes (CBP), which are directed random networks obtained by collapsing random-sized families of individuals in a general continuous-time branching process. The local weak limit of a given CBP, as the network grows, is shown to be a related continuous-time branching process stopped at an independent exponential time. The proof involves the construction of an explicit coupling of the in-components of vertices with the limiting object. We also show that the in-components of a finite collection of uniformly chosen vertices locally weakly converge (in probability) to i.i.d. copies of the above limit, reminiscent of propagation of chaos in interacting particle systems. We obtain as special cases novel descriptions of the local weak limits of directed preferential and uniform attachment models. We also outline some applications of our results for analyzing the limiting in-degree and PageRank distributions. In particular, upper and lower bounds on the tail of the in-degree distribution are obtained and a phase transition is detected in terms of the growth rate of the attachment function governing reproduction rates in the branching process.
\end{abstract}

%%Graphical abstract
%\begin{graphicalabstract}
%\includegraphics{grabs}
%\end{graphicalabstract}

%%Research highlights
%\begin{highlights}
%\item Research highlight 1
%\item Research highlight 2
%\end{highlights}

\begin{keyword}
%% keywords here, in the form: keyword \sep keyword
Local weak limit \sep collapsed branching processes \sep continuous-time branching processes \sep random out-degrees \sep couplings \sep directed preferential attachment \sep in-degree \sep PageRank \sep power laws

%% PACS codes here, in the form: \PACS code \sep code

%% MSC codes here, in the form: \MSC code \sep code
%% or \MSC[2008] code \sep code (2000 is the default)
\MSC[2020] 05C80 \sep 60J80 \sep 41A60 \sep 60B10

\end{keyword}

\end{frontmatter}

%% \linenumbers

%% main text

\section{Introduction}

We analyze in this paper an evolving directed random graph model that is obtained by collapsing a continuous-time branching process driven by a general Markovian pure birth process. Our model corresponds to a graph process where incoming individuals (nodes) arrive in families, or groups, each having a random number of individuals. Upon arrival, each member of the family chooses one other existing node with probability proportional to a function $f$ (called the attachment function) of its degree (one plus the number of inbound edges), and connects to it using a directed outbound edge.  The members of each family arrive sequentially, so that if the first family has $D_1^+$ members, then the $(D_1^++1)$th individual belongs to the second family. For a graph with $n$ groups, the process continues according to this rule until $S_n = D_1^+ + \dots + D_n^+$ individuals have been connected to the graph, at which point we proceed to merge all the individuals in each family into a single ``group vertex". The result is a directed multigraph $G(V_n, E_n)$, having vertices $V_n = \{1, 2, \dots, n\}$ and edges in the set $E_n$, that models the connections among families. Our choice of notation $D_i^+$ for the size of the $i$th family comes from the observation that $D_i^+$ becomes the out-degree (number of outbound edges) of vertex $i$ in $V_n$ (with the exception of vertex 1, whose out-degree will be $D_1^+-1$). 

When the attachment function is linear (or constant), $G(V_n, E_n)$ constructed in this way corresponds to a directed preferential attachment graph (respectively, a directed uniform attachment graph) with random out-degrees and random additive fitness (see \eqref{upa}). In this case, vertices in the graph can be thought of as individuals, who upon arrival choose a random number of vertices to connect to, with edges always pointing from younger vertices to older ones.  

For the case where $D_i^+$ is a fixed constant $d$ for all $i$, the above model was introduced in \cite{garavaglia2018trees} under the name of Collapsed Branching Process (CBP), and we will use this nomenclature for our more general model. The limiting degree distribution was investigated in \cite{garavaglia2018trees}. The analysis of the model with random out-degrees and, in particular, the description of local weak limits (asymptotics of neighborhoods of typical vertices in large networks) were left as open problems.

The main focus of this paper is to describe the local weak limit for the collapsed branching process (CBP) graph for a general form of the attachment function. Local weak convergence is informally described as the phenomenon where finite neighborhoods of a uniformly chosen vertex in a growing sequence of finite graphs converges weakly to the neighborhood of the root in some limiting graph (finite or infinite). This concept was introduced in \cite{aldous2004objective,benjamini2011recurrence} and has turned out to be an indispensable tool in understanding the local geometry of large graphs through, for example, the degree and PageRank distributions (see Section \ref{S.Properties} for the definition and properties of this centrality measure popularized by Google), the size of giant components and the behavior of random walks on them. Moreover, local weak convergence has been used to show that a variety of random graphs are \emph{locally tree-like},  that is, non-tree graph sequences have local limits which are trees. This phenomenon also applies to our model, as will be seen below.
See \cite{van2023random} for a detailed treatment of local weak convergence and its applications. 

For random graphs, most of the existing results on local weak convergence concern \emph{static} random graphs like Erd\H os-Renyi graphs, inhomogeneous random graphs and the configuration model \cite[Chapters 2, 3, 4 and references therein]{van2023random}. Here, the randomness comes from the degree distribution and edge connection probabilities but there is no temporal evolution, making the roles of vertices \emph{exchangeable}. Consequently, the local weak limits for most of those graphs correspond to Galton-Watson trees where, in particular, every vertex in the same generation has the same progeny distribution. However, there has been comparatively little work on local weak convergence of dynamic random graphs (graphs evolving over time). The main obstacle is that the time evolution assigns ages to the vertices and the local geometry around a vertex depends crucially on its age. The local weak limit of the CBP in the tree case was obtained in \cite{rudas-2}, building on the work of \cite{nerman1981convergence,jagers-nerman-1}, where the limit was shown to be distributed as the same CBP but stopped at an independent exponential time. The directed preferential attachment model DPA($d,\beta$) with fixed out-degrees $d$ and fixed additive fitness $\beta$ (CBP with all out-degrees $d$ and attachment function $f(k) = k + \beta/d$) was analyzed in \cite{berger2014asymptotic} using an encoding of the graph in terms of a P\'olya urn type scheme (see \cite[Theorem 2.1]{berger2014asymptotic}) to describe the local limit as the so-called \emph{P\'olya-point graph} \cite[Section 2.3.2]{berger2014asymptotic}. The DPA($1,\beta$) with random $\beta$ was studied in \cite{lo2021weak}. Local limits for dynamic random trees where the attachment probabilities are non-local functionals of the vertex (like its PageRank) were recently obtained in \cite{banerjee2022co}. The local weak limit of preferential attachment type models with random i.i.d.~out-degrees and fixed additive fitness was derived in \cite{garavaglia2022universality} using a generalization of the P\'olya-point graph.  We remark here that the P\'olya representation of the pre-limit graph process, which forms the starting point of the results in \cite{berger2014asymptotic,lo2021weak,garavaglia2022universality}, is intrinsic to a linear attachment function and does not extend to more general attachment schemes.

Another related model was studied in \cite{diemor} where, for every new vertex $u$, an edge $(u,v)$ is created with probability proportional to f(in-degree of $v$)/[current network size] independently for each existing vertex $v$ in the network. Among other things, the authors in \cite{diemor} obtained a local limit (jointly for the in- and out-components) for this network. Although this model incorporates general attachment functions and the dynamics lead to random out-degrees of vertices, the attachment probabilities have a \emph{non-random denominator} (which is also true for DPA($d,\beta$)). This provides a crucial technical simplification that does not apply to the CBP case. Moreover, the out-degree distribution is rigidly tied to the network dynamics and one cannot construct this network with a prescribed out-degree distribution.

The main contribution of the current article is the construction of an explicit coupling between the exploration of the in-component of a uniformly chosen vertex in the CBP graph (with general $f$ satisfying mild assumptions) and its local weak limit, described by a marked continuous-time branching process (CTBP) stopped at an independent exponential time. The CTBP appearing in the limit falls into the broad category of Crump-Mode-Jagers (CMJ) branching processes (see \cite{nerman1981convergence,jagers-nerman-1,rudas-2}) where individuals reproduce independently according to a point process obtained by superimposing a random number (distributed as the out-degree) of independent copies of a Markovian pure birth process $\xi_f$ (see \eqref{suppp}). The rate of the exponential time at which the CTBP is stopped equals the \emph{Malthusian rate} of the CMJ process driven by $\xi_f$ (see Assumption \ref{A.MainAssum}(i) below). Our coupling is well-defined for the simultaneous exploration of the in-components of \emph{all the vertices} in the graph. In particular, the in-components of any finite collection of uniformly chosen vertices are successfully coupled with high probability with i.i.d. copies of the local weak limit. This is reminiscent of the phenomenon of \emph{propagation of chaos} in interacting particle systems \cite{chaintron2022propagation}. This type of coupling, which could include in applications the addition of vertex attributes that may depend on the out-degrees, has been established for static graphs in \cite{olvera2022strong}, where it is referred to as a {\em strong coupling}. Our results for the strong coupling of the CBP graph are summarized in Theorem~\ref{T.Main} and yield local weak convergence in probability as a corollary (Corollary~\ref{wconp}). Note that our results establish, as special cases, the local weak convergence of (the in-components in) the DPA($d,\beta$) graph and the CBP tree, previously obtained respectively in \cite{berger2014asymptotic} and \cite{rudas-2}. The local weak convergence implies the joint distributional convergence of the empirical in-degree and PageRank distributions  (Corollary \ref{degPR}). Asymptotics of the in-degree distribution for regularly varying out-degrees are quantified for the preferential and uniform attachment models in Proposition \ref{lincase}. 

In Proposition \ref{P.logasymptotics}, we obtain upper and lower bounds for the in-degree distribution for more general attachment functions in terms of functionals $G_k(\ell) := \sum_{j=1}^{\ell}\frac{1}{f(j)^k}, \, k,\ell \in \mathbb{N}$. These bounds are essentially sharp when $G_2(\infty) < \infty$. When $G_2(\infty)=\infty$, the bounds imply a \emph{strictly heavier tail} for the in-degree distribution, thereby exhibiting a \emph{phase transition}. A similar phenomenon was observed previously in the context of `persistence' in \cite{persist2021} and it signifies a transition from `strong correlation' to `weak correlation' between the age and in-degree of vertices in the growing network process. This phase transition is also captured in the fluctuation asymptotics of the point process $\xi_f$ driving the local limit, leading to a `central limit theorem' in Proposition \ref{L.WeakConv}.

There are several motivating factors behind this project. Firstly, as indicated by Propositions \ref{P.logasymptotics} and \ref{L.WeakConv}, the local limits for CBP `interpolate' between the uniform and linear attachment mechanisms, which exhibit very different qualitative and quantitative behavior, and capture precisely the sensitivity of the network geometry to the attachment mechanism (and associated phase transitions). Secondly, as the proofs of our results show, the developed technique does not rely on the explicit form of the attachment function (unlike \cite{berger2014asymptotic,lo2021weak,garavaglia2022universality}) and is therefore applicable to a wider variety of dynamic random networks. For example, consider the \emph{non-linear preferential attachment model} where every new vertex attaches to the network via \emph{more than one outgoing edge} (non-tree case), each independently choosing a parent vertex with probability proportional to a non-linear function of its current degree. Although this model does not fall under the CBP class, it admits a different (albeit more complicated) collapsing procedure (see \cite[Lemma 3.3]{banerjeedegcent}). We hope that the general approach laid out here will extend to such networks.

Thirdly, our description of the limit, beyond being applicable for a much wider class of attachment functions, is analytically better suited for obtaining refined asymptotics of a host of network functionals like degree and PageRank, in comparison to the P\'olya-point graph appearing in \cite{berger2014asymptotic,garavaglia2022universality}. This is because the local limit is a true (randomly stopped) CTBP where each vertex reproduces according to a superimposition of i.i.d.~Markovian pure birth processes, as opposed to Poisson processes with random intensities, that vary across vertices in an involved manner, which appear in the description of the P\'olya-point graph.
As a consequence, one can apply a host of tools from the well-studied theory of continuous-time Markov chains and CMJ processes (see the discussion after Corollary~\ref{degPR}). This was already exhibited in \cite{banerjee2022pagerank} where the P\'olya-point graph was redescribed as a randomly stopped CMJ process (our local limit for fixed out-degree and linear $f$) to compute the tail exponent of the limiting PageRank distribution in the DPA($d,\beta$) model and shed light on the \emph{power-law hypothesis} for dynamic random graphs. Similar descriptions of the local weak limit for other dynamic network models were recently exploited in \cite{banerjee2022co} to exhibit phase transitions in degree and PageRank behavior. Lastly, we point out that our only requirement for the out-degree distribution is a finite first moment compared to higher moments required by \cite{garavaglia2022universality}.

We note that, stemming from our interest in the asymptotic behavior of the degree and PageRank distributions, our couplings and local limits apply to the in-components only, rather than to the joint in- and out-components described in \cite{berger2014asymptotic,garavaglia2022universality}. However, as observed in the tree case in \cite{aldous1991asymptotic,rudas-2}, one can often obtain the joint local limit from that of the in-component using ideas from `fringe distributions' defined in \cite{aldous1991asymptotic}. More precisely, if the limiting in-component distribution satisfies certain `fringe' properties (expected root degree equal to one in the tree case \cite[Proposition 3]{aldous1991asymptotic}), then there is a unique infinite random tree, constructed in terms of `fringe trees' attached to an `infinite spine', which gives the joint local limit of the in- and out-components. We believe that a generalization of this approach will furnish the joint local limit for the CBP, and the limiting infinite tree can be described in terms of a forest of independent CMJ processes (run until different random times) emanating from vertices of the limiting out-component, which corresponds to a certain Galton-Watson tree. We leave this analysis for a later work.

In ongoing and future work, we exploit our description of the local limit for the CBP to obtain more refined asymptotics for centrality measures like degree, PageRank and betweenness centrality, and exhibit several interesting phase transitions. We are also working towards establishing a general technique for obtaining joint local limits of in- and out-components from that of the in-component for non-tree network models.

The rest of the paper is organized as follows. Section~\ref{S.Construction} provides a detailed construction of the CBP; Section~\ref{S.LocalLimit} contains our main theorem establishing the strong coupling, and Section~\ref{S.Properties} describes some applications of the local limit to the analysis of the PageRank and in-degree distributions. Finally, Section~\ref{S.Proofs} contains all the proofs, with the description of the coupling in subsections \ref{SS.Coupling} and \ref{SS.Coupling1}.

\section{The collapsed branching process} \label{S.Construction}

To construct a collapsed branching process (CBP) with random out-degrees we start by defining a continuous time branching process (CTBP) $\boldsymbol{\xi}$ driven by a Markovian pure birth process $\{ \xi_f(t) : t \geq 0\}$ satisfying $\xi_f(0) = 0$ and having birth rates
$$P( \xi_f(t+dt) = k+1 | \xi_f(t) = k) = f(k+1) dt + o(dt),$$
where $f: \mathbb{N} \to \mathbb{R}_+$ is a nonnegative function. Number each of the nodes in this CTBP according to the order of their arrival, with the root being labeled node 1. Let $\sigma_i$ denote the time of arrival of node $i$ in the CTBP and let $\mathcal{T}(t)$ denote the discrete skeleton of the graph determined by $\boldsymbol{\xi}$ at time $t$, with directed edges pointing from an offspring to its parent. 

Independently of the CTBP $\boldsymbol{\xi}$, we will also construct a sequence of i.i.d.~random variables $\{D_i^+: i \geq 1\}$ taking values on $\mathbb{N} :=\{1, 2, \dots \}$ and having distribution function $H(x) = P( D_1^+ \leq x)$ with $\mu := E[D_1^+] < \infty$. We will use this sequence $\mathbf{D}_n = \{ D_i^+: 1 \leq i \leq n\}$ in combination with $\boldsymbol{\xi}$ to construct a vertex-weighted directed graph $G(V_n, E_n)$ with vertex set $V_n := \{1, 2, \dots, n\}$.  
Let $S_k = D_1^+ + \dots + D_k^+$, $S_0 = 0$.

To start, define the sets
$$V(i) = \{ S_{i-1}+1, S_{i-1} + 2, \dots, S_i\}, \qquad i \geq 1.$$
The directed graph $G(V_n, E_n)$ is obtained by collapsing all the nodes in $V(i)$ into the vertex $i$, matching the outbound edges of its $D_i^+$ nodes with the merged vertices their parents in $\mathcal{T}(\sigma_{S_n})$ belong to. Note that $G(V_n, E_n)$ is a multigraph, since it may contain self-loops and multiple edges in the same direction between the same two vertices. Figure~\ref{F.Collapse} illustrates the collapsing procedure and the resulting multigraph.

\begin{figure}[t] 
\begin{center}
\includegraphics[scale=0.9, bb = 70 570 550 760, clip]{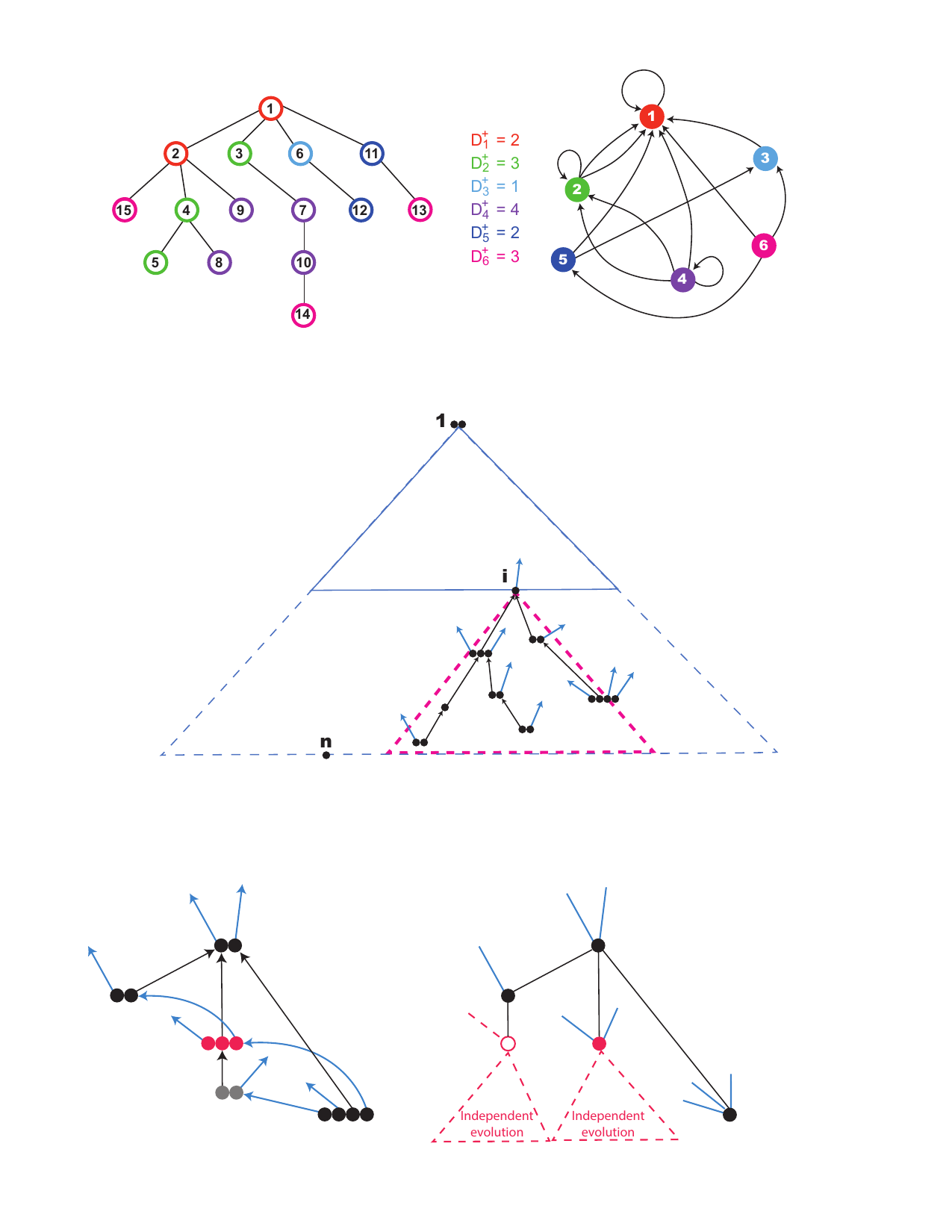}
\caption{Collapsed branching process. On the left the tree $\mathcal{T}(\sigma_{S_6})$, on the right the corresponding graph $G(V_6, E_6)$.} \label{F.Collapse}
\end{center}
\end{figure}

Note that if $f$ is linear, i.e., $f(k) = ck + \beta$ for some constants $c,\beta$ satisfying $c+\beta > 0$, then $G(V_n, E_n)$ can be seen as an evolving random directed rooted graph sequence $\{ G_\ell \}_{\ell \geq 1}$ where $G_1$ contains one vertex having $D_1^+-1$ self-loops and one extra half-edge pointing away from this vertex, and for $\ell \geq 2$, $G_{\ell}$ is constructed from $G_{\ell-1}$ by adding one vertex, labeled $\ell$, having $D_\ell^+$ outbound edges that are connected, one at a time, with the $k$th edge, $1 \leq k \leq D_{\ell}^+$, choosing to connect to vertex $i$ with probability:
\begin{align}\label{upa}
&P\left( \left. \text{$k^{th}$ outbound edge of $\ell$ attaches to $i$} \right| G_{\ell-1,k-1}, D_\ell^+ \right) \notag\\
&\qquad = 
\begin{cases}
\frac{cD_i(\ell-1,k-1)+ \beta D_{i}^+}{\sum_{j=1}^{\ell-1} (cD_j(\ell-1,k-1) + \beta D_j^+) + cD_\ell(\ell-1,k-1)+ \beta (k-1)},
 &   1 \le i \le \ell-1,\\\\
\frac{cD_\ell(\ell-1,k-1)+ \beta (k-1)}{\sum_{j=1}^{\ell-1} (cD_j(\ell-1,k-1) + \beta D_j^+) + cD_\ell(\ell-1,k-1)+ \beta (k-1)},
  & i = \ell,
\end{cases}
\end{align}
where $G_{\ell-1,k-1}$ is the graph after $k-1$ edges of vertex $\ell$ have been attached and $D_i(\ell-1,k-1)$ is the total degree of vertex $i$ (i.e. in-degree plus out-degree) in this graph. The case $c =1$ corresponds to a directed preferential attachment graph, while the case $c = 0$ corresponds to the directed uniform attachment graph, both with random out-degrees distributed according to $H$ and \emph{random additive fitness} (coming from the $\beta D^+_i$ term). Note that the name `uniform attachment graph' is a slight misnomer as, after the collapsing, the probability of a younger vertex connecting to an older one is not the same for every older vertex (rather it is proportional to the latter's out-degree). However, we stick to this nomenclature to emphasize that the branching process, before collapsing, indeed corresponds to a uniform attachment progeny graph.

\begin{remark}
To avoid confusion, we will always refer to the vertices in $G(V_n, E_n)$ as ``vertex/vertices", while we will refer to individuals in the CTBP $\boldsymbol{\xi}$, or its discrete skeleton $\{ \mathcal{T}(t): t \geq 0\}$, as ``node/nodes". 
\end{remark}

\section{Coupling with a marked continuous time branching process} \label{S.LocalLimit}

The existence of a local weak limit for $G(V_n, E_n)$ requires that we impose some conditions on the function $f$ that drives the CTBP $\boldsymbol{\xi}$.

To start, define 
\begin{align*}
\hat \rho(\theta) &:= E\left[ \int_0^\infty e^{-\theta s} \xi_f(ds) \right] = E\left[ \sum_{n=1}^\infty e^{-\theta \tau_n }  \right] = E\left[  \sum_{n=1}^\infty e^{-\theta \sum_{i=1}^{n} \chi_{i}/f(i)}  \right] \\
&= \sum_{n=1}^\infty \prod_{i=1}^n E\left[   e^{-\theta  \chi_{i}/ f(i)} \right]  =   \sum_{n=1}^\infty \prod_{i=1}^n \frac{1}{\theta/ f(i) + 1} = \sum_{n=1}^\infty \prod_{i=1}^{n} \frac{f(i)}{\theta + f(i)} ,
\end{align*}
where $\{ \chi_i: i \geq 1\}$ is a sequence of i.i.d.~exponential random variables with rate one, and $\tau_n$ is the time of the $n$th birth of $\{ \xi_f(t): t \geq 0\}$. 

\begin{assumption} \label{A.MainAssum}
Suppose the out-degrees $\{ D_i^+: i \geq 1\}$ are i.i.d., with $\mu = E[D_1^+] < \infty$. In addition, suppose the following hold:
\begin{enumerate}
\item[(i)] There exists $\lambda > 0$ such that $\hat \rho(\lambda) = 1$.
\item[(ii)] $f(k) \leq C_f k$, $k \geq 1$ for some constant $C_f < \infty$.
\item[(iii)] $f_* := \inf_{i \geq 1} f(i) > 0$.
\item[(iv)] Let $\underline \theta := \inf\{ \theta > 0: \hat \rho(\theta) < \infty\}$ and suppose that 
$$\lim_{\theta \searrow \underline{\theta}} \hat \rho(\theta) > 1.$$
\end{enumerate}
\end{assumption}

Note that $\lambda > 0$ in Assumption~\ref{A.MainAssum} is the Malthusian rate of $\{ \xi_f(t): t \geq 0\}$. 

The local weak limit of $G(V_n, E_n)$ is given by a marked continuous time branching process, whose discrete marked skeleton (the graph obtained by removing time labels from the nodes but retaining their marks) at time $t$ will be denoted $\mathcal{T}^c(t, \boldsymbol{\mathcal{D}})$, where $\boldsymbol{\mathcal{D}} :=\{\mathcal{D}_k : k \geq 1 \}$ is an i.i.d.~sequence having distribution $H$. The marks $\boldsymbol{\mathcal{D}}$ play a role in the construction of the local weak limit, and become vertex marks in its discrete skeleton. To describe this CTBP, define for each $k \ge 1$
\begin{equation}\label{suppp}
\bar{\xi}_f^{(k)} = \sum_{i=1}^{\mathcal{D}_k} \xi_f^{k,i},
\end{equation}
where $\mathcal{D}_k$ is the $k$th element of $\boldsymbol{\mathcal{D}}$ and the $\{ \xi_f^{k,i}: i\geq 1, k \geq 1\}$ are i.i.d.~copies of $\xi_f$. Each node in the tree is indexed in the order in which it arrives, and node $k$ has as its mark $\mathcal{D}_k$.
Let $\mathcal{T}^c(t,\boldsymbol{\mathcal{D}})$ denote the discrete marked skeleton at time $t$ of a marked CTBP driven by $\{ (\mathcal{D}_k, \bar \xi_f^{(k)}): k \geq 1\}$, conditionally on the root being born at time $t=0$.  We will denote the corresponding unmarked discrete skeleton by $\mathcal{T}^c(t)$.

Throughout the paper we will use $\mathcal{G}_i^{(n)}$ to denote the subgraph of $G(V_n, E_n)$ rooted at vertex $i$ that corresponds with the exploration of its in-component, where the exploration is such that we only follow the inbound edges, but also keep track of the out-degrees $\{D_j^+: j \in \mathcal{G}_i^{(n)} \}$ as we go; however, we do not follow the outbound edges. As before, $\mathcal{G}_i^{(n)}$ denotes the unmarked graph, while $\mathcal{G}_i^{(n)}(\mathbf{D}^+)$ denotes the marked one. With some abuse of notation, we will write $j \in \mathcal{G}_i^{(n)}$ to refer to a vertex in $\mathcal{G}_i^{(n)}$. 

\begin{definition} \label{D.MarkIsomorphic}
We say that two multigraphs $G(V, E)$ and $G(V', E')$ are {\em isomorphic} if there exists a bijection $\theta: V \to V'$ such that $l(i) = l(\theta(i))$ and $e(i,j) = e(\theta(i), \theta(j))$ for all $i \in V$ and all $(i,j) \in E$, where $l(i)$ is the number of self-loops of vertex $i$ and $e(i,j)$ is the number of edges from vertex $i$ to vertex $j$. In this case, we write $G \simeq G'$. 
\end{definition}

For nodes in trees, we use the symbol $\emptyset$ to denote the root and enumerate its vertices with labels of the form $\mathbf{i} = (i_1, \dots, i_k) \in \mathbb{N}^k$, where $(i_1, \dots, i_k, i_{k+1})$ is the $i_{k+1}$th offspring of node $(i_1, \dots, i_k)$; nodes with labels $i \in \mathbb{N}$ are offspring of the root $\emptyset$. Define the Ulam-Harris set $\mathcal{U} = \bigcup_{k=0}^\infty \mathbb{N}^k$, with the convention that $\mathbb{N}^0 \equiv \{ \emptyset \}$.  With a slight abuse of notation, we will write $\mathcal{D}_{\mathbf{j}}$ to refer to the mark of a node labeled $\mathbf{j} \in \mathcal{U}$. As mentioned earlier, we will also index nodes in dynamic trees in the order in which they arrive. In this case, for $j \in \mathbb{N}$, $j \in \mathcal{T}$ will denote the $j$th node to arrive in the tree $\mathcal{T}$.

Now we state the main result in the paper. Recall that $\lambda$ denotes the Malthusian rate of $\{\xi_f(t):t\geq0\}$ defined in Assumption \ref{A.MainAssum} (i).

\begin{theorem} \label{T.Main}
Suppose Assumption~\ref{A.MainAssum} holds. Then, for the CBP $G(V_n, E_n)$ we have:
\begin{enumerate} 
\item[i)] For $n \in \mathbb{N}$, if $I_n$ is uniformly chosen in $V_n$, independently of anything else, then, there exists a coupling $\cP_n$ of $\mathcal{G}_{I_n}^{(n)}(\mathbf{D}^+)$ and $(\chi, \{\mathcal{T}^c(t,\boldsymbol{\mathcal{D}}) : t \ge 0\})$, where $\boldsymbol{\mathcal{D}} :=\{\mathcal{D}_k : k \geq 1 \}$ is an i.i.d.~sequence having distribution $H$ and $\chi$ is an exponential random variable with rate $\lambda$, independent of $\{ \mathcal{T}^c(t,\boldsymbol{\mathcal{D}}): t \geq 0\}$, such that the event
$$C_{I_n} = \left\{  \mathcal{G}_{I_n}^{(n)} \simeq \mathcal{T}^c(\chi), \, \bigcap_{\mathbf{j} \in \mathcal{T}^c(\chi) } \{ D^+_{\theta(\mathbf{j})} = \mathcal{D}_{\mathbf{j}} \}  \right\},$$
where $\theta: \mathcal{U} \to \mathbb{N}$ is the bijection defining $\mathcal{G}_{I_n}^{(n)} \simeq \mathcal{T}^c(\chi)$, satisfies
$$\cP_n \left(C_{I_n} \right) \rightarrow 1, \quad \text{ as } \quad n \to \infty.$$

\item[ii)] Fix $m \in \mathbb{N}$. For $n \in \mathbb{N}$ and $\{ I_{n,j}: 1 \leq j \leq m\}$ i.i.d.~and uniformly chosen in $V_n$, independently of anything else, there exists a coupling $\cP_{n,m}$ of $\left(\mathcal{G}_{I_{n,j}}^{(n)} (\mathbf{D}^+) \right)_{1 \le j \le m}$ and i.i.d.~copies of $(\chi, \{\mathcal{T}^c(t,\boldsymbol{\mathcal{D}}) : t \ge 0\})$, denoted $(\chi_j, \{\mathcal{T}^c_j(t,\boldsymbol{\mathcal{D}}) : t \ge 0\})_{1 \le j \le m}$, such that the events $C_{I_{n,j}}$ defined as in part (i) satisfy
$$\cP_{n,m}\left( \bigcap_{j=1}^m C_{I_{n,j}} \right) \rightarrow 1, \quad \text{ as } \quad n \to \infty.$$

\end{enumerate}
\end{theorem}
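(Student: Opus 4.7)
The plan is to exploit the classical time-reversal picture for supercritical CMJ processes: a uniformly chosen individual late in the lifetime of a CMJ process has a descendant subtree that, after time reversal, is distributed as the same process stopped at an independent exponential time of rate equal to the Malthusian parameter. I would carry this picture through the extra collapsing operation that turns $\mathcal{T}(\sigma_{S_n})$ into the multigraph $G(V_n,E_n)$, and track the out-degree marks. Because edges point from younger to older families, the in-component of vertex $I_n$ in $G(V_n,E_n)$ coincides, as a marked multigraph, with the collapse under $\{V(j)\}_{j=1}^{n}$ of the union of the $D^+_{I_n}$ subtrees of $\mathcal{T}(\sigma_{S_n})$ rooted at the nodes of $V(I_n)$. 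This reduces part (i) to identifying such a randomly-rooted collapsed subtree with $\mathcal{T}^c(\chi,\boldsymbol{\mathcal{D}})$.

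First I would prove that the backward age $\chi_n := \sigma_{S_n}-\sigma_{S_{I_n-1}+1}$ converges weakly to $\chi\sim\mathrm{Exp}(\lambda)$. Assumption~\ref{A.MainAssum}(i) and (iv) ensure that $\lambda$ is the Malthusian parameter of $\xi_f$ lying in the interior of $\{\theta:\hat\rho(\theta)<\infty\}$, so by the Nerman--Jagers theorems the population $|\mathcal{T}(t)|$ grows like $W e^{\lambda t}$ a.s.\ for a nondegenerate $W>0$, and the birth-time density of a uniform tree-node at time $t$ is asymptotically $\lambda e^{-\lambda(t-s)}\,ds$. Independence of $\{D_i^+\}$ from $\boldsymbol{\xi}$ together with $S_n/n\to\mu$ then transfer this limit from a uniform node to the first node of a uniform family. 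Conditional on $\chi_n$, the $D^+_{I_n}$ subtrees rooted at $V(I_n)$ are, up to the asymptotically negligible intra-family birth-time spread, $D^+_{I_n}$ independent copies of $\xi_f$ grown for time $\chi_n$ superposed at the root; this is exactly the reproduction law $\bar\xi_f^{(k)}$ in \eqref{suppp} with $\mathcal{D}_k = D^+_{I_n}$. Recursing at each descendant vertex $j$, its $D_j^+$ is fresh and $H$-distributed by the i.i.d.\ structure of $\mathbf{D}^+$ and the fact that $j$ has not been seen before on the good event, while the CTBP subtrees attached at $j$ are conditionally independent copies of $\xi_f$ by the branching property.

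The coupling $\cP_n$ is then built by generating $\boldsymbol{\xi}$ and $\mathbf{D}^+$ once, choosing $I_n$, and using common driving uniforms for $\chi_n$ and $\chi$ together with the recursive identification above to produce the bijection $\theta$; the mark equality $D^+_{\theta(\mathbf{j})}=\mathcal{D}_{\mathbf{j}}$ then holds by construction on the good event. Part (ii) follows by applying the same procedure simultaneously to $m$ independent uniforms $\{I_{n,j}\}_{1\le j\le m}$. Since each in-component has $O_P(1)$ size while $n\to\infty$, with probability tending to $1$ the $m$ explored subtrees are pairwise vertex-disjoint, the driving Poisson clocks in $\boldsymbol{\xi}$ are conditionally independent across the disjoint portions, and the associated out-degrees $D^+_j$ are freshly sampled, producing a joint coupling with $m$ i.i.d.\ copies of the limit.

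The main obstacle is controlling the \emph{bad events} that can break the coupling: (a) two explored nodes lying in the same collapsed family, producing a self-loop or multi-edge absent from the tree limit; (b) the exploration reaching the earliest generations of $\boldsymbol{\xi}$, where the Malthusian description breaks down; and, for part (ii), (c) two of the $m$ explorations intersecting. Controlling (a) and (c) requires tight size estimates on $\mathcal{T}^c(\chi,\boldsymbol{\mathcal{D}})$; Assumption~\ref{A.MainAssum}(ii), $f(k)\le C_f k$, prevents the CTBP from exploding before the killing time $\chi$, which combined with $E[D^+_1]<\infty$ ensures the limit has finite expected size, so each overlap has probability $O(1/n)$. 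Item (b) is handled by Assumption~\ref{A.MainAssum}(iv), which gives tight control on the generation distribution of a uniform node and places it in late generations of $\boldsymbol{\xi}$ with probability tending to $1$, while Assumption~\ref{A.MainAssum}(iii), $f_*>0$, prevents pathologies in the early birth times. Assembling these estimates via a union bound yields $\cP_n(C_{I_n})\to 1$ and $\cP_{n,m}(\bigcap_j C_{I_{n,j}})\to 1$.
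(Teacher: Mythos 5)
Your plan follows the same broad strategy as the paper — couple the in-component to a randomly-stopped CTBP via the Nerman/Malthusian asymptotics, with $\chi$ generated from the same uniform used to pick $I_n$, and control self-loop/multi-edge/overlap failures via size estimates on the limit tree. However, there is a genuine quantitative error in the heart of your bad-event analysis.

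You claim that Assumption~\ref{A.MainAssum}(ii) together with $E[D_1^+]<\infty$ ``ensures the limit has finite expected size,'' and deduce an $O(1/n)$ overlap probability. This is false in general. The stochastic domination by a Yule-type process gives $E[\Lambda(t)]\le 1+\mu e^{C_f(\mu+1)t}$ where $\Lambda(t)$ is the size-plus-marks of $\mathcal{T}^c(t)$, and $\chi\sim\mathrm{Exp}(\lambda)$, so $E[\Lambda(\chi)]<\infty$ only if $C_f(\mu+1)<\lambda$. For $f(k)=k$ the Malthusian rate is $\lambda=1$ while $C_f=1$, so $C_f(\mu+1)=\mu+1>1$ whenever $\mu>0$ and $E[\Lambda(\chi)]=\infty$. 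More fundamentally, the failure probability near vertex $i$ scales like $(\text{tree size})^2/i$, and the conditional expected tree size grows like $(n/i)^{\vartheta}$ with $\vartheta=C_f(\mu+1)/\lambda$, which is large for small $i$; a blanket $O(1/n)$ bound is simply wrong. The fix (as the paper implements) is a truncation: choose $a_i=i^{1/2-\delta}$, discard $i<n^{c_\vartheta}$ for a suitable $c_\vartheta\in(\vartheta/(\vartheta+1/2-\delta),1)$, and use Markov's inequality on $\{\Lambda_i(t_{n,i})>a_i\}$; the miscoupling probability on the surviving good event is then $\lesssim a_i^2/i=i^{-2\delta}$, which vanishes under the Cesàro average over uniform $I_n$. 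Your argument as stated does not reach this and would give a bound that blows up for small $i$.

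A second, related gap is that you never actually exhibit the coupling; ``common driving uniforms together with the recursive identification'' is a promise, not a construction, and the delicate part is precisely how to keep the candidate tree $\mathcal{T}_i^c(\cdot)$ distributed as $\mathcal{T}^c(\cdot)$ \emph{even after} a miscoupling occurs. The paper handles this by copying birth times $\tau_{i,j}=s_i^*+\sigma_\omega-\sigma_{S_{\kappa^*}}$ from the pre-limit graph, inserting ``dummy nodes'' with independently drawn marks for the extra edges created by multi-edges, and attaching fresh independent copies of $\mathcal{T}^c(\cdot)$ at miscoupled and dummy nodes. Without this bookkeeping one cannot assert $\mathcal{T}_i^c(\chi)\stackrel{d}{=}\mathcal{T}^c(\chi)$, which is needed to even state the coupling $\cP_n$. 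The same mechanism extends to part (ii) by accumulating previously explored vertices into $S_{\ui}$ and declaring miscoupling on intersection, which you gesture at but do not set up.
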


Theorem~\ref{T.Main} implies, in particular, the local weak convergence in probability of $G(V_n, E_n)$, recorded in the following corollary.

\begin{corollary} \label{wconp}
Suppose Assumption~\ref{A.MainAssum} holds. For any fixed finite tree $T$ and any deterministic sequence $\{d_\jb : \jb \in \cU \} \subseteq \mathbb{N}$, 
$$\frac{1}{n} \sum_{i=1}^{n} 1 \left( \cG^{(n)}_{i} \simeq T, \,  \bigcap_{\jb \in T} \{ D^+_{\theta_i(\jb)} = d_{\jb} \}  \right) \overset{P}{\rightarrow} P \left( \cT^c(\chi) \simeq T, \, \bigcap_{\jb \in T} \{\mathcal{D}_{\jb} = d_{\jb} \} \right),$$
where $\theta_i: \mathcal{U} \to \mathbb{N}$ is the bijection defining $\mathcal{G}_i^{(n)} \simeq T$ and $\mathcal{T}^c(\chi,\boldsymbol{\mathcal{D}})$ is the marked discrete skeleton of the (randomly stopped) CTBP from the theorem.  
\end{corollary}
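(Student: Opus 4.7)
The strategy is a standard second-moment argument built directly on Theorem~\ref{T.Main}: part (i) pins down the mean of the empirical fraction on the left, while part (ii) with $m=2$ pins down its second moment. Set
$$Y_i^{(n)} := 1\!\left( \cG^{(n)}_{i} \simeq T, \, \bigcap_{\jb \in T} \{ D^+_{\theta_i(\jb)} = d_{\jb}\} \right), \qquad Z_n := \frac{1}{n}\sum_{i=1}^n Y_i^{(n)},$$
and write $p := P(\cT^c(\chi) \simeq T, \bigcap_{\jb \in T}\{\mathcal{D}_{\jb} = d_{\jb}\})$ for the target quantity. The goal is to show $E[Z_n] \to p$ and $\Var(Z_n) \to 0$, whence Chebyshev's inequality yields $Z_n \overset{P}{\rightarrow} p$.

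\textbf{First moment.} Let $I_n$ be uniformly chosen in $V_n$, independent of the graph and its marks. Independence and uniformity give $E[Z_n] = P(Y_{I_n}^{(n)} = 1)$. Under the coupling $\cP_n$ of Theorem~\ref{T.Main}(i), the event $C_{I_n}$ identifies the marked in-component $\cG^{(n)}_{I_n}(\mathbf{D}^+)$ with $\cT^c(\chi, \boldsymbol{\mathcal{D}})$, so on $C_{I_n}$ the indicator $Y_{I_n}^{(n)}$ equals $1(\cT^c(\chi) \simeq T, \bigcap_{\jb}\{\mathcal{D}_{\jb} = d_{\jb}\})$. Splitting on $C_{I_n}$ and $C_{I_n}^c$ therefore yields $|E[Z_n] - p| \le \cP_n(C_{I_n}^c) \to 0$.

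\textbf{Second moment.} Let $I_{n,1}, I_{n,2}$ be i.i.d.~uniform on $V_n$, independent of everything else. The same independence argument yields
$$E[Z_n^2] = \frac{1}{n^2}\sum_{i,j=1}^n E[Y_i^{(n)} Y_j^{(n)}] = P(Y_{I_{n,1}}^{(n)} = 1,\ Y_{I_{n,2}}^{(n)} = 1).$$
Applying Theorem~\ref{T.Main}(ii) with $m=2$, on $C_{I_{n,1}} \cap C_{I_{n,2}}$ each $Y_{I_{n,k}}^{(n)}$ matches the indicator of the event $\{\cT^c_k(\chi_k) \simeq T, \bigcap_{\jb}\{\mathcal{D}^{(k)}_{\jb} = d_{\jb}\}\}$ built from the $k$th i.i.d.~limiting copy. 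Since those copies are independent, the same splitting argument gives $|E[Z_n^2] - p^2| \le \cP_{n,2}(C_{I_{n,1}}^c \cup C_{I_{n,2}}^c) \to 0$.

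\textbf{Conclusion and main obstacle.} Combining, $\Var(Z_n) = E[Z_n^2] - (E[Z_n])^2 \to 0$, and Chebyshev's inequality finishes the proof. The only nontrivial ingredient is the propagation-of-chaos statement of Theorem~\ref{T.Main}(ii): it is exactly what forces the off-diagonal terms in $E[Z_n^2]$ to factorise asymptotically into $p \cdot p$, despite the fact that the in-components of two i.i.d.~uniformly sampled vertices can a priori overlap inside $G(V_n, E_n)$. Once that joint coupling is in hand, the remainder of the argument is essentially formal, so there is no real obstacle at the level of the corollary itself.
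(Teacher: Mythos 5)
Your proposal is correct and follows essentially the same route as the paper: the paper also proves the corollary via a second-moment argument, using the coupling of Theorem~\ref{T.Main}(i) to control the mean and the $m=2$ coupling of Theorem~\ref{T.Main}(ii), together with the independence of the two limiting copies, to make the cross terms factorize; it phrases this as $L^2$ convergence rather than variance plus Chebyshev, which is only a cosmetic difference.
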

In the sequel, we will suppress the dependence of the marked discrete skeleton  $\mathcal{T}^c(\cdot,\boldsymbol{\mathcal{D}})$ on the marks $\boldsymbol{\mathcal{D}}$, and simply write $\mathcal{T}^c(\cdot)$, when there is no risk of ambiguity.

\section{Applications of the local limit} \label{S.Properties}

In this section we give some basic applications of the local limit $\mathcal{T}^c(\chi)$ for understanding asymptotic properties of the distributions of the  in-degree and the PageRank of a typical vertex in the original CBP. We will assume throughout that Assumption~\ref{A.MainAssum} holds.

PageRank, introduced by Brin and Page \cite{page1999pagerank}, is a celebrated centrality measure on networks whose goal is to 
rank vertices in a graph  according to their `popularity'.  Specifically, the PageRank score of vertex $v$ corresponds to the long-run proportion of time that a certain random walk spends on vertex $v$, hence the `popularity' interpretation. The PageRank  of a vertex is known to be heavily influenced by its in-degree and by the PageRank scores of its close inbound neighbors \cite{olvera2019pagerank}. Formally, PageRank is defined as follows. 

Let $G=G(V,E)$ be a directed network with vertices $V$ and edge set $E$. For each vertex $v \in V$, let $d_v^-$ and $d_v^+$ denote its in-degree and out-degree, respectively. Writing $|V|$ for the number of vertices in the graph and the vertices as $\{1,\dots,|V|\}$, let $A$ denote the adjacency matrix of $G$ defined as the $|V| \times |V|$ matrix whose $(i,j)$th element is the number of directed edges from vertex $i$ to vertex $j$. Let $\Delta$ be the diagonal matrix whose $i$th element is $1/d_i^+$ if $d_i^+>0$ and $0$ if $d_i^+=0$. Let $P$ be the matrix product $\Delta A$ with the zero rows replaced with the probability vector ${\bf q} = |V|^{-1} \mathbf{1}$. Note that $P$ is a stochastic matrix (all rows sum to one). The PageRank vector $\boldsymbol{\pi} = (\pi_1, \dots, \pi_{|V|})$, with \emph{damping factor} $c \in (0,1)$, is defined as the solution to the following system of equations:
$$\boldsymbol{\pi} = \boldsymbol{\pi} (cP) + (1-c) \mathbf{q}.$$
As the matrix $cP$ is substochastic (its rows sum to $c$), the system of equations is guaranteed to have a unique solution given by:
$$\boldsymbol{\pi} = (1-c)\mathbf{q}(I - cP)^{-1} = (1-c) \mathbf{q} \sum_{k=0}^\infty (c P)^k.$$
We will consider the \emph{scale-free} PageRank $\boldsymbol{R} := |V|\boldsymbol{\pi}$. For $n \in \mathbb{N}$ and $1 \le i \le n$, let $D^-_i(n),R_i(n)$ denote the in-degree and (scale-free) PageRank of the $i$th vertex in the CBP $G(V_n, E_n)$. Let $\InDeg_\emptyset, \mathcal{R}_\emptyset$ be the in-degree and PageRank of the root in the coupled marked tree $\mathcal{T}^c(\chi)$ from Theorem \ref{T.Main}. Then the following holds as a corollary of Theorem \ref{T.Main}.

\begin{corollary} \label{degPR}
For any continuity point $r$ of the distribution function of $\mathcal{R}_\emptyset$ and $k \in \mathbb{N} \cup \{0\}$, we have
\begin{equation*}
\frac{1}{n}\sum_{i=1}^n1(D^-_i(n) \ge k, R_i(n) > r) \xrightarrow{P} P(\InDeg_\emptyset \ge k, \mathcal{R}_\emptyset > r)
\end{equation*}
as $n \to \infty$.
\end{corollary}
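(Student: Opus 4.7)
The two key observations are: (i) both $D_i^-(n)$ and $R_i(n)$ are functionals of the marked in-component $\mathcal{G}_i^{(n)}(\mathbf{D}^+)$, using that $D^+_i\geq 1$ makes the PageRank transition matrix $P$ stochastic with no zero rows; and (ii) truncating the PageRank at finite depth produces a bounded local functional to which Theorem \ref{T.Main} applies. Concretely, set
\begin{equation*}
R_i^{(K)}(n) := (1-c)\sum_{k=0}^{K-1} c^k (\mathbf{1}^\top P^k)_i,
\end{equation*}
which depends only on the depth-$(K{-}1)$ marked in-neighborhood of $i$, since $(P^k)_{ji}\neq 0$ forces $j$ to reach $i$ in $k$ outbound steps, i.e.\ $j$ lies in the depth-$k$ in-component of $i$. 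Define $\mathcal{R}_\emptyset^{(K)}$ analogously on $\mathcal{T}^c(\chi)$. Both $R_i^{(K)}\uparrow R_i$ and $\mathcal{R}_\emptyset^{(K)}\uparrow \mathcal{R}_\emptyset$ monotonically as $K\to\infty$.

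For each fixed $K,k,r$, the bounded marked-graph functional
\begin{equation*}
\phi_{K,k,r}(G,\mathbf{d}) := 1\bigl(d_{\text{root}}^-\geq k,\; R_{\text{root}}^{(K)}(G,\mathbf{d})> r\bigr)
\end{equation*}
depends only on a finite neighborhood of the root. By the strong coupling of Theorem \ref{T.Main}(i), the mean $E[\phi_{K,k,r}(\mathcal{G}_{I_n}^{(n)}(\mathbf{D}^+))]$ converges to $P(\InDeg_\emptyset\geq k,\,\mathcal{R}_\emptyset^{(K)}> r)$ at continuity points of the distribution of $\mathcal{R}_\emptyset^{(K)}$. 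For the empirical mean, a standard second-moment argument using Theorem \ref{T.Main}(ii) with $m=2$ to decouple two i.i.d.\ sampled in-components yields
\begin{equation*}
\frac{1}{n}\sum_{i=1}^n \phi_{K,k,r}(\mathcal{G}_i^{(n)}(\mathbf{D}^+)) \xrightarrow{P} P\bigl(\InDeg_\emptyset\geq k,\,\mathcal{R}_\emptyset^{(K)}> r\bigr).
\end{equation*}

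Next I would control the truncation error deterministically. Since $P$ is stochastic, $\sum_{i=1}^n(\mathbf{1}^\top P^k)_i = n$ for every $k$, so $\sum_{i=1}^n(R_i(n)-R_i^{(K)}(n))=nc^K$. As $R_i\geq R_i^{(K)}\geq 0$, Markov's inequality gives $n^{-1}\#\{i:R_i-R_i^{(K)}>\delta\}\leq c^K/\delta$ for every $\delta>0$. The pointwise sandwich
\begin{equation*}
1(R_i^{(K)}> r+\delta) \leq 1(R_i> r) \leq 1(R_i^{(K)}> r-\delta) + 1(R_i-R_i^{(K)}>\delta),
\end{equation*}
multiplied by $1(D_i^-(n)\geq k)$ and averaged, squeezes the empirical mean of the corollary between two truncated empirical means (which converge in probability by the previous step) plus an additive $c^K/\delta$ error. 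Letting $n\to\infty$, then $K\to\infty$ (using monotone convergence $\{\mathcal{R}_\emptyset^{(K)}>r\pm\delta\}\uparrow\{\mathcal{R}_\emptyset>r\pm\delta\}$ to lift the limit to the level of probabilities), then $\delta\to 0$ through continuity points of $\mathcal{R}_\emptyset$ (which form a dense set and are approached from both sides since $r$ is a continuity point) closes the argument.

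The main obstacle is precisely the PageRank truncation: $R_i$ is not a bounded local functional of $\mathcal{G}_i^{(n)}$, so Theorem \ref{T.Main} cannot be applied to it directly, and one must control the truncation error uniformly enough to interchange the limits $n\to\infty$ and $K\to\infty$. This is accomplished by the exact identity $\sum_i(R_i-R_i^{(K)})=nc^K$ together with the monotonicity $R_i^{(K)}\leq R_i$, which together yield the required deterministic Markov bound. Minor technical care is needed to choose the auxiliary truncation parameters $r\pm\delta$ in the continuity sets of the distributions of $\mathcal{R}_\emptyset^{(K)}$ for each $K$; this is handled by noting that the set of atoms of each such distribution is at most countable.
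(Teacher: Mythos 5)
Your argument is correct. The paper omits its own proof, deferring entirely to \cite[Theorem 4.6]{banerjee2022pagerank} and \cite[Theorem 2.1]{garavaglia2020local}, so there is no in-paper argument to compare against; but the approach you spell out — truncate PageRank at finite depth $K$ to obtain a bounded local functional, apply the strong coupling (Theorem~\ref{T.Main}(i) for the mean, Theorem~\ref{T.Main}(ii) with $m=2$ or equivalently Corollary~\ref{wconp} for the concentration), and control the truncation error via the deterministic identity $\sum_i(R_i-R_i^{(K)})=nc^K$ plus a Markov bound — is exactly the standard mechanism those references use to upgrade local weak convergence to convergence of the empirical PageRank distribution, and the joint statement with $D_i^-(n)\ge k$ rides along for free since the in-degree is itself a depth-one local functional. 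Two small points you handle correctly but should keep explicit when writing this up: $D_i^+\ge 1$ always (Assumption~\ref{A.MainAssum} has $D^+$ supported on $\mathbb N$), so $P$ is genuinely stochastic and the identity $\mathbf{1}^\top P^k\mathbf{1}=n$ holds with no correction for dangling nodes; and the sandwich must be stated with fixed $\delta$ and $K$, $n\to\infty$ first, then $K\to\infty$ at fixed $\delta$ (to kill $c^K/\delta$), then $\delta\downarrow 0$ through points where $r\pm\delta$ avoid atoms of all the $\mathcal{R}_\emptyset^{(K)}$ and of $\mathcal{R}_\emptyset$, a countable set of exclusions as you note. Your proposal is essentially the argument the paper delegates to its citations, written out in full.
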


Note that, while $\boldsymbol{R}$ is defined in a deterministic fashion for a given directed graph, the randomness of our underlying graphs makes $\boldsymbol{R}$ a random vector and $\mathcal{R}_{\emptyset}$ a non-negative random variable. 

The above corollary follows from Corollary \ref{wconp} exactly as in the proof of \cite[Theorem 4.6]{banerjee2022pagerank} (see also \cite[Theorem 2.1]{garavaglia2020local}) and its proof is omitted. Corollary~\ref{degPR} not only identifies the limiting joint in-degree and PageRank distribution, but the limiting objects are especially tractable from the point of view of moment computations and large deviations analysis, as they are defined as explicit functionals of the continuous time Markov chain $\{\mathcal{T}^c(t) : t \ge 0\}$ and an independent exponential random variable $\chi$. Specifically, 
$
\InDeg_\emptyset \stackrel{d}{=} \bar{\xi}_f^{(k)}(\chi) = \sum_{i=1}^{\mathcal{D}_k} \xi_f^{k,i}(\chi)$,
and $\mathcal{X}_\emptyset := \mathcal{R}_\emptyset/\mathcal{D}_\emptyset$ satisfies a renewal-type equation which we now describe. For $t \ge 0$, let $\InDeg_\emptyset(t)$ and $\mathcal{R}_\emptyset(t)$ denote the in-degree and PageRank of the root $\emptyset$ in $\mathcal{T}^c(t)$. Let $\{\sigma^\emptyset_i\}_{i \ge 1}$ denote the birth times of the children of the root in $\{\mathcal{T}^c(t) : t \ge 0\}$. For $t \ge 0$ and $i \ge 1$, let $\mathcal{D}_i$ and $\mathcal{R}_i(t)$ denote the mark (out-degree) and PageRank, respectively, of the $i$th child of $\emptyset$ in $\mathcal{T}^c(t +  \sigma^\emptyset_i)$.  Write $\mathcal{X}_\emptyset(t) := \mathcal{R}_\emptyset(t)/\mathcal{D}_\emptyset$ and $\mathcal{X}_i(t) := \mathcal{R}_i(t)/\mathcal{D}_i$. Then we have the following identity:
$$
\mathcal{X}_\emptyset \stackrel{d}{=} \mathcal{X}_\emptyset(\chi) = \frac{c}{\mathcal{D}_\emptyset}\sum_{i=1}^{\InDeg_\emptyset(\chi)}\mathcal{X}_i(\chi-\sigma^\emptyset_i)  + \frac{1-c}{\mathcal{D}_\emptyset}.
$$
The above identity has a nice recursive structure because, conditionally on $\{\sigma^\emptyset_i\}_{i \ge 1}$, $\{\mathcal{X}_i(\cdot) : i \ge 1\}$ are i.i.d. having the same distribution as $\mathcal{X}_\emptyset(\cdot)$. This identity enables the detailed analysis of the distribution of $\mathcal{X}_\emptyset$ through renewal theoretic techniques \cite{nummelin1978uniform,niemi1986non}, generator-based methods \cite{banerjee2022pagerank}, and the extensive theory of Crump-Mode-Jagers branching processes \cite{jagers-nerman-1,nerman1981convergence}. This approach was already used in  \cite{banerjee2022pagerank} for the DPA($d,\beta$) model.

A future companion paper to this work will present the large deviations behavior of the distributions of the in-degree and the PageRank of a typical vertex in the general CBP. As that work will show, this behavior is heavily determined by the attachment function $f$ and spans the entire range of distributions, from exponential tails to regularly varying ones. However, for illustration purposes, we include here some observations about the limiting in-degree distribution in the preferential and uniform attachment cases.

\begin{proposition} \label{lincase}
Let $\InDeg_\emptyset$ denote the in-degree of the root of $\mathcal{T}^c(\chi)$. Let $h(d) := H(d) - H(d-1) = P(\mathcal{D}=d), \, d \in \mathbb{N},$ be the pmf of the node marks. Then,
\begin{enumerate}
\item {\rm \bf Preferential attachment:} If $f(k) = k + \beta$, with $\beta > -1$, then for any $x \in \mathbb{N} \cup \{0\}$,
$$
P(\InDeg_\emptyset = x) = \sum_{d=1}^{\infty}h(d)(2 + \beta) \frac{\Gamma(2+\beta + d(\beta+1)) \, \Gamma(x + d(\beta + 1))}{\Gamma(d(\beta + 1)) \, \Gamma(x + d(\beta + 1) + 3 + \beta)},
$$
where $\Gamma(\cdot)$ is the Gamma function. In particular, if $h(d) = d^{-\gamma} L(d)$ for some finite $\gamma \geq 2$ and slowly varying function $L(\cdot)$ (with $E[\mathcal{D}]< \infty$), then 
\begin{align*}
P( \InDeg_\emptyset = x) &= (1+o(1))  K_\gamma P(XY > x) 
\end{align*}
as $x \to \infty$, where $P(X > x)= (1 + \beta + x)^{-3-\beta}$ for $x > -\beta$ is a Type II Pareto random variable, independent of $Y$, and $P(Y = d) = d^{-\gamma-1} l(d) L(d)/K_\gamma$ for $d \in \mathbb{N}$, where $K_\gamma = \sum_{d=1}^\infty d^{-\gamma-1} l(d) L(d)$ and
$$l(d) := \frac{(2+\beta) \Gamma(2+\beta + d(\beta+1)) }{d^{2+\beta} \Gamma(d(\beta + 1))} \to  (2+\beta) (\beta+1)^{2+\beta}, \qquad d \to \infty.$$
Moreover, $x \mapsto P(XY > x)$ is regularly varying with tail index $\min\{3+\beta,\gamma\}$, and
$$K_\gamma  P(XY > x) = (1+o(1)) \begin{cases} K_\gamma  E[Y^{3+\beta}] x^{-3-\beta}, & \text{if } \gamma > 3+\beta  \\
E[ (X^+)^\gamma] l(\infty) L(x) x^{-\gamma}, & \text{if } 2 \leq \gamma < 3+\beta, \end{cases}$$
as $x \to \infty$.

\item {\rm \bf Uniform attachment:}  If $f(k) \equiv \beta$ for some $\beta > 0$, then for any $x \in \mathbb{N} \cup \{0\}$,
$$
P(\InDeg_\emptyset = x) = \sum_{d=1}^{\infty}h(d)\frac{1}{d+1}\left(1 + \frac{1}{d}\right)^{-x}.
$$
In particular, if $h(d) = d^{-\gamma} L(d)$ for some finite $\gamma\geq 2$ and slowly varying function $L(\cdot)$ (with $E[\mathcal{D}]< \infty$), then 
\begin{align*}
P(\InDeg_\emptyset = x)  = (1+o(1)) E[ W^\gamma]   x^{-\gamma} L(x) 
 \end{align*}
as $x \to \infty$, where $W$ is an exponential random variable with mean one. 

\end{enumerate}
\end{proposition}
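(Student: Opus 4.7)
The plan is to prove both items of the proposition in parallel, first deriving the closed-form pmf by exploiting the special structure of $f$, and then extracting the regularly varying tail via standard heavy-tail machinery. In both parts the starting point is the identity noted just after Corollary~\ref{degPR}:
\[
\InDeg_\emptyset \stackrel{d}{=} \sum_{i=1}^{\mathcal{D}} \xi_f^{(i)}(\chi),
\]
with $\chi \sim \mathrm{Exp}(\lambda)$ independent of $\mathcal{D}$ and the i.i.d.\ pure-birth copies $\xi_f^{(i)}$.

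For the \emph{preferential} case $f(k)=k+\beta$, the key observation is that conditional on $\mathcal{D}=d$, the process $Y(t):=d+\sum_{i=1}^{d}\xi_f^{(i)}(t)$ is itself a pure-birth Markov chain started at $d$ with jump rate $k+d\beta$ from state $k$, because $\sum_{i=1}^{d} f(\xi_f^{(i)}+1)=\sum_{i}(\xi_f^{(i)}+1+\beta)=Y-d+d(1+\beta)=Y+d\beta$. Hence the $m$th gap of $Y$ after state $d$ is $\mathrm{Exp}(d(1+\beta)+m-1)$, and independence of $\chi$ combined with $P(\tau_x\leq\chi)=E[e^{-\lambda\tau_x}]=\prod_{m=1}^x\mathrm{rate}_m/(\mathrm{rate}_m+\lambda)$ yields a telescoping product that rewrites as a Gamma ratio. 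The Malthusian $\lambda=2+\beta$ is identified from $\hat\rho(\lambda)=1$ via the closed form $\hat\rho(\theta)=(1+\beta)/(\theta-1)$, which follows from the identity $\sum_{n\geq 0}\Gamma(n+b)/\Gamma(n+c)=\Gamma(b)/[(c-b-1)\Gamma(c-1)]$. Differencing $P(Y(\chi)\geq d+x)-P(Y(\chi)\geq d+x+1)$ and averaging over $h$ yields the stated formula. For the \emph{uniform} case $f\equiv\beta$, $\xi_f$ is Poisson$(\beta)$, so $\sum_i\xi_f^{(i)}(\chi)\mid\chi$ is Poisson$(d\beta\chi)$; $\hat\rho(\theta)=\beta/\theta$ gives $\lambda=\beta$, and integrating the Poisson pmf against $\beta e^{-\beta t}$ collapses explicitly to $d^x/(d+1)^{x+1}$.

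For the preferential tail, I would apply Stirling to the Gamma ratios to show, as $x\to\infty$, that the $d$th summand equals $h(d)d^{2+\beta}l(d)(x+d(1+\beta))^{-(3+\beta)}(1+o(1))$, with $l(d)\to(2+\beta)(1+\beta)^{2+\beta}$ by a second Stirling computation. The algebraic rewriting $d^{3+\beta}(x+d(1+\beta))^{-(3+\beta)}=(1+\beta+x/d)^{-(3+\beta)}=P(X>x/d)$ converts the sum into $K_\gamma\sum_d P(Y=d)P(X>x/d)=K_\gamma P(XY>x)$. I would then apply Breiman's theorem: when $\gamma>3+\beta$, $Y$ has finite $(3+\beta)$-moments and the $X$-tail dominates, giving $P(XY>x)\sim E[Y^{3+\beta}]P(X>x)\sim E[Y^{3+\beta}]x^{-(3+\beta)}$; when $\gamma\in[2,3+\beta)$, $E[(X^+)^{\gamma+\varepsilon}]<\infty$ for small $\varepsilon>0$ and Breiman (in its version for possibly negative $X$) yields $P(XY>x)\sim E[(X^+)^\gamma]P(Y>x)$, with Karamata's tail-summation theorem converting the regularly varying pmf of $Y$ into the regularly varying tail $P(Y>x)\propto l(\infty)L(x)x^{-\gamma}/K_\gamma$.

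For the uniform tail, I would split $\sum_d h(d)d^x/(d+1)^{x+1}$ around $d\asymp\sqrt{x}$: the $d\leq\sqrt{x}$ piece is super-polynomially small, while on $d>\sqrt{x}$ the expansion $(d/(d+1))^x=e^{-x\log(1+1/d)}=e^{-x/d}(1+o(1))$ reduces the sum to $\sum_{d>\sqrt{x}}h(d)d^{-1}e^{-x/d}$. Comparing this Riemann sum to the integral $\int h(x/u)u^{-1}e^{-u}du$ via the substitution $u=x/d$ and invoking slow variation $L(x/u)\to L(x)$ with Potter bounds for dominated convergence produces the leading order $x^{-\gamma}L(x)\int_0^\infty u^{\gamma-1}e^{-u}du$, matching the stated form (up to the coefficient). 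The main obstacles I anticipate are (i) controlling the Stirling error uniformly in $d$ in the preferential regime, especially near the boundary $\gamma=3+\beta$ where both tail sources are comparable, and (ii) justifying the Riemann-sum-to-integral passage in the uniform case under only regular variation of $h$, which needs a truncation-plus-Potter-bounds argument.
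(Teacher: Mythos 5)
Your plan for the closed-form pmfs is correct and takes a genuinely different route from the paper. The paper simply cites \cite{garavaglia2018trees} (Corollaries 1.4 and 1.6), whereas you derive both expressions from scratch: the observation that, conditionally on $\mathcal{D}=d$, the process $d+\sum_{i\le d}\xi_f^{(i)}$ is again a Markov pure-birth chain with rates $k+d\beta$ (preferential) respectively $d\beta$ (uniform), together with the explicit Malthusian rates $\lambda=2+\beta$ and $\lambda=\beta$ and the independence of $\chi$, yields both formulas in a self-contained way. For the preferential tail your argument (Stirling estimate on the Gamma ratios, algebraic rewrite to $P(X>x/d)$, passage to $K_\gamma P(XY>x)$, then Breiman) is essentially the paper's; the paper is more careful only about the truncation at $b(x)=\lfloor x^{1+\epsilon}\rfloor$ needed to turn the pointwise Stirling estimate into a single $(1+o(1))$ factor.

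For the uniform tail you take a genuinely different route: the paper's proof is a short rewrite $P(\InDeg_\emptyset=x)=E[1/\mathcal{D}]\,P(WV>x+1)$ with $V=1/\log(1+1/Y')$ and a single application of Breiman (parallel to the preferential case), whereas you do a direct Laplace-type analysis of the sum. Both can be made to work, but two points need attention. First, your split at $d\asymp\sqrt{x}$ is too low: the relative error in $e^{-x\log(1+1/d)}=e^{-x/d}(1+o(1))$ is of order $e^{x/(2d^2)}-1$, which is $\Theta(1)$ at $d\asymp\sqrt{x}$; you should split at $d\asymp x^{1-\epsilon}$ (the lower piece is still $O(e^{-x^{\epsilon}})$, and above it the approximation is uniform), or separately show that the intermediate range contributes negligibly. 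Second, and worth resolving rather than leaving as ``up to the coefficient'': your integral produces the constant $\int_0^\infty u^{\gamma-1}e^{-u}\,du=\Gamma(\gamma)$, while the proposition states $E[W^\gamma]=\Gamma(\gamma+1)$. A careful application of Karamata's tail-sum theorem to $P(Y'>x)$ in the paper's proof gives $P(V>x)\sim x^{-\gamma}L(x)/(\gamma E[1/\mathcal{D}])$, not $x^{-\gamma}L(x)/E[1/\mathcal{D}]$, so your constant $\Gamma(\gamma)=E[W^\gamma]/\gamma$ appears to be the correct one; the same missing factor $1/\gamma$ affects the $2\le\gamma<3+\beta$ branch of the preferential case, where $P(Y>x)\sim x^{-\gamma}\tilde L(x)/(\gamma K_\gamma)$ rather than $x^{-\gamma}\tilde L(x)/K_\gamma$.
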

The above result shows in particular that, for the preferential attachment case with regularly varying out-degree distribution with exponent $\gamma$, the in-degree is regularly varying with the same exponent as the out-degree if $\gamma \le 3 + \beta$. Otherwise, the tail exponent matches the degree exponent in the tree case (our model with all out-degrees equal to one). Comparing this result to the degree distribution in the preferential attachment model with random out-degrees but deterministic additive fitness studied in \cite{deijfen2009preferential} (see \cite[Proposition~1.4]{deijfen2009preferential}), we observe that if we choose in our model the fitness parameter to be $\beta = \delta/\mu$ (to make it on average $\delta$ after the collapsing procedure) and set the additive fitness parameter of the model in \cite{deijfen2009preferential} to be $\delta$, then we obtain the same transition in the tail exponent for both models.

The model in \cite{deijfen2009preferential} can also be exactly obtained via a closely related CBP where the attachment function $f_v$ of an incoming vertex $v$ depends on its out-degree $D^+$ as $f_v(k) = k + \beta/D^+$, $k \in \mathbb{N}$. Since the construction of the CBP can be done conditionally on the out-degree sequence, one can obtain local limits for this variant using a similar construction to the one used here. 

For the uniform attachment case, we observe the (somewhat surprising) phenomenon that, although the in-degree distribution has exponential tails for deterministic out-degrees, making the out-degree distribution regularly varying also makes the in-degree distribution regularly varying with the same exponent.

The explicit description of the local weak also leads to upper and lower bounds for the tail of the degree distribution for general attachment functions satisfying Assumption~\ref{A.MainAssum}. 

Define the functions 
\begin{align}
G_k(\ell) = \sum_{j=1}^{\ell}\frac{1}{f(j)^k}, \qquad k,\ell \in \mathbb{N}. \label{E.Gi}
\end{align}
The functions $G_i$ are extended to $\mathbb{R}_+$ by linear interpolation (setting $G_i(0)=0$).
Then we have the following.
\begin{proposition}\label{P.logasymptotics}
    Suppose Assumption~\ref{A.MainAssum} holds. Let $\mathcal{N}_{\emptyset}$ denote the in-degree of the root of $\mathcal{T}^c(\chi).$ Let $\cD$ denote a random variable with out-degree distribution $H$. Then we have that for any $x>0$,
    \begin{align*}
        P(\mathcal{N}_{\emptyset} > x ) &\leq  E \left[ (2\cD + 1)e^{-\lambda G_1(\lceil \frac{x}{\cD} \rceil) + 2\lambda^2 G_2(\lceil \frac{x}{\cD} \rceil)}\right],\\
        P(\mathcal{N}_{\emptyset} > x ) &\geq \frac{1}{2}E \left[e^{-\lambda (G_1( \lceil \frac{x}{\mathcal{D}}\rceil )  + \frac{4f(1)^2}{f_*}\log (2\mathcal{D}) \, G_2(\lceil \frac{x}{\mathcal{D}}\rceil))}\right].
    \end{align*}
\end{proposition}
\begin{remark}
For fixed out-degree $m$, Proposition \ref{P.logasymptotics} shows that the tail exponent of the in-degree distribution is of leading order $-\lambda G_1(x/m)$. Moreover, when $G_2(\infty) < \infty$, this quantity gives the precise tail asymptotics of the in-degree distribution in the sense $C_1 e^{-\lambda G_1(x/m)} \le P(\mathcal{N}_{\emptyset} > x ) \le C_2 e^{-\lambda G_1(x/m)}, \, x>0,$ for positive finite constants $C_1,C_2$. If $G_2(\infty) = \infty$, the tail is strictly heavier than $e^{-\lambda G_1(x/m)}$ (owing to the now non-negligible $G_2$ term), thereby exhibiting a \textbf{phase transition}. This phase tansition has been previously observed in the context of `persistence' in \cite{persist2021} where the age and in-degree of vertices in the network exhibit strong (positive) correlation when $G_2(\infty) < \infty$ and weak correlation when $G_2(\infty) = \infty$.  
More careful computations will give the exact tail asymptotics for general sublinear attachment functions, similar to the results for affine attachment functions in Proposition~\ref{lincase}, and this is work in progress.
\end{remark}

The phase transition noted above is also exhibited by the fluctuation asymptotics of the point process $\xi_f$ driving the CTBP in the local weak limit. 
\begin{proposition}\label{L.WeakConv}
Suppose $f(x) = x^\alpha L(x) > 0$ with $\alpha \in [0, 1]$; if $\alpha = 1$ suppose that $\lim_{x \to \infty} L(x) = 0$. Let 
$$V(t) = f(r(t)) G_2(r(t))^{1/2}, \qquad r(t) = \lceil G_1^{-1}(t) \rceil.$$
Then, there exists a random variable $Z$, such that
$$\frac{\xi_f(t) - G_1^{-1}(t)}{V(t)} \Rightarrow Z, \qquad t \to \infty.$$
Moreover, $Z$ has a standard normal distribution when $G_2(\infty) = \infty$ and has moment generating function
$$\kappa(\theta) = E\left[ e^{\theta Z} \right] =  e^{\sum_{k=2}^\infty \frac{(-\theta)^{k}}{k} a_k}, \qquad a_k = G_k(\infty)/G_2(\infty)^{k/2}$$
when $G_2(\infty) < \infty$. 
\end{proposition}

Other related works studying the effect on the degree distribution of heavy-tailed fitness parameters include \cite{lodewijks2020phase} and  \cite{lodewijks2020maximaldegree}. In \cite{lodewijks2020phase}, the authors analyze directed preferential attachment networks with constant out-degree and random additive fitness. They also consider a related model with random out-degrees resulting from every new vertex connecting to each existing vertex (via at most one edge) with probability proportional to its in-degree plus a random additive fitness. In \cite{lodewijks2020maximaldegree}, the same authors consider a directed uniform attachment model with constant out-degree and random fitness, thought of as a random recursive tree with random vertex weights. Both these papers quantify the combined effect of the heavy-tailed fitness distribution and the network attachment mechanism on the limiting degree distribution and the maximal degree behavior. In comparison, the additive fitness appearing in the local limit of our CBP model, for the preferential and uniform attachment cases, is a scalar multiple of the out-degree. Nevertheless, the results in both \cite{lodewijks2020phase} and  \cite{lodewijks2020maximaldegree} are similar in spirit to Proposition~\ref{lincase} above, suggesting a more universal picture for dynamic network models with random additive fitness.

\section{Proofs} \label{S.Proofs}

The rest of the paper contains the proofs of Theorem~\ref{T.Main}, Corollary~\ref{wconp}, Proposition~\ref{lincase},  \ref{P.logasymptotics}, and \ref{L.WeakConv}. 

\subsection{Coupling Construction for $m=1$} \label{SS.Coupling}

Let $\{ \mathcal{T}(t): t \geq 0\}$ denote the discrete skeleton of the CTBP $\boldsymbol{\xi}$ described earlier. Recall that the graph $G(V_n, E_n)$ is obtained by simply collapsing the nodes in the sets $\{ V(i): 1 \leq i \leq n\}$. Clearly, it suffices to run $\boldsymbol{\xi}$ until the time $\sigma_{S_n}$.

Fix $i \in V_n$ and define 
$$t_{n,i} := \frac{1}{\lambda} \log(n/i).$$
We will now explain how to construct a coupling between $\mathcal{G}_i^{(n)}$ and a tree $\mathcal{T}^c_i(t_{n,i})$ that evolves according to the law of $\{\mathcal{T}^c(t): t \geq 0\}$. Recall that in order to simplify the notation, we have omitted the dependence on the marks $\mathbf{D}^+$ and $\boldsymbol{\mathcal{D}}$, and simply write  $\mathcal{G}_i^{(n)}$ and  $\mathcal{T}^c_i(t_{n,i})$ for the marked graphs. 

This coupling will only be successful with high probability for large values of $i$, so although it will be well-defined for any $i \geq 1$, it will most likely fail to satisfy $\mathcal{G}_i^{(n)} \simeq \mathcal{T}^c_i(t_{n,i})$ if $i$ is not sufficiently large. 

We start by sampling the out-degrees $\{D_i^+: 1 \leq i \leq n\}$ and the tree $\mathcal{T}(\sigma_{S_n})$. Note that at this point the entire graph $G(V_n, E_n)$ has been sampled, so steps 1-4 in the construction below are deterministic. Specifically, we will copy (re-use) some of the vertices in $\mathcal{G}_i^{(n)}$ and their birth times to construct $\mathcal{T}_i^c(t_{n,i})$, but potentially ignore others. To start, for $j \ge 1$, let $\kappa_i(j)$ denote the label in $G(V_n, E_n)$ of the $j$th oldest vertex in $\mathcal{G}_i^{(n)}$, with $\kappa_i(1) = i$. Nodes in $\{\mathcal{T}_i^c(t): t \geq 0\}$ will be of two kinds, those that we will copy from $\mathcal{G}_i^{(n)}$ and those that will be generated independently. The construction below collects the vertices that are copied from $\mathcal{G}_i^{(n)}$ onto nodes in $\{ \mathcal{T}_i^c(t): t \geq 0\}$ in the sets $J_i$ and $J_i^*$. Intuitively, $J_i$ tracks the labels of ``good" vertices that were copied over without breaking the coupling, while $J_i^*$ tracks the labels of ``bad" vertices that would result in a miscoupling. It will also create additional `dummy nodes' in $\{\mathcal{T}^c_i(t): t \geq 0\}$ when certain types of miscouplings occur, and those will be collected in the set $J_i^d$ but will receive no labels.

We will use $\tau_{i,j}$ to denote the birth time in $\mathcal{T}_i^c(\cdot)$ of the node corresponding to the $j$th vertex to be added to $J_i \cup J_i^*$. Vertices in $\mathcal{G}_i^{(n)}$ will be explored in the order of their ages. When the $j$th explored vertex has multiple nodes with outgoing edges to vertices in $J_i$, a dummy vertex is created for every such node except the oldest of these nodes. For each such node, say $\omega'$, the associated birth time in $\mathcal{T}^c_i$ is denoted by $\tau_{i,\omega'}$. The purpose of these dummy nodes is to ensure that the law of $\mathcal{T}_i^c(\cdot)$ agrees with that of $\mathcal{T}^c(\cdot)$ (even when the coupling between $\mathcal{T}_i^c(\cdot)$ and $\mathcal{G}_i^{(n)}$ is broken). The time in $\left\{ \mathcal{T}_i^{c}(t): t \geq 0\right\}$ will be tracked by the internal clock $s_i^*$. In the following construction, the last explored vertex in $J_i \cup J_i^*$ before the current vertex will be denoted by $\kappa^*$, which we will call the exploration number.

\begin{enumerate}
\item[1.] Fix $i \in V_n$ and initialize the internal clock $s_i^* = 0$ and the exploration number $\kappa^*= \kappa_i(1) = i$.  The root node of $\{\mathcal{T}^c_i(t): t \geq 0\}$ is born at time $\tau_{i,1} = 0$, i.e., $| \mathcal{T}_i^c(0)| = 1$, and is assigned as its mark $\mathcal{D}_{i,1} = D_i^+$.  

\item[2.] If none of the nodes in $V(i)$ create a self-loop, initialize the sets $J_i = \{i\}$ and $J_i^* = \varnothing$, and move on to the next step. Else, initialize the sets $J_i = \varnothing$ and $J_i^* = \{i \}$, and go to step 4. 

%attach an independent copy of $\{ \bar \xi(t): t \geq 0\}$, conditioned on $\mathcal{D}_{i,1} = D_i^+$, to the root of $\mathcal{T}_i^c$ and continue the construction according to the law of $\{\mathcal{T}^c(t): t \geq 0\}$ until time $t_{n,i}$. 

\item[3.] For $j = 2, \dots, |\mathcal{G}_i^{(n)}|$ do the following:
\begin{enumerate}
\item[a.] Determine $\kappa_i(j)$ on the exploration of $\mathcal{G}_i^{(n)}$. 

\item[b.] If there is a node in $V(\kappa_i(j))$ that attaches to a (node corresponding to a) vertex in $J_i$, go to step 3(c). Otherwise, update $j = j+1$ and go back to step 3. 

\item[c.] Set $\tau_{i,j} = s_i^* +  \sigma_{\omega}- \sigma_{S_{\kappa^*}}$, where $\omega$ is the  oldest node in $V(\kappa_i(j))$ connecting to a vertex in the set $J_i$. Let $\kappa_i(p)$ be the ancestor of $\omega$ in the set $J_i$. Update $s_i^* = \tau_{i,j}$ and add a node labelled $j$ to $\{ \mathcal{T}_i^c(t): t \geq 0\}$ born at time $\tau_{i,j}$, connected to $p$, and having mark $\mathcal{D}_{i,j} = D_{\kappa_i(j)}^+$.

\vspace{.5mm}

\item[d.] If $V(\kappa_i(j))$ creates no self loops nor multiple edges with vertices in $J_i$, update $J_i = J_i \cup \{ \kappa_i(j)\}$. Else, update $J_i^* = J_i^* \cup \{ \kappa_i(j)\}$. 
%and attach an independent copy of $\{ \bar \xi(t): t \geq 0\}$, conditioned on $\mathcal{D}_{i,j} = D_{\kappa_i(j)}^+$ to this node. 
If $\kappa_i(j)$ was added to $J_i^*$ and there are $l \ge 2$ edges from nodes in $V(\kappa_i(j))$ to vertices in $J_i$, for each of the $l-1$ nodes $\omega'$ in $V(\kappa_i(j))$ with $\sigma_{\omega'} > \sigma_{\omega}$, create a `dummy node' and add it to $J_i^d$. Attach this node (without label) to $\mathcal{T}^c_i(\cdot)$, at the node corresponding to the vertex in $J_i$ where the associated edge coming from $\omega'$ was incident, and assign to this node the birth time $\tau_{i,\omega'} = s_i^* + \sigma_{\omega'} - \sigma_{\omega}$.

\item[e.] Set $j = j+1$, update $\kappa^* = \kappa_i(j)$, and go back to step 3. 

\end{enumerate}
\item[4.] Update the internal clock to $s_i^* =  s_i^* + \sigma_{S_n} - \sigma_{S_{\kappa^*}}$.
%The internal clock is set to $s_i^* =\tau_{i,\upsilon}$, where $\upsilon$ is the last vertex that was added to $J_i \cup J_i^*$, so let all the nodes in $\mathcal{T}_i^c(s_i^*)$ continue evolving according to the law of $\{\mathcal{T}^c(t): t \geq 0\}$ for a time $\sigma_{S_n} - \sigma_{S_\upsilon}$, without letting any vertex in $J_i$ reproduce. Update $s_i^* = s_i^* + \sigma_{S_n} - \sigma_{S_\upsilon}$.

\item[5.] To each $\kappa_i(j) \in J_i^*$, attach an independent copy of $\mathcal{T}^c(s_i^* - \tau_{i,j})$ conditioned on its root having mark $\mathcal{D}_{i,j}$ (these are the vertices where miscouplings occurred). For a dummy node $\omega'$ in $J_i^d$, sample an independent mark $\mathcal{D}_{\omega'}$ from the out-degree distribution $H$ and attach an independent copy of $\mathcal{T}^c((s_i^* - \tau_{i,\omega'})^+)$ conditioned on the root having mark $\mathcal{D}_{\omega'}$.

\end{enumerate}

The construction returns $ \mathcal{T}_i^c(s_i^*)$. If $s_i^* < t_{n,i}$, let $\mathcal{T}_i^c(s_i^*)$ continue evolving according to the law of $\{ \mathcal{T}^c(t): t \geq 0\}$ until time $t_{n,i}$ (see Figures \ref{F.LiftedTree} and \ref{F.Exploration}). 

\begin{figure}[h] 
\begin{center}
\includegraphics[scale=0.8, bb = 70 300 550 540, clip]{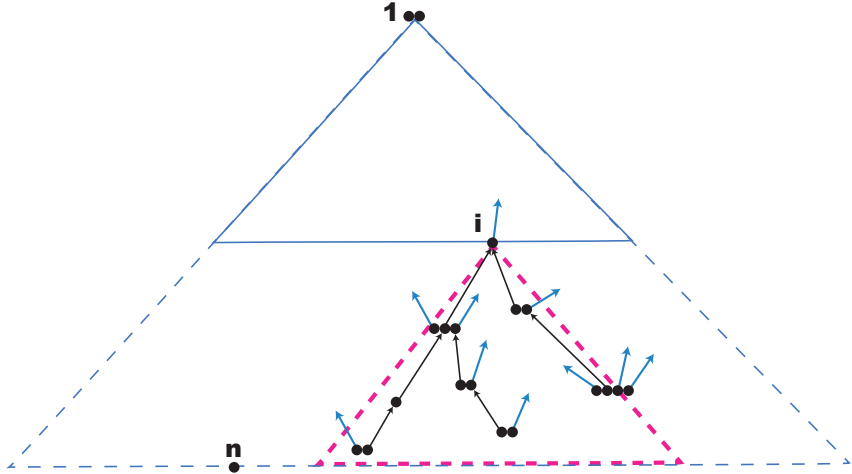}
\caption{The lifted tree $\mathcal{T}(\sigma_{S_n})$ and the in-component of vertex $i$, $\mathcal{G}_i^{(n)}$. Nodes in each family $V(j)$, $j \geq 1$ are depicted as if they were all born at the same time and are labeled according to the vertices of $G(V_n, E_n)$ they give rise to. In this figure, $V(1) = \{1,2\}$, $V(i) = \{S_i\}$ and $V(n) = \{S_n\}$. This depiction of $\mathcal{G}_i^{(n)}$ shows a successful coupling with its local limit.} \label{F.LiftedTree}
\end{center}
\end{figure}

\begin{remark}
Note that in steps 3(c) and 3(d) we allow nodes in $V(\kappa_i(j))$ to have outgoing edges to vertices in $J_i^*$ since these edges do not appear in the coupled tree $\mathcal{T}_i^c(t_{n,i})$ (descendants of nodes corresponding to vertices in $J_i^*$ will be generated independently in step 5). 
The tree structure is always preserved; the unique edge connecting node $j$ in $\mathcal{T}_i^c(t_{n,i})$ to its parent node (corresponding to a vertex in $J_i$) is copied from $\mathcal{G}_i^{(n)}$ in step 3(c). 
\end{remark}

\begin{figure}[h] 
\begin{center}
\begin{picture}(500,180)
\put(0,0){\includegraphics[scale=0.9, bb = 40 50 550 230, clip]{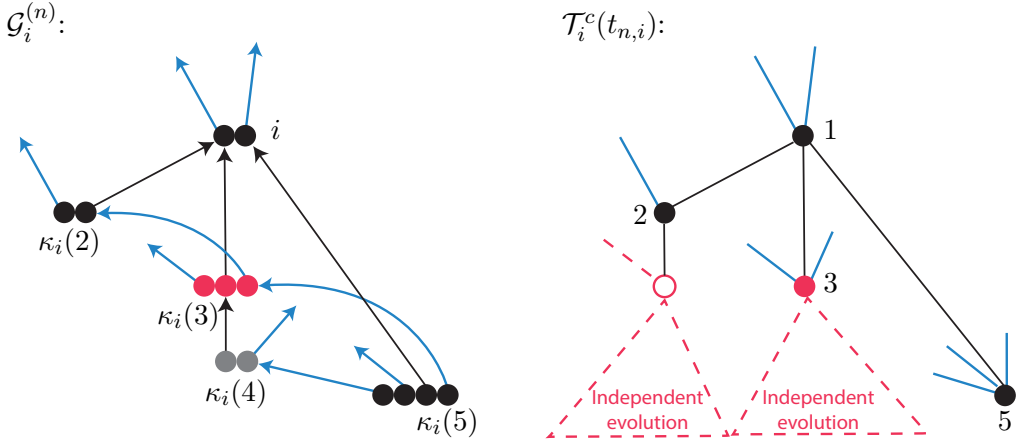}}
\put(10,160){$\mathcal{G}_i^{(n)}$:} 
\put(220,160){$\mathcal{T}_i^c(t_{n,i})$:} 
\put(110,120){$i$}
\put(319,120){$1$}
\put(67,50){$\kappa_i(3)$}
\put(319,62){$3$}
\put(22,78){$\kappa_i(2)$}
\put(247,88){$2$}
\put(85,21){$\kappa_i(4)$}
\put(165,10){$\kappa_i(5)$}
\put(385,10){$5$}
\end{picture}
\caption{Coupling of $\mathcal{G}_i^{(n)}$ and $\mathcal{T}_i^c(t_{n,i})$. On the left we have a depiction of $\mathcal{G}_i^{(n)}$, and on the right, its coupled tree $\mathcal{T}_i^c(t_{n,i})$. Vertices $i, \kappa_i(2)$, and $\kappa_i(5)$ are in $J_i$, while vertex $\kappa_i(3)$ is in $J_i^*$. Vertex $\kappa_i(4)$ is not in $J_i \cup J_i^*$ since it was skipped in step 3(b). The miscoupling caused by vertex $\kappa_i(3)$ generated one `dummy node' in $\mathcal{T}_i^c(t_{n,i})$ with an independently generated mark, the one depicted as an offspring of node $2$. Vertex $\kappa_i(5)$ did not cause a miscoupling since it did not create a self-loop nor did it attach to more than one vertex in $J_i$.} \label{F.Exploration}
\end{center}
\end{figure}

\begin{remark} \label{R.Miscoupling}
Note that the coupling between $\mathcal{G}_i^{(n)}$ and $\mathcal{T}_i^c(t_{n,i})$ can break for one of the following two reasons:
\begin{enumerate}
\item[a.] $|J_i^*| \geq 0$, which means miscouplings in steps 2 or 3(d) occurred, or,
\item[b.] $|J_i^*| = 0$ but $|\mathcal{T}_i^c(t_{n,i})| \neq |\mathcal{G}_i^{(n)}|$, which would happen if $\{\mathcal{T}_i^c(t): t \geq 0\}$ has births in between times  $s_i^*$ and $t_{n,i}$. 
\end{enumerate}
\end{remark}

\subsection{Coupling Construction for $m \geq 2$} \label{SS.Coupling1}

For some finite $m \geq 2$ and $\underline{\ib}= \{i_1, \ldots, i_m \} \subseteq [n]$, we similarly construct a coupling between $\{\cG_{i_1}^{(n)}, \ldots, \cG_{i_m}^{(n)} \}$ and independent trees $\{\cT_{i_1;\underline{\ib}}^{c}(t_{n,i_1}), \ldots, \cT_{i_m;\underline{\ib}}^{c}(t_{n,i_m}) \}$, all of which evolve according to the law of $\{\cT^c(t) : t \geq 0 \}$.

As for the $m=1$ case, sample the out-degrees $\{ D_i^+ : 1 \leq i \leq n \}$ and the tree $\cT(\sigma_{S_n})$. Without loss of generality, assume $i_1 \leq i_2 \leq \ldots \leq i_m$. Set $\{ \cT_{i_1;\ui}^{c}(t) : t \geq 0\} = \{ \cT_{i_1}^c(t) : t \geq 0 \}$ following the steps for the $m=1$ case. Further, define the sets $J_{i_1;\ui} = J_{i_1}$, $J_{i_1;\ui}^* = J_{i_1}^*$, $S_{\ui} = \{ \kappa_{i_1}(j) : 1 \leq j \leq |G_{i_1}^{(n)}| \}$, and the internal clock $s_{i_1;\ui}^* = s_{i_1}^*$.\\
To construct $\cT_{i_2;\ui}^{c}(t_{n,i_2})$, we again use the same construction, with the only difference being that the set $J_{i_2}^*$ is replaced by a larger set $J_{i_2;\ui}^*$ : a vertex is put in $J_{i_2;\ui}^*$ if it creates a loop or multiple edges with vertices in (the current) $J_{i_2}$ or any vertex in $S_{\ui}$. Vertices in $J_{i_2;\ui}^*$ and the dummy nodes in $J_{i_2;\ui}^d$ undergo independent subsequent evolution as described in Step 5 of the coupling construction. We also require that for vertices in $J_{i_2}^* \subseteq J_{i_2;\ui}^*$, we use the same independent copies in the construction of Step 5. Return the internal clock time $s_{i_2;\ui}$ and the tree $\cT_{i_2;\ui}^{c}(s_{i_2;\ui})$, and update the set $S_{\ui} = S_{\ui} \cup \{ \kappa_{i_2}(j) : 1 \leq j \leq |G_{i_2}^{(n)}| \}$.\\
Iterate the above process successively for $i_3, \ldots, i_m$ to construct $m$ trees $$\{\cT_{i_1;\ui}^{c}(s_{i_1;\ui}), \ \ldots, \cT_{i_m;\ui}^{c}(s_{i_m; \ \ui})\},$$ which are all independent of each other and their evolution has the same law as $\{\cT^c(t) : t \geq 0 \}$.

\begin{remark} \label{R.Miscoupling1}
Note that the coupling between $(\mathcal{G}_{i_1}^{(n)}, \ldots, \cG_{i_m}^{(n)})$ and \linebreak $(\cT_{i_1;\ui}^{c}(t_{n,i_1}), \ldots, \cT_{i_m;\ui}^{c}(t_{n,i_m}) )$ can break for one of the following two reasons:
\begin{enumerate}
\item[a.] $|J_{i_{\ell};\ui}^*| > 0$ for some $1 \leq \ell \leq m$, which means miscouplings in steps 2 or 3(d) occurred, or,
\item[b.] $|J_{i_{\ell};\ui}^*| = 0$ for all $1 \le \ell\le m$ but $|\mathcal{T}_{i_{\ell};\ui}^c(t_{n,i_{\ell}})| \neq |\mathcal{G}_{i_{\ell}}^{(n)}|$ for some $\ell$, which would happen if $\{\mathcal{T}_{i_{\ell};\ui}^c(t): t \geq 0\}$ has births in between times  $s_{i_{\ell};\ui}^*$ and $t_{n,i}$. 
\end{enumerate}
\end{remark}

\subsection{Proof of Theorem~\ref{T.Main}}

The proof of Theorem~\ref{T.Main} is split into several lemmas. Although the description of the coupling is given by first sampling $\{ D_i^+: 1 \leq i \leq n\}$ and then constructing the tree $\{ \mathcal{T}(t): t \geq 0\}$ until time $\sigma_{S_n}$, note that the two can be done simultaneously. This will be the approach followed through most of the proofs in this section, under the assumption that $D_i^+$ is sampled from distribution $H$ at time $\sigma_{S_{i-1}}$. 

To ease the reading of this section we have compiled the notation that is used repeatedly: 
\begin{itemize}
\item $\mathscr{G}_t$ denotes the filtration generated by the construction of $\{\mathcal{T}(t): t \geq 0\}$ along with the sequence $\{ D_i^+: i \geq 1\}$ up to time $t$. 
\item $\mu = E[D_1^+]$ denotes the expected out-degree.
\item $\sigma_k$ denotes the birth time of the $k$th node in $\{ \mathcal{T}(t): t \geq 0\}$.
\item $S_k = D_1^+ + \dots + D_k^+$.
\item $V(i) = \{ S_{i-1}+1, S_{i-1}+2, \dots, S_i\}$.
\item $\{ \mathcal{T}^c(t): t \geq 0\}$ denotes the generic version of the discrete skeleton of a CTBP driven by $\{ (\mathcal{D}_k, \bar \xi_f^{(k)}): k \geq 1\}$.
\item  $\Lambda(t) = |\mathcal{T}^c(t)| + \sum_{j=1}^{|\mathcal{T}^c(t)|} \mathcal{D}_{j}$ denotes the number of nodes plus the sum of the marks in $\mathcal{T}^c(t)$. 
\item $\mathcal{G}_i^{(n)}$ denotes the subgraph of $G(V_n, E_n)$ obtained from exploring the in-component of vertex $i$, with the out-degrees of its vertices as marks.
\item $\kappa_i(j)$ denotes the label in $G(V_n, E_n)$ of the $j$th oldest vertex in $\mathcal{G}_i^{(n)}$.
\item $\{ \mathcal{T}_i^c(t): t \geq 0\}$ denotes the discrete skeleton of the tree rooted at vertex $i \in G(V_n, E_n)$ at time $t$, as constructed in the coupling from Section~\ref{SS.Coupling}. 
\item $\{ \mathcal{D}_{i,j}: j \geq 1\}$ denotes the marks of the nodes in $\{ \mathcal{T}_i^c(t): t \geq 0\}$. 
\item $D^{-}_v(t)$ denotes the in-degree of node $v$ in $\mathcal{T}(t)$.
\item $\mathscr{F}^{(i)}_{t}$ denotes the sigma-algebra generated by $\{\mathcal{T}_i^c(s), \{ \mathcal{D}_{i,j}: j \in \mathcal{T}_i^c(s)\} : 0 \leq s \leq t \}$.
\item $\Lambda_i(t) = |\mathcal{T}_i^c(t)| + \sum_{j=1}^{|\mathcal{T}_i^c(t)|} \mathcal{D}_{i,j}$ denotes the number of nodes plus the sum of the marks in $\mathcal{T}^c_i(t)$. 
\item $s_i^*$ denotes the time at which the construction of $\{ \mathcal{T}_i^c(t): t \geq 0\}$ ends in Section~\ref{SS.Coupling}. 
\item $J_i$ denotes the set of vertices of $\mathcal{G}_i^{(n)}$ that were successfully coupled to nodes in $\mathcal{T}_i^c(s_i^*)$.
\item $J_i^*$ denotes the set of vertices of $\mathcal{G}_i^{(n)}$ that caused miscouplings with $\mathcal{T}_i^c(s_i^*)$.
\item $s_{i_{\ell},\ui}, J_{i_{\ell};\ui} ; 1 \leq \ell \leq m$  denotes the corresponding objects in the construction in Section~\ref{SS.Coupling1}.
\end{itemize}

%\red{Comment - added denotes the everywhere, please check for consistency.}

\begin{lemma} \label{L.TreeSize}
For any $t > 0$, we have
$$E\left[ | \mathcal{T}^c(t) | + \sum_{j \in \mathcal{T}^c(t)} \mathcal{D}_j \right] \leq 1 + \mu e^{C_f(\mu + 1)t}.$$
\end{lemma}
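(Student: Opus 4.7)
The plan is to derive a first-order integral inequality for $g(t) := E[\Lambda(t)]$ and then apply Gronwall. The process $\{\Lambda(t) : t \geq 0\}$ is a pure-jump process starting at $\Lambda(0) = 1 + \mathcal{D}_\emptyset$, where at each birth event one node (contributing $1$) together with an independent mark drawn from $H$ (contributing $\mu$ in expectation) is added.

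First, I would identify the infinitesimal birth rate at time $s$, conditional on the history: each node $j \in \mathcal{T}^c(s)$ with birth time $\sigma_j$ produces further children via the superposition $\bar{\xi}_f^{(j)}$ of $\mathcal{D}_j$ independent copies of $\xi_f$, so the rate at which $j$ gives birth is
\[
\sum_{i=1}^{\mathcal{D}_j} f\bigl(\xi_f^{j,i}(s-\sigma_j)+1\bigr).
\]
Summing over $j$ and using Assumption~\ref{A.MainAssum}(ii), $f(k)\leq C_f k$, the total birth rate at time $s$ is bounded by
\[
C_f \sum_{j \in \mathcal{T}^c(s)} \sum_{i=1}^{\mathcal{D}_j}\bigl(\xi_f^{j,i}(s-\sigma_j)+1\bigr)
= C_f\Bigl(\sum_{j \in \mathcal{T}^c(s)} D^-_j(s) + \sum_{j \in \mathcal{T}^c(s)} \mathcal{D}_j\Bigr),
\]
where $D^-_j(s)$ denotes the number of children of node $j$ in $\mathcal{T}^c(s)$. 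Since every non-root node is a child of exactly one node, $\sum_j D^-_j(s) = |\mathcal{T}^c(s)| - 1$, and therefore the bound equals $C_f(\Lambda(s)-1)$.

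Next, since each birth adds one node plus an independent mark of mean $\mu$ to $\Lambda$, the compensator identity (applied to the pure-jump process $\Lambda$, with the new mark independent of $\mathscr{G}_s$) yields
\[
E[\Lambda(t)] = E[\Lambda(0)] + (1+\mu)\,E\!\left[\int_0^t \sum_{j \in \mathcal{T}^c(s)} \sum_{i=1}^{\mathcal{D}_j} f\bigl(\xi_f^{j,i}(s-\sigma_j)+1\bigr)\, ds \right].
\]
Combining with the rate bound and $E[\Lambda(0)] = 1+\mu$, setting $h(t) := E[\Lambda(t)] - 1$ gives
\[
h(t) \leq \mu + C_f(1+\mu)\int_0^t h(s)\, ds.
\]
Applying Gronwall's inequality then yields $h(t) \leq \mu e^{C_f(1+\mu)t}$, which is the desired bound.

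The only nontrivial technical point is the justification of the compensator identity when $\Lambda$ could a priori explode; one can sidestep this by first localizing to the bounded stopping times $T_N := \inf\{t: |\mathcal{T}^c(t)| \geq N\}$, deriving the integral inequality for $E[\Lambda(t\wedge T_N)]$ (which is bounded, so no integrability issue arises), then passing to the limit via monotone convergence once Gronwall gives a uniform-in-$N$ bound. Everything else is a routine calculation.
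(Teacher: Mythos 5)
Your proof is correct, and it reaches the bound by a mildly different route than the paper. You work directly with the original process: identify the conditional birth intensity, bound it by $C_f(\Lambda(s)-1)$ via $f(k)\le C_f k$ and the identity $\sum_j D^-_j(s) = |\mathcal{T}^c(s)|-1$, note that each birth adds $1+\mu$ to $E[\Lambda]$ on average, and close with a compensator identity plus Gronwall, localizing at $T_N$ to justify integrability. The paper instead first stochastically dominates $\mathcal{T}^c$ by a CTBP driven by the pure birth process with rates exactly $C_f(k+1)$, for which the same bookkeeping identity makes the total birth rate exactly $C_f(X(t)-1)$, and then solves the resulting linear ODE for the mean exactly; the quantitative mechanism (linear rate in $\Lambda-1$, mean jump size $1+\mu$) is identical in both arguments. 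What your version buys is a more explicit treatment of the possible-explosion/integrability issue, which the paper's informal $o(\Delta)$ derivation passes over; what the paper's version buys is an exact formula for the mean of the dominating process rather than just an upper bound. One small imprecision on your side: $\Lambda(t\wedge T_N)$ is not bounded (the marks of the first $N$ nodes are unbounded), but it is dominated by $N+\sum_{j=1}^N \mathcal{D}_j$, which is integrable, and that is all your localization step needs.
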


\begin{proof}
Let $\{ \xi_l(t): t \geq 0\}$ be a Markovian pure birth process satisfying $\xi_l(0) = 0$ and having birth rates
$$P(\xi_l(t+dt) = k+1 | \xi_l(t) = k) = C_f (k+1) dt + o(dt), \ k \ge 0.$$
Let $\{ \mathcal{D}_i: i \geq 1\}$ be an i.i.d.~sequence distributed according to $H$, and define 
$$\bar \xi_l^{(k)} = \sum_{i=1}^{\mathcal{D}_k} \xi_l^{k,i},$$
where the $\{ \xi_l^{k,i}: i \geq 1, k \geq 1\}$ are i.i.d.~copies of $\xi_l$. Let $\hat{\mathcal{T}}^c(t)$ be the discrete skeleton of a marked CTBP driven by $\{\bar \xi_l^{(k)}: k \geq 1\}$ at time $t \geq 0$ conditionally on the root being born at time $t = 0$. 

Then, by Assumption~\ref{A.MainAssum} we have that 
$$| \mathcal{T}^c(t) | + \sum_{j \in \mathcal{T}^c(t)} \mathcal{D}_j \leq_\text{s.t.} | \hat{\mathcal{T}}^c(t) | + \sum_{j \in \hat{\mathcal{T}}^c(t)} \mathcal{D}_j.$$

Let $X(t) := |\hat{\mathcal{T}}^c(t) | + \sum_{j \in \hat{\mathcal{T}}^c(t)} \mathcal{D}_j, t \ge 0.$ Note that, if $N_v(t)$ denotes the number of offspring of node $v$ in $\hat{\mathcal{T}}^c(t)$ and $h(m) = P(\mathcal{D}_1 = m)$, then for $t \ge 0$ and small $\Delta>0$,
\begin{align*}
&P(X(t+\Delta) = X(t) +m+1 \vert X(t)) \\
&= C_f\left(\sum_{v \in \hat{\mathcal{T}}^c(t)} (N_v(t) + \mathcal{D}_v)\right)h(m)\Delta + h(m)o(\Delta)\\
&= C_f(X(t) - 1)h(m)\Delta + h(m)o(\Delta).
\end{align*}
Hence, recalling $\mu = E[\mathcal{D}_1]$,
\begin{align*}
E[X(t+\Delta)] - E[X(t)] &= C_f\Delta \, E[X(t) -1]\sum_{m=1}^{\infty}(m+1)h(m) + o(\Delta)\\
& = C_f\Delta \, E[X(t) -1](\mu + 1) + o(\Delta).
\end{align*}
From this, writing $M(t) := E[X(t)]$, we obtain the differential equation
$$
M'(t) = C_f(\mu + 1)(M(t) - 1), \ t \ge 0.
$$
Solving this equation gives
$$
M(t) = 1 + (M(0) - 1)e^{C_f(\mu + 1)t}, \ t \ge 0.
$$
The lemma follows upon noting $M(0) = E[X(0)] = 1 + \mu$.
\end{proof}

\begin{lemma} \label{L.BirthTimes}
We have that
$$ \sup_{j,k \geq m} \left| \sigma_{k} - \sigma_j - \frac{1}{\lambda} \log(k/j) \right| \rightarrow  0, \qquad P\text{-a.s. as } m \to \infty.$$
\end{lemma}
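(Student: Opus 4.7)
The plan is to reduce this statement to the almost-sure Malthusian growth of the CTBP $\boldsymbol{\xi}$ and then to recognize the supremum as a Cauchy-type quantity.

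First, I would apply the classical Nerman--Jagers convergence theorem to the Crump--Mode--Jagers branching process $\{\mathcal{T}(t) : t \geq 0\}$ driven by $\xi_f$. Writing $N(t) := |\mathcal{T}(t)|$, Assumption~\ref{A.MainAssum}(i) and (iv) give $\hat\rho(\lambda)=1$ together with $\lambda > \underline\theta$, so $\hat\rho$ is analytic and strictly decreasing in a neighborhood of $\lambda$; in particular, the moment conditions (including the $x\log x$-type integrability) needed by Nerman hold. Assumption~\ref{A.MainAssum}(iii), $f_* > 0$, guarantees that the process never becomes extinct. Nerman's theorem therefore yields a nonnegative random variable $W$, a.s.\ strictly positive, such that
$$e^{-\lambda t} N(t) \xrightarrow[t\to\infty]{\text{a.s.}} W.$$

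Next, I would evaluate this convergence along the sequence of birth times. Since $\sigma_k \uparrow \infty$ a.s.\ and $N(\sigma_k) = k$, we get $e^{-\lambda \sigma_k} k \to W$ a.s. Taking logarithms and setting $a_k := \sigma_k - \lambda^{-1}\log k$, we obtain
$$a_k \xrightarrow[k\to\infty]{\text{a.s.}} -\lambda^{-1}\log W.$$
Because $\{a_k\}$ converges almost surely, it is almost surely Cauchy, i.e.\ $\sup_{j,k \geq m}|a_k - a_j| \to 0$ a.s. as $m \to \infty$. Rewriting $a_k - a_j = \sigma_k - \sigma_j - \lambda^{-1}\log(k/j)$ immediately yields the claim.

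The main obstacle is verifying the hypotheses of Nerman's theorem for the reproduction process $\xi_f$, since the assumptions are stated in terms of $\hat\rho$ rather than directly as moment conditions on $\xi_f$. However, Assumption~\ref{A.MainAssum}(ii)--(iv) are tailored for this: (ii) controls the linear growth rate of $\xi_f$, (iii) rules out extinction and ensures $\lambda > 0$, and (iv) provides strict analyticity of $\hat\rho$ at $\lambda$, which supplies both uniqueness of the Malthusian parameter and the finite-derivative conditions at $\lambda$ underlying Nerman's $L^1$-and-a.s.\ convergence statement. Once these are in hand, the remainder of the argument is the short Cauchy-sequence observation above.
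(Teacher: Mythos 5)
Your proposal is correct and follows essentially the same route as the paper: both invoke Nerman's almost-sure convergence theorem (the paper cites \cite[Theorem 6.3]{nerman1981convergence} directly for $m e^{-\lambda\sigma_m}\to\Theta$ a.s., which is your $e^{-\lambda t}N(t)\to W$ evaluated along birth times), and then conclude via the same triangle-inequality/Cauchy observation that convergence of $\sigma_k-\lambda^{-1}\log k$ forces the two-index supremum to vanish. The only difference is that you spell out the verification of Nerman's hypotheses from Assumption~\ref{A.MainAssum}, which the paper leaves implicit.
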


\begin{proof}
Note that under Assumption~\ref{A.MainAssum}, using \cite[Theorem 6.3]{nerman1981convergence}, there exists an almost surely positive random variable $\Theta$ such that:
$$m e^{-\lambda \sigma_m} \rightarrow \Theta, \qquad P\text{-a.s. as } m \to \infty.$$

Equivalently, we have that:
$$- \sigma_m + \frac{1}{\lambda} \log m \rightarrow \frac{1}{\lambda} \log \Theta , \qquad P\text{-a.s. as } m \to \infty.$$
From here it follows that
\begin{align*}
 \sup_{j,k \geq m} \left| \sigma_{k} - \sigma_j - \frac{1}{\lambda} \log(k/j) \right| \leq 2  \sup_{k \geq m} \left| -\sigma_{k}  + \frac{1}{\lambda} \log k  - \frac{1}{\lambda} \log \Theta  \right| \to 0 \qquad P\text{-a.s.}
\end{align*}
as $m \to \infty$. 
\end{proof}

\begin{lemma} \label{L.TimeAdjustment}
We have 
\begin{enumerate} 
\item[i)] $$\lim_{n \to \infty} \frac{1}{n} \sum_{i=1}^n P\left( |\mathcal{T}_i^c(t_{n,i})| \neq |\mathcal{T}_i^c(\sigma_{S_n} - \sigma_{S_i})| \right) =0,$$
\item[ii)] $$\lim_{n \to \infty} \frac{1}{n^m} \sum_{\ui \subseteq [n] : |\ui| =m}  \sum_{\ell = 1}^{m} P \left(\left\{ |\cT_{i_{\ell};\ui}^c(t_{n,i_{\ell}})| \neq |\cT_{i_{\ell};\ui}^c(\sigma_{S_n} - \sigma_{S_\ell}) | \right\} \right) = 0.$$
\end{enumerate}
\end{lemma}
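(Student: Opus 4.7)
The plan is to express the averaged sum in (i) as the probability of a single event under a uniformly chosen $I_n \in [n]$, and to reduce both parts of the lemma to the same core estimate: that the two times $t_{n,I_n}$ and $\sigma_{S_n}-\sigma_{S_{I_n}}$ lie in a small window inside which $\mathcal{T}_{I_n}^c$ is unlikely to have any births. Since $|\mathcal{T}_i^c(t)|$ changes only at births, on the event $\{|\sigma_{S_n}-\sigma_{S_i}-t_{n,i}|\le \epsilon\}$ the event $\{|\mathcal{T}_i^c(t_{n,i})|\ne|\mathcal{T}_i^c(\sigma_{S_n}-\sigma_{S_i})|\}$ implies a birth of $\mathcal{T}_i^c$ in $[(t_{n,i}-\epsilon)^+, t_{n,i}+\epsilon]$.

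The first step is time alignment: for $I_n$ uniform on $[n]$, I would show $|\sigma_{S_n}-\sigma_{S_{I_n}}-t_{n,I_n}| \xrightarrow{P} 0$. Since $D_i^+\ge 1$ and $I_n\to\infty$ in probability, $S_{I_n}\ge I_n\to\infty$ in probability, so Lemma~\ref{L.BirthTimes} gives $\sigma_{S_n}-\sigma_{S_{I_n}} = \lambda^{-1}\log(S_n/S_{I_n}) + o_P(1)$. Using that $I_n$ is independent of $\{D_i^+: i\geq 1\}$, the strong law of large numbers (with a random index) gives $S_n/n\to\mu$ and $S_{I_n}/I_n\to\mu$ in probability, hence $\lambda^{-1}\log(S_n/S_{I_n}) = t_{n,I_n} + o_P(1)$.

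Next, by Markov's inequality, and since $\mathcal{T}_i^c(\cdot)$ has marginal law $\mathcal{T}^c(\cdot)$, the probability of a birth of $\mathcal{T}_i^c$ in $[(t_{n,i}-\epsilon)^+, t_{n,i}+\epsilon]$ is at most $E[|\mathcal{T}^c(t_{n,i}+\epsilon)|-|\mathcal{T}^c((t_{n,i}-\epsilon)^+)|]$. Setting $\rho:=C_f(\mu+1)/\lambda$, for small $\epsilon$ Lemma~\ref{L.TreeSize} bounds this by $C\epsilon(n/i)^\rho$. To average this over $i$, I would split into $i\le n/K$, which contributes at most $1/K$ from the trivial bound, and $i>n/K$, where $t_{n,i}\le \lambda^{-1}\log K$ and the contribution is $O(\epsilon\,K^{(\rho-1)_+})$ (with an extra $\log K$ factor if $\rho=1$). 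Optimizing $K=K(\epsilon)$ (e.g.\ $K=\epsilon^{-1/\rho}$ when $\rho>1$) yields a bound $C\epsilon^\alpha + P(|\sigma_{S_n}-\sigma_{S_{I_n}}-t_{n,I_n}|>\epsilon)$ for some $\alpha>0$; sending $n\to\infty$ and then $\epsilon\to 0$ finishes (i).

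For (ii), I would exchange the order of summation and observe that for each fixed $\ell\in\{1,\ldots,m\}$ the marginal distribution of $i_\ell$ is uniform on $[n]$ and, regardless of the rest of $\ui$, the process $\cT^c_{i_\ell;\ui}(\cdot)$ has the same marginal law as $\mathcal{T}^c(\cdot)$. Hence each of the $m$ summands in $\ell$ is bounded by the quantity in (i) and vanishes. The main obstacle is that Lemma~\ref{L.TreeSize} only controls the expected tree size with rate $C_f(\mu+1)$, which may exceed the Malthusian rate $\lambda$; consequently $E[|\mathcal{T}^c(t_{n,i})|]$ may grow faster than $n/i$, so a uniform-in-$i$ bound on the birth probability fails. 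The $K$-splitting above is engineered precisely to absorb this gap, trading a polynomial penalty in $K$ for a $1/K$ correction.
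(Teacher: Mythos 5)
Your overall skeleton matches the paper's: split the event into a time-mismatch event $\{|\sigma_{S_n}-\sigma_{S_i}-t_{n,i}|>\epsilon\}$ and a "birth in a $2\epsilon$-window around $t_{n,i}$" event, control the former by Lemma~\ref{L.BirthTimes} together with the SLLN applied to $S_{I_n}/I_n$ and $S_n/n$, and reduce part (ii) to part (i) via the invariance of the marginal law of $\cT^c_{i_\ell;\ui}(\cdot)$. Where you diverge is in the key estimate for the birth-in-window probability. You use Markov's inequality to bound it by the expected number of births, $E\big[|\cT^c(t_{n,i}+\epsilon)|-|\cT^c((t_{n,i}-\epsilon)^+)|\big]\le C\epsilon (n/i)^{\rho}$ with $\rho = C_f(\mu+1)/\lambda$, and then absorb the potentially supercritical exponent $\rho$ by averaging over $i$ and splitting at $i\lessgtr n/K$. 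The paper instead conditions on $\mathscr{F}^{(i)}_{t_{n,i}-\epsilon}$ and observes that the next birth time is stochastically larger than an $\mathrm{Exp}(C_f\Lambda_i(t_{n,i}-\epsilon))$ random variable, yielding the bound $E[1-e^{-2C_f\epsilon\Lambda_i(t_{n,i})}]$. Averaging over a uniform $I_n$ then gives $E[1-e^{-2C_f\epsilon\Lambda(\chi)}]$, which tends to $0$ as $\epsilon\downarrow 0$ by dominated convergence since $0\le 1-e^{-x}\le 1$ and $\Lambda(\chi)<\infty$ a.s. The paper's bound is automatically uniformly bounded by $1$, so no extra truncation in $i$ is needed; your route requires the $K$-split (and the slightly more delicate constant-tracking when $\rho\ge 1$) precisely because $E[\Lambda(t_{n,i})]$ may exceed $n/i$. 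Both arguments are correct; the exponential-tail version is a bit tighter and avoids the optimization step, while your Markov route is more elementary and makes the role of $C_f(\mu+1)/\lambda$ explicit.

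One small implementation detail you should spell out: to get $E\big[|\cT^c(t+2\epsilon)|-|\cT^c(t)|\big]\le C\epsilon(1+\mu e^{C_f(\mu+1)(t+2\epsilon)})$ from Lemma~\ref{L.TreeSize}, integrate the instantaneous rate bound $C_f\Lambda(s)$ over $s\in(t,t+2\epsilon]$ and use the lemma's bound on $E[\Lambda(s)]$; the lemma as stated only controls $E[\Lambda(t)]$ at a single time, not increments directly.
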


\begin{proof}
Fix $\epsilon > 0$ and note that
\begin{align}\label{int}
&P\left( |\mathcal{T}_i^c(t_{n,i})| \neq |\mathcal{T}_i^c(\sigma_{S_n} - \sigma_{S_i})| \right) \nonumber\\
&\leq P\left( |\mathcal{T}_i^c(t_{n,i}-\epsilon)| < |\mathcal{T}_i^c(t_{n,i}+\epsilon)| \right)  + P\left( \left| \sigma_{S_n} - \sigma_{S_i} - t_{n,i} \right| > \epsilon \right).
\end{align}

To bound the first probability let $\mathscr{F}^{(i)}_{t} = \sigma( \mathcal{T}_i^c(s), \{ \mathcal{D}_{i,j}: j \in \mathcal{T}_i^c(s)\} : 0 \leq s \leq t )$ and $\Lambda_i(t) = |\mathcal{T}_i^c(t)| + \sum_{j=1}^{|\mathcal{T}_i^c(t)|} \mathcal{D}_{i,j}$. Next, 
note that conditionally on $\mathscr{F}_{t_{n,i} -\epsilon}^{(i)}$, the next birth in $\{\mathcal{T}_i^c(t): t \geq 0\}$ will happen in an exponential time that has a rate that, by Assumption~\ref{A.MainAssum}, is bounded from above by
$C_f \Lambda_i(t_{n,i}-\epsilon).$
Therefore,
\begin{align}
&P\left( |\mathcal{T}_i^c(t_{n,i}-\epsilon)| < |\mathcal{T}_i^c(t_{n,i}+\epsilon)| \right) \notag   \\
&\leq E \left[  P\left(  |\mathcal{T}_i^c(t_{n,i} -\epsilon)| < |\mathcal{T}_i^c(t_{n,i} +\epsilon) |  \left| \mathscr{F}_{t_{n,i}-\epsilon}^{(i)} \right. \right) \right] \notag \\
&\leq  E \left[ P\left( \left. \text{Exp}(C_f \Lambda_i(t_{n,i}-\epsilon)) \leq 2\epsilon \, \right| \Lambda_i(t_{n,i}-\epsilon)  \right) \right]  \notag \\
&\leq E \left[ 1 - e^{-C_f 2\epsilon \Lambda_i(t_{n,i})}   \right]  . \label{eq:ExpBound}
\end{align}
Now let $I_n = \lceil nU \rceil$, where $U$ is a uniform $[0,1]$ independent of everything else, and note that by Lemma~\ref{L.BirthTimes} and the strong law of large numbers,
\begin{align*}
\lim_{n \to \infty} \left| \sigma_{S_n} - \sigma_{S_{I_n}} - t_{n, I_n} \right| &\leq \lim_{n \to \infty} \left| \sigma_{S_n} - \sigma_{S_{I_n}} - \frac{1}{\lambda} \log(S_n/S_{I_n}) \right| \\
&\hspace{5mm} + \lim_{n \to \infty} \frac{1}{\lambda} \left| \log\left( S_n I_n/(n S_{I_n}) \right) \right| = 0. \qquad P\text{-a.s.}
\end{align*}
Finally, letting $\chi = -(1/\lambda) \log U$, note that $t_{n,I_n} \leq \chi$, and use \eqref{int} to obtain that
\begin{align*}
&\frac{1}{n} \sum_{i=1}^n P\left( |\mathcal{T}_i^c(t_{n,i})| \neq |\mathcal{T}_i^c(\sigma_{S_n} - \sigma_{S_i})| \right) \\
&\leq   E \left[ 1 - e^{-C_f 2\epsilon \Lambda_{I_n}(t_{n,I_n})}   \right] +  P\left(  \left| \sigma_{S_n} - \sigma_{S_{I_n}} - t_{n, I_n} \right| > \epsilon \right) \\
&\leq E \left[ 1 - e^{-C_f 2\epsilon \Lambda(\chi)}   \right] + P\left(  \left| \sigma_{S_n} - \sigma_{S_{I_n}} - t_{n, I_n} \right| > \epsilon \right) \\
&\to  E \left[ 1 - e^{-C_f 2\epsilon \Lambda(\chi)}   \right],
\end{align*}
as $n \to \infty$. Now take $\epsilon \downarrow 0$ to complete the proof. \\
Part (ii) follows exactly as (i) upon using the union bound and the fact that, for any $t>0$, $\ui \subseteq [n]$ with $|\ui| =m$ and $\ell \in \{1,\dots,m\}$, $\cT_{i_{\ell};\ui}^c(t)$ has the same law as $\cT_{i_{\ell}}^c(t)$.
\end{proof}

Recall that $s_i^*$ denotes the internal clock at the end of the coupling construction in Section~\ref{SS.Coupling}.

\begin{lemma} \label{L.LargeTree}
For any $i \in V_n$ set $a_{i} = i^{1/2 -\delta} $ for some $0 < \delta < 1/2$, and define $\vartheta = C_f(\mu+1)/\lambda$. Let $\Lambda_i(t) = |\mathcal{T}_i^c(t)| + \sum_{j=1}^{|\mathcal{T}_i^c(t)|}\mathcal{D}_{i,j}$ and define the event
$$E_{n,i} = \left\{ \Lambda_i(t_{n,i} \vee s_i^*) \leq a_{i} \right\}.$$
Then, for any constant $c_\vartheta \in (\vartheta/(\vartheta+1/2-\delta),1)$, 
$$P(E_{n,i}^c) \leq (\mu+1) n^{-c_\vartheta(1/2+\vartheta-\delta)+ \vartheta} + 1( i < n^{c_\vartheta}) + P\left( |\mathcal{T}_i^c(t_{n,i})| \neq |\mathcal{T}_i^c(\sigma_{S_n} - \sigma_{S_i})| \right).$$
\end{lemma}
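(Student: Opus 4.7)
My plan is to split $E_{n,i}^c$ using the event from Lemma~\ref{L.TimeAdjustment} and then apply Markov's inequality combined with the mean bound from Lemma~\ref{L.TreeSize}. The key preliminary observation is that the final internal clock in the construction of Section~\ref{SS.Coupling} satisfies $s_i^* \le \sigma_{S_n}-\sigma_{S_i}$. Telescoping the updates of $s_i^*$ made in Step~3(c) and adding the adjustment in Step~4 yields an identity of the form
$$s_i^* = (\sigma_{S_n}-\sigma_{S_i}) - \sum_{s}(\sigma_{S_{\kappa_i(s)}} - \sigma_{\omega_s}),$$
where $\omega_s \in V(\kappa_i(s))$ is the oldest node of the $s$-th processed family that connects to $J_i$; each summand is non-negative because $\omega_s \le S_{\kappa_i(s)}$. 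Since $\Lambda_i(\cdot)$ is non-decreasing in $t$, this forces $\Lambda_i(t_{n,i}\vee s_i^*) \le \Lambda_i(t_{n,i}\vee(\sigma_{S_n}-\sigma_{S_i}))$.

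Next, on the event $\{|\mathcal{T}_i^c(t_{n,i})| = |\mathcal{T}_i^c(\sigma_{S_n}-\sigma_{S_i})|\}$, no nodes (and hence no marks) are added to $\mathcal{T}_i^c$ on the time interval between $t_{n,i}$ and $\sigma_{S_n}-\sigma_{S_i}$, so $\Lambda_i$ is constant on that interval and equals $\Lambda_i(t_{n,i})$. Combined with the previous step, this yields
$$E_{n,i}^c \subseteq \{\Lambda_i(t_{n,i}) > a_i\} \cup \{|\mathcal{T}_i^c(t_{n,i})| \neq |\mathcal{T}_i^c(\sigma_{S_n}-\sigma_{S_i})|\},$$
which accounts for the last term of the target inequality via a union bound.

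To bound the remaining probability I would apply Markov's inequality together with Lemma~\ref{L.TreeSize} (applied to $\mathcal{T}_i^c$, which marginally has the same law as $\mathcal{T}^c$):
$$P(\Lambda_i(t_{n,i}) > a_i) \le \frac{1+\mu e^{C_f(\mu+1)t_{n,i}}}{a_i} \le \frac{(1+\mu)(n/i)^\vartheta}{i^{1/2-\delta}},$$
where I used $e^{C_f(\mu+1)t_{n,i}}\ge 1$ since $t_{n,i}\ge 0$. For $i\ge n^{c_\vartheta}$ this is at most $(\mu+1)n^{\vartheta}/n^{c_\vartheta(\vartheta+1/2-\delta)} = (\mu+1)n^{-c_\vartheta(1/2+\vartheta-\delta)+\vartheta}$, while for $i<n^{c_\vartheta}$ I fall back on the trivial bound, producing the indicator term in the statement. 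The main nuisance is the careful bookkeeping needed to establish $s_i^*\le\sigma_{S_n}-\sigma_{S_i}$ (in particular, correctly tracking $\kappa^*$ across iterations skipped at Step~3(b)); once that is in place, the rest is a routine first-moment estimate whose effectiveness hinges on the restriction $i\ge n^{c_\vartheta}$, which ensures $a_i=i^{1/2-\delta}$ dominates the exponential growth factor $(n/i)^\vartheta$ in the mean of $\Lambda_i(t_{n,i})$, with the prescribed range of $c_\vartheta$ being exactly what is needed to make the resulting exponent of $n$ negative.
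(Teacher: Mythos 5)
Your proposal is correct and follows essentially the same route as the paper's proof: reduce $E_{n,i}^c$ to $\{\Lambda_i(t_{n,i})>a_i\}\cup\{|\mathcal{T}_i^c(t_{n,i})|\neq|\mathcal{T}_i^c(\sigma_{S_n}-\sigma_{S_i})|\}$ using $s_i^*\le\sigma_{S_n}-\sigma_{S_i}$ and monotonicity of $\Lambda_i$, then apply Markov's inequality with Lemma~\ref{L.TreeSize} and split on $i\ge n^{c_\vartheta}$ versus $i<n^{c_\vartheta}$. The only difference is that you spell out the telescoping argument for $s_i^*\le\sigma_{S_n}-\sigma_{S_i}$, which the paper simply asserts.
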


\begin{proof}
Note that since $s_i^* \leq \sigma_{S_n} - \sigma_{S_i}$, then for any $\epsilon > 0$ we have
\begin{align*}
&P(E_{n,i}^c) \\
&\leq P\left( |\mathcal{T}_i^c(t_{n,i})| + \sum_{j=1}^{|\mathcal{T}_i^c(t_{n,i})|} \mathcal{D}_{i,j} > a_i \right) + P\left( |\mathcal{T}_i^c(t_{n,i})| < |\mathcal{T}_i^c(\sigma_{S_n} - \sigma_{S_i})| \right) \\
&\leq P\left(\Lambda_i(t_{n,i}) > a_i\right) \left( 1(i \geq n^{c_\vartheta}) + 1(i < n^{c_\vartheta}) \right) + P\left( |\mathcal{T}_i^c(t_{n,i})| \neq |\mathcal{T}_i^c(\sigma_{S_n} - \sigma_{S_i})| \right)  \\
&\leq \frac{E[ \Lambda_i(t_{n,i})] }{a_i} 1(i \geq n^{c_\vartheta}) + 1(i < n^{c_\vartheta})  + P\left( |\mathcal{T}_i^c(t_{n,i})| \neq |\mathcal{T}_i^c(\sigma_{S_n} - \sigma_{S_i})| \right). 
\end{align*}

Now use Lemma~\ref{L.TreeSize} to obtain that for $\vartheta = C_f (\mu+1)/\lambda$, and any $1> c_\vartheta > \vartheta/(1/2+\vartheta - \delta)$, 
\begin{align*}
\frac{E[ \Lambda_i(t_{n,i})] }{a_{i}} 1(i \geq n^{c_\vartheta}) &\leq \frac{1 + \mu e^{C_f (\mu+1) t_{n,i}}}{a_i} 1(i \geq n^{c_\vartheta})\\
&\leq \frac{1 + \mu (n/i)^{\vartheta} }{i^{1/2-\delta} } 1(i \geq n^{c_\vartheta}) \\
&\leq (\mu+1) \frac{n^\vartheta}{i^{1/2+\vartheta-\delta}} 1(i \geq n^{c_\vartheta}) \\
&\leq (\mu+1) n^{-c_\vartheta(1/2+\vartheta-\delta)+ \vartheta}.
\end{align*}
\end{proof}

\begin{remark} \label{L.LargeTreeR}
The above lemma readily extends to $E_{n,i_{\ell};\ui} = \{\Lambda_{i_{\ell};\ui}(t_{n,i} \vee s_{i_{\ell};\ui}^*) \leq a_{i_\ell} \}$, where $\Lambda_{i_{\ell};\ui} = |\cT_{i_{\ell};\ui}(t)| + \sum_{j=1}^{|\cT_{i_{\ell};\ui}(t)|} \cD_{(i_{\ell};\ui),j}$ when using the construction for $m \geq 2$ in Section \ref{SS.Coupling1}. We obtain the same bound (replacing $i$ by $i_\ell$) for $P(E_{n,i_{\ell};\ui}^c)$.
\end{remark}

\begin{lemma} \label{L.Miscoupling}
\begin{enumerate} 
\item[i)] 
For any $i \in V_n$ set $a_{i} = i^{1/2 -\delta} $ for some $0 < \delta < 1/2$. Define the event $E_{n,i}$ as in Lemma~\ref{L.LargeTree}. 
Then, 
$$P\left( E_{n,i} \cap \left\{ |J_i^*| \geq 1 \right\} \right) \leq 1(i = 1) + \frac{2C_f \mu i^{-2\delta}}{f_*},$$
where $f_*$ is as defined in Assumption~\ref{A.MainAssum}.
\item[ii)] For any $\ui = (i_1, \ldots, i_m) \subseteq [n]$, define the events $E_{n,i_{\ell};\ui}$ as in Remark \ref{L.LargeTreeR} with $a_{i_\ell} = i_{\ell}^{1/2 - \delta}$ for some $0 < \delta < 1/2$. Then 
\begin{align*}
P\left(\bigcap_{\ell = 1}^{m} E_{n,i_{\ell};\ui} \cap \bigcup_{\ell=1}^{m} \{|J_{i_{\ell};\ui}^*| \geq 1\}  \right)& \leq 1\left(\min_{1 \leq \ell \leq m} i_{\ell} = 1 \right) \\
&\hspace{5mm} + \left(1 \wedge \frac{2C_fm^2\mu (\max_{1 \le \ell \le m} i_{\ell})^{1-2\delta}}{f_* (\min_{1 \leq \ell \leq m} i_{\ell})}\right).
\end{align*}
\end{enumerate}

\end{lemma}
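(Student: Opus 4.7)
For part (i) with $i=1$, the bound is trivial since $1(i=1)=1$ already dominates any probability. Hence assume $i \geq 2$.

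The plan is the union bound
$$P(E_{n,i} \cap \{|J_i^*| \geq 1\}) \leq \sum_{j \geq 1} P(E_{n,i} \cap \{\kappa_i(j) \in J_i^*\}),$$
controlling each summand by conditioning on the filtration $\mathscr{G}_{\sigma_{S_{\kappa_i(j)-1}}}$ just before the family $V(\kappa_i(j))$ begins arriving. At that instant both $J_i^{(j-1)}$, the set of previously coupled vertices, and the entire CTBP are measurable. The event $\{\kappa_i(j) \in J_i^*\}$ forces that among the $D^+_{\kappa_i(j)}$ sequentially arriving members of $V(\kappa_i(j))$ either one attaches to an already-born sibling (producing a self-loop after collapsing) or two or more attach into $\bigcup_{v \in J_i^{(j-1)}} V(v)$. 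For each incoming member, the probability of hitting a prescribed set $S$ of existing CTBP nodes is the ratio of $f$-weights
$$\frac{\sum_{u \in S} f(D^-_u + 1)}{\sum_v f(D^-_v + 1)} \;\leq\; \frac{C_f \sum_{u \in S}(D^-_u + 1)}{f_* \, S_{\kappa_i(j)-1}},$$
by Assumption~\ref{A.MainAssum}(ii)--(iii); moreover $S_{\kappa_i(j)-1} \geq \kappa_i(j)-1 \geq i$ when $j \geq 2$.

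Next I bound the numerator in each case. For $S$ the already-born members of $V(\kappa_i(j))$, the in-family in-degrees are controlled by the number of later siblings, so summing over arrival order produces a per-step self-loop bound of order $C_f(D^+_{\kappa_i(j)})^2/(f_* i)$. For $S = \bigcup_{v \in J_i^{(j-1)}} V(v)$, both the node count $\sum_v D^+_v$ and the induced tree in-degree are controlled by $\Lambda_i(s_i^*) \leq a_i$ on $E_{n,i}$; a union bound over pairs of arriving members of $V(\kappa_i(j))$ then yields the multi-edge contribution. Summing over $j$ and using, on $E_{n,i}$, the pointwise bound $D^+_{\kappa_i(j)} = \cD_{i,j} \leq \Lambda_i \leq a_i$ together with $\sum_j D^+_{\kappa_i(j)} \cdot 1(\kappa_i(j) \in J_i \cup J_i^*) \leq \Lambda_i \leq a_i$, I obtain $\sum_j (D^+_{\kappa_i(j)})^2 \cdot 1_{E_{n,i}} \leq a_i^2$. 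Taking expectations and extracting a factor $\mu = E[D^+]$ via a tower argument (using that each fresh $D^+_{\kappa_i(j)}$ is independent of the past filtration) delivers the claimed $2 C_f \mu i^{-2\delta}/f_*$ upon substituting $a_i^2/i = i^{-2\delta}$.

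For part (ii), I would replay the same argument with the cumulative explored set $S_{\ui}$ in place of $J_i$, so that a miscoupling in the exploration of tree $\ell$ can now be triggered by multi-edges into the union of previously explored vertices across all $m$ trees. The union bound over the $m$ trees, together with the pairwise inflation of the dangerous set (size up to $m \cdot \max_\ell a_{i_\ell}$), contributes the extra $m^2$ factor; $(\max_\ell i_\ell)^{1-2\delta} = (\max_\ell a_{i_\ell})^2$ replaces $a_i^2$ in the numerator while $\min_\ell i_\ell$ lower-bounds each $S_{\kappa_{i_\ell}(j)-1}$ in the denominator. The indicator $1(\min_\ell i_\ell = 1)$ absorbs the root-case analogue and $1 \wedge \cdot$ is the trivial probability upper bound. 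The main technical obstacle in both parts is the bookkeeping required to control $\sum_{u \in S}(D^-_u+1)$ by $\Lambda_i$: since $D^-_u$ counts children in the CTBP that may lie outside $\bigcup_{v \in J_i} V(v)$ (in miscoupled subtrees or branches lost to the exploration), one must carefully relate these raw CTBP degrees to the coupled tree's size $|\cT_i^c(s_i^*)|$ plus appropriate corrections, and it is this step where the constant $2$ in the final bound emerges.
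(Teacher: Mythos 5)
Your overall route is the paper's: a union bound over the explored vertices of $\cG_i^{(n)}$, attachment probabilities for members of $V(\kappa_i(j))$ bounded by $C_f(\text{weight of the dangerous set})/(f_*S_{\kappa_i(j)-1})$, the dangerous weight controlled by $\Lambda_i\le a_i$, and $S_{\kappa_i(j)-1}\ge i-1$, yielding a bound of order $C_f\mu a_i^2/(f_* i)$. However, there is a genuine gap in how you combine the event $E_{n,i}$ with these conditional estimates. $E_{n,i}=\{\Lambda_i(t_{n,i}\vee s_i^*)\le a_i\}$ depends on the whole construction up to time $t_{n,i}\vee s_i^*$, so it is \emph{not} measurable with respect to $\mathscr{G}_{\sigma_{S_{\kappa_i(j)-1}}}$ or the filtration at the times the members of $V(\kappa_i(j))$ attach; hence you may not write $P(E_{n,i}\cap\{\kappa_i(j)\in J_i^*\})\le E[1_{E_{n,i}}\cdot(\text{conditional bound})]$, nor use ``on $E_{n,i}$, $D^+_{\kappa_i(j)}\le a_i$'' inside those conditional probabilities. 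The paper's proof resolves exactly this by introducing, for each $t$, the adapted event $E_i(t)$ (the same size constraint evaluated on the portion explored by time $t$), observing $E_{n,i}\subseteq E_i(t)$ and that $1(E_i(\sigma_{v-1}))$ is $\mathscr{G}_{\sigma_{v-1}}$-measurable, and running the node-by-node conditioning with $1(E_i(\sigma_{v-1}))$ in place of $1(E_{n,i})$. Your sketch has no substitute for this device, and without it the step where the explored mass and the out-degrees are truncated at $a_i$ is not justified.

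The final accounting is also internally inconsistent: you first claim the pathwise bound $\sum_j (D^+_{\kappa_i(j)})^2 1_{E_{n,i}}\le a_i^2$ (which consumes all out-degree factors) and then ``extract a factor $\mu$ via a tower argument''; you cannot do both, since after the pathwise bound there is no random out-degree left whose mean could produce $\mu$, and if you keep $D^+_{\kappa_i(j)}$ to take its expectation you cannot simultaneously have bounded it by $a_i$. Moreover, the justification ``each fresh $D^+_{\kappa_i(j)}$ is independent of the past filtration'' is not the right one: $\kappa_i(j)$ is a random index discovered through the exploration, so its out-degree is a priori size-biased; the paper instead keeps a single factor $\mathcal{D}_{i,j}$ in the adapted bound $C_f a_i\mathcal{D}_{i,j}/(f_*S_{\kappa_i(j)-1})$, bounds $S_{\kappa_i(j)-1}\ge i-1$ deterministically, uses that the coupled tree's marks are i.i.d.\ with mean $\mu$ (a consequence of the coupling construction), and sums over the at most $a_i$ indices that can be explored before a first miscoupling on the good event; the constant $2$ then comes from $i-1\ge i/2$ for $i\ge 2$, not from the degree bookkeeping you point to. Part (ii) as you describe it (replay with the cumulative explored set) is the paper's idea, but it inherits the same gaps; the paper additionally decomposes over which tree contains the first miscoupling, intersected with no miscoupling in the previously treated trees, which is where the $m^2$ and $\max/\min$ factors arise.
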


\begin{proof}
i) Define the events
$$H_{i,j}  := \{\kappa_i(j) \text{ is the oldest vertex in } J_i^*\},$$
with $H_{i,j} = \emptyset$ if $j > |\mathcal{G}^{(n)}_i|$. Note that
\begin{align*}
P\left( E_{n,i} \cap \left\{ |J_i^*| \geq 1 \right\} \right) &=P\left( E_{n,i} \cap \bigcup_{j=1}^{a_i} H_{i,j} \right) = \sum_{j=1}^{a_i} P\left( E_{n,i} \cap H_{i,j} \right).
\end{align*}
To analyze the last probabilities, we focus on the construction of the tree $\{ \mathcal{T}^c_i(t): t\geq 0\}$, and note that for $H_{i,j}$ to happen it must be that one of the $D_{\kappa_i(j)}^+ = \mathcal{D}_{i,j}$ nodes in $V(\kappa_i(j))$ will attach to one of the younger nodes in $V(\kappa_i(j))$ or to one of the nodes in $\bigcup_{r=1}^{j-1} V(\kappa_i(r))$, and up until this point there have been no miscouplings. For time $t \ge 0$, denote by $s^*_i(t)$ the internal time in the tree process $\mathcal{T}^c_i(\cdot)$ accrued after all vertices in $\mathcal{G}^{(n)}_i$ with at least one node born by time $t$ have been explored. Define the event
$$E_i(t) = \left\{ |\mathcal{T}_i^c(s^*_i(t))| + \sum_{j =1}^{|\mathcal{T}_i^c(s^*_i(t))|} \mathcal{D}_{i,j} \leq a_i \right\},$$
which satisfies $E_{n,i} \subseteq E_i(t)$ for all $0 \leq t \leq t_{n,i} \vee s_i^*$. Next, let $S_{\kappa_i(j)-1} \leq \omega \leq S_{\kappa_i(j)}$ be the oldest node in $V(\kappa_i(j))$ that is born to $\bigcup_{r=1}^{j-1} V(\kappa_i(r))$, and note that conditionally on $\mathscr{G}_{\sigma_\omega}$ the event $H_{i,j}$ will happen if any of the nodes $v \in V(\kappa_i(j)), v > \omega$, is such that $v$ is born to one of the nodes in $U_v := \bigcup_{r=1}^{j-1} V(\kappa_i(r)) \cup \{ S_{\kappa_i(j)-1}, \dots, v-1\}$.
%$H_{i,j}$ will happen if any of the nodes in $\mathcal{T}_i^c(\sigma_\omega)$ has a birth before time $\sigma_{S_{\kappa_i(j)}}$.
Let $\mathcal{H}_{i,j,v}$ be the event that node $v> \omega$ is the first node such that $v$ is born to a node in $U_v$. Then, recalling that $D^{-}_u(t)$ is the in-degree of node $u$ in $\mathcal{T}(t)$, using Assumption~\ref{A.MainAssum}, 
\begin{align*}
&1(E_i(\sigma_\omega)) P(H_{i,j} | \mathscr{G}_{\sigma_\omega}) \\
&\leq \sum_{v = \omega+1}^{S_{\kappa_i(j)}} 1(E_i(\sigma_\omega)) P( \mathcal{H}_{i,j,v} | \mathscr{G}_{\sigma_\omega}) \\
&= \sum_{v = \omega+1}^{S_{\kappa_i(j)}} E\left[ \left. 1(E_i(\sigma_{v-1})) P( \mathcal{H}_{i,j,v} | \mathscr{G}_{\sigma_{v-1}})  \right| \mathscr{G}_{\sigma_\omega} \right] \\
&=  \sum_{v = \omega+1}^{S_{\kappa_i(j)}} E\left[ \left. 1(E_i(\sigma_{v-1})) \cdot  \frac{\sum_{u\in U_v} f(D^{-}_u(\sigma_{v-1})+1)}{\sum_{u \in \mathcal{T}(\sigma_{v-1})} f(D^{-}_u(\sigma_{v-1})+1)}   \right| \mathscr{G}_{\sigma_\omega} \right] \\
&\leq  \sum_{v = \omega+1}^{S_{\kappa_i(j)}} E\left[ \left. 1(E_i(\sigma_{v-1})) \cdot  \frac{\sum_{u\in U_v} C_f(D^{-}_u(\sigma_{v-1})+1)}{\sum_{u \in \mathcal{T}(\sigma_{v-1})} f_*(v-1)}   \right| \mathscr{G}_{\sigma_\omega} \right]  \\
&\leq \frac{C_f}{f_* \omega} \sum_{v = \omega+1}^{S_{\kappa_i(j)}} E\left[ \left. 1(E_i(\sigma_{v-1})) \left(|\mathcal{T}_i^c(\sigma_{v-1})| + \sum_{k =1}^{|\mathcal{T}_i^c(\sigma_{v-1})|} \mathcal{D}_{i,k} \right)    \right| \mathscr{G}_{\sigma_\omega} \right] \\
&\leq \frac{C_f a_i \mathcal{D}_{i,j} }{f_* S_{\kappa_i(j)-1} } .
\end{align*}
It follows that
$$P(E_{n,i} \cap H_{i,j}) \leq \frac{C_f a_i }{f_*} E\left[ \frac{\mathcal{D}_{i,j}}{S_{\kappa_i(j)-1}} \right] ,$$
and since $S_i \geq i$ for all $i \geq 1$, 
\begin{align*}
P(E_{n,i} \cap \{ |J_i^*| \geq 1\}) &\leq  1 \wedge \left( \frac{C_f a_i }{f_*}  \sum_{j=1}^{a_i}E\left[ \frac{\mathcal{D}_{i,j}}{S_{\kappa_i(j)-1}} \right] \right) \leq  1\wedge \left( \frac{C_f \mu a_i^2  }{f_* (i-1)} \right) \\
& \leq  1(i = 1) + \frac{2C_f \mu i^{-2\delta}}{f_*} .
\end{align*}
To prove (ii), for any $\ui = (i_1, \ldots, i_m) \subseteq [n]$, define events $$H_{(i_{\ell};\ui),j} := \{\kappa_{i_{\ell}}(j) {\rm \ is \ the \ oldest \ vertex \ in \ } J_{i_{\ell};\ui}^* \},$$ with $H_{(i_{\ell};\ui),j} = \emptyset$ if $j > |G_{i_{\ell}}^{(n)}|$. Then we can write 
\begin{align*}
     &P\left(\bigcap_{\ell = 1}^{m} E_{n,i_{\ell};\ui} \cap  \bigcup_{\ell=1}^{m} \{|J_{i_{\ell};\ui}^*| \geq 1\} \right)\\
     &\quad  = P\left(\bigcap_{u = 1}^{m} E_{n,i_{u};\ui} \cap \left( \bigcup_{\ell =1}^{m} \left(\bigcap_{k=1}^{\ell-1} \{|J_{i_k;\ui}^*| = 0 \} \right) \cap \left(\bigcup_{j=1}^{a_{i_{\ell}}}H_{(i_{\ell};\ui),j} \right) \right) \right) \\
     &\quad\le \sum_{\ell=1}^{m} \sum_{j=1}^{a_{i_{\ell}}} P\left(\bigcap_{u = 1}^{m} E_{n,i_{u};\ui} \cap \ \left(\bigcap_{k=1}^{\ell-1} \{|J_{i_k;\ui}^*| = 0 \} \right) \cap H_{(i_{\ell};\ui),j} \right),
\end{align*}
where we use the convention that $\cap_{k=1}^0$ is the null set.
Note that for $H_{(i_{\ell};\ui),j}$ to happen, one of the $D_{\kappa_{i_\ell}(j)}^+ = \mathcal{D}_{i_\ell,j}$ nodes in $V(\kappa_{i_{\ell}}(j))$ will attach to one of the younger nodes in $V(\kappa_{i_{\ell}}(j))$ or to one of the nodes in $\bigcup_{k=1}^{\ell-1} \bigcup_{r=1}^{a_{i_k}} V(\kappa_{i_k}(r)) \cup \bigcup_{r=1}^{j-1}V(\kappa_{i_\ell}(r))$. Then, using the same arguments as in proof of part (i), we get that
 \begin{align*}
    P\left(\bigcap_{u = 1}^{m} E_{n,i_{u};\ui} \cap \ \left(\bigcap_{k=1}^{\ell-1} \{|J_{i_k;\ui}^*| = 0 \} \right) \cap H_{(i_{\ell};\ui),j} \right) &\leq \frac{C_f (\sum_{k=1}^{\ell} a_{i_k})}{f_* }E\left[ \frac{\mathcal{D}_{i_\ell,j}}{S_{\kappa_{\ell}(j)-1}}\right].
    \end{align*}
    Therefore,
    \begin{align*}
    &P\left(\bigcap_{\ell = 1}^{m} E_{n,i_{\ell};\ui} \cap  \bigcup_{\ell=1}^{m} \{|J_{i_{\ell};\ui}^*| \geq 1\} \right) \\
    &\leq 1 \wedge \left( \sum_{\ell=1}^{m} \frac{C_f (\sum_{k=1}^{\ell} a_{i_k})}{f_*}\sum_{j=1}^{a_{i_{\ell}}} E\left[ \frac{\mathcal{D}_{i_\ell,j}}{S_{\kappa_{i_\ell}(j)-1}}\right] \right)\\
    &\leq 1 \wedge \left(\frac{C_f m^2 \mu (\max_{1 \leq \ell \leq m} a_{i_{\ell}})^2}{f_* ( (\min_{1 \leq \ell \leq m} i_{\ell}) - 1)} \right) \\
    &\leq 1\left(\min_{1 \leq \ell \leq m} i_{\ell} = 1 \right) + \left(1 \wedge\frac{2C_fm^2\mu (\max_{1 \le \ell \le m} i_{\ell})^{1-2\delta}}{f_* (\min_{1 \leq \ell \leq m} i_{\ell})}\right).
\end{align*}
\end{proof}

\begin{remark}\label{unizero}
Let $U_1, \ldots, U_m$ be independent random variables, uniformly distributed in $[0,1]$, and set $I_{n,\ell} = \ceil{nU_{\ell}}$. Then it is routine to check that $$1 \wedge\frac{2C_fm^2\mu (\max_{1 \le \ell \le m} I_{n,\ell})^{1-2\delta}}{f_* (\min_{1 \leq \ell \leq m} I_{n,\ell})} \xrightarrow{P} 0$$ as $n \rightarrow \infty$. This fact, in conjunction with part (ii) of the above lemma, will be used in the proof of Theorem~\ref{T.Main}(ii).
\end{remark}

\begin{lemma} \label{L.Mistiming}
Let $E_{n,i}$ and $E_{n,i_{\ell};\ui}$ be the events defined in Lemma~\ref{L.LargeTree} and Remark \ref{L.LargeTreeR}. Then,
\begin{enumerate}
\item[i)] 
\begin{align*}
\lim_{n \to \infty} \frac{1}{n} \sum_{i=1}^n P\left( E_{n,i} \cap \left\{ |J_i^*| = 0 \right\} \cap \left\{  |\mathcal{T}_i^c(t_{n,i})| \neq |\mathcal{G}_i^{(n)}| \right\} \right) = 0.
\end{align*}
\item[ii)] \begin{align*} \lim_{n \to \infty} & \frac{1}{n^m} \sum_{\ui \subseteq [n] : |\ui|=m} P \left( \bigcap_{\ell=1}^{m} E_{n,i_{\ell};\ui} \cap  \bigcap_{\ell=1}^{m} \{|J_{i_{\ell};\ui}^*| = 0 \} \cap \bigcup_{\ell=1}^{m} \{ |\cT_{i_{\ell};\ui}^{c}(t_{n,i_{\ell}})| \neq |\cG_{i_{\ell}}^{(n)}| \}\right) = 0. 
\end{align*}
\end{enumerate}
\end{lemma}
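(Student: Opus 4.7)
The plan is to reduce both assertions to Lemma~\ref{L.TimeAdjustment} through the following two-step strategy. First, observe that on $\{|J_i^*|=0\}$ the construction of Section~\ref{SS.Coupling} encounters no miscouplings and produces no dummy nodes, so Step~5 contributes nothing and the tree $\mathcal{T}_i^c(s_i^*)$ obtained at the end of Step~4 is marked-isomorphic to $\mathcal{G}_i^{(n)}$; in particular $|\mathcal{T}_i^c(s_i^*)|=|\mathcal{G}_i^{(n)}|$. Hence on this event $\{|\mathcal{T}_i^c(t_{n,i})|\ne|\mathcal{G}_i^{(n)}|\}=\{|\mathcal{T}_i^c(t_{n,i})|\ne|\mathcal{T}_i^c(s_i^*)|\}$, which can occur only if $\mathcal{T}_i^c$ has a birth in the interval between $s_i^*$ and $t_{n,i}$.

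Next, for any $\epsilon>0$ one splits
\begin{equation*}
P\!\left(E_{n,i}\cap\{|J_i^*|=0\}\cap\{|\mathcal{T}_i^c(t_{n,i})|\ne|\mathcal{G}_i^{(n)}|\}\right)\le P(|s_i^*-t_{n,i}|>\epsilon)+P\!\left(|\mathcal{T}_i^c(t_{n,i}-\epsilon)|<|\mathcal{T}_i^c(t_{n,i}+\epsilon)|\right).
\end{equation*}
For the first term, an induction on $j$ applied to the rule $\tau_{i,j}=s_i^*+\sigma_\omega-\sigma_{S_{\kappa^*}}$ in Step~3(c), followed by the correction in Step~4, shows that on $\{|J_i^*|=0\}$ the final internal clock satisfies $s_i^*=\sigma_{S_n}-\sigma_{S_i}-\Sigma_i$, where $\Sigma_i\ge 0$ is the aggregate intra-family delay $\sum_{j=2}^{|\mathcal{G}_i^{(n)}|}(\sigma_{S_{\kappa_i(j)}}-\sigma_{\omega_j})$ with $\omega_j$ as in Step~3(c); bounding each summand by the Malthusian inter-arrival estimate $O(D_{\kappa_i(j)}^+/\kappa_i(j))$, using $\kappa_i(j)\ge i$ together with the size constraint $\sum_j D_{\kappa_i(j)}^+\le\Lambda_i\le a_i$ imposed by $E_{n,i}$, gives $\Sigma_i=O(a_i/i)=O(i^{-1/2-\delta})$. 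Combining this with $|\sigma_{S_n}-\sigma_{S_i}-t_{n,i}|\to 0$ from Lemma~\ref{L.BirthTimes}, the quantity $|s_i^*-t_{n,i}|$ vanishes in probability after averaging over a uniform $I_n\in V_n$, so the first term averages to zero as $n\to\infty$. For the second term one mimics the proof of Lemma~\ref{L.TimeAdjustment}(i): conditionally on $\mathscr{F}_{t_{n,i}-\epsilon}^{(i)}$ the next birth rate in $\mathcal{T}_i^c$ is bounded by $C_f\Lambda_i(t_{n,i}-\epsilon)$, so the probability is at most $E[1-e^{-2C_f\epsilon\Lambda_i(t_{n,i})}]$, which tends to zero as $\epsilon\downarrow 0$ (after averaging over $I_n$) by dominated convergence and Lemma~\ref{L.TreeSize}. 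Sending $n\to\infty$ first and then $\epsilon\downarrow 0$ completes (i).

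Part (ii) follows by the same strategy applied marginally to each coordinate $i_\ell$ combined with a union bound over $\ell\in\{1,\ldots,m\}$: on $\bigcap_\ell\{|J_{i_\ell;\ui}^*|=0\}$ the corresponding identity $s_{i_\ell;\ui}^*=\sigma_{S_n}-\sigma_{S_{i_\ell}}-\Sigma_{i_\ell;\ui}$ holds for every $\ell$ with $\Sigma_{i_\ell;\ui}=O(i_\ell^{-1/2-\delta})$ on $E_{n,i_\ell;\ui}$, so the argument above applies verbatim to each $\ell$, and summing the $m$ resulting bounds yields the claim. The principal technical step is the explicit induction establishing $s_i^*=\sigma_{S_n}-\sigma_{S_i}-\Sigma_i$ together with the sharp bound on $\Sigma_i$ on $E_{n,i}$; once this is in place, the remainder of the proof parallels Lemma~\ref{L.TimeAdjustment} closely.
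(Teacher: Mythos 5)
Your argument follows the paper's proof in all essentials: you derive the same identity $s_i^*=\sigma_{S_n}-\sigma_{S_i}-\Sigma_i$ (the paper states it as a two-sided inequality), control $\Sigma_i$ via the same ``at most $a_i$ inter-arrivals of the lifted CTBP at index $\ge S_i$'' argument (the paper formalizes this as stochastic domination by an $\mathrm{Erlang}(\sum_j\mathcal{D}_{i,j},S_i)$ variable followed by Markov's inequality), and handle the residual birth-window term by the same exponential-rate bound and dominated convergence used in Lemma~\ref{L.TimeAdjustment}. The only differences are cosmetic: your decomposition is $\{|s_i^*-t_{n,i}|>\epsilon\}\cup\{\text{birth in }(t_{n,i}-\epsilon,t_{n,i}+\epsilon)\}$ whereas the paper separates the two sides of $t_{n,i}$ and recycles Lemma~\ref{L.TimeAdjustment} to absorb the $s_i^*>t_{n,i}$ case; and your phrase ``$\Sigma_i=O(a_i/i)$'' should be read as a bound on $E[\Sigma_i\mathbf{1}(E_{n,i})]$ (or as the Erlang stochastic domination) rather than a deterministic estimate, since the individual inter-arrival times are not pathwise $O(1/\kappa_i(j))$ — your subsequent Markov/averaging step makes clear this is what you intend. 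Also note that the displayed split needs to retain the event $E_{n,i}$ in the first term so the size constraint $\Lambda_i\le a_i$ is available; again, your prose does this even if the display does not.
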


\begin{proof}
Note that on the event $\{ |J_i^*| = 0\}$ we have that
$$\sigma_{S_n} - \sigma_{S_i}  - \sum_{j=2}^{|J_i|} (\sigma_{S_{\kappa_i(j)}} - \sigma_{S_{\kappa_i(j)-1}})  \leq s_i^* \leq \sigma_{S_n} - \sigma_{S_i} .$$
Moreover, since on the event $\{ |J_i^*| = 0\}$ we have that $\mathcal{T}_i^c(s_i^*) \simeq \mathcal{G}_i^{(n)}$,  in order for $ |\mathcal{T}_i^c(t_{n,i})| \neq |\mathcal{G}_i^{(n)}| $ to happen it must be that either $s_i^* < t_{n,i}$ and $\{ \mathcal{T}_i^c(t): t \geq 0\}$ had births in $(s_i^*, t_{n,i})$, or $s_i^* > t_{n,i}$ and  $\{ \mathcal{T}_i^c(t): t \geq 0\}$ had births in $(t_{n,i}, s_i^*)$. 

Fix $\epsilon > 0$, and note that 
\begin{align}
& P\left( E_{n,i} \cap \left\{ |J_i^*| = 0 \right\} \cap \left\{  |\mathcal{T}_i^c(t_{n,i})| \neq |\mathcal{G}_i^{(n)}| \right\} \right)  \notag \\ 
&= P\left( E_{n,i} \cap \left\{ |J_i^*| = 0 \right\} \cap \left\{  |\mathcal{T}_i^c(s_i^*)| < |\mathcal{T}_i^c(t_{n,i}) | \right\} \right)  \notag \\
&\hspace{5mm} +  P\left( E_{n,i} \cap \left\{ |J_i^*| = 0 \right\} \cap \left\{  |\mathcal{T}_i^c(t_{n,i})| < |\mathcal{T}_i^c(s_i^*) | \right\} \right)  \notag \\
&\leq P\left( E_{n,i} \cap \left\{ |J_i^*| = 0 \right\} \cap \left\{    |\mathcal{T}_i^c(t_{n,i} - \epsilon)| < |\mathcal{T}_i^c(t_{n,i})|  \right\} \right) \label{eq:Mistiming}  \\
&\hspace{5mm} + P\left( E_{n,i} \cap \left\{ |J_i^*| = 0 \right\} \cap \{ s_i^* < t_{n,i} - \epsilon \}  \right) \label{eq:Adjustment} \\
&\hspace{5mm} + P\left( |\mathcal{T}_i^c(t_{n,i})|  <  |\mathcal{T}_i^c(\sigma_{S_n} - \sigma_{S_i} )| \right)  . \notag
\end{align} 

Using the same steps leading to \eqref{eq:ExpBound}, we obtain that \eqref{eq:Mistiming} is bounded by
$$P\left(  |\mathcal{T}_i^c(t_{n,i} - \epsilon)| < |\mathcal{T}_i^c(t_{n,i})| \right) \leq E\left[ 1 - e^{- C_f \epsilon \Lambda_i(t_{n,i})} \right].$$

To analyze the probability in \eqref{eq:Adjustment}, note that by Assumption~\ref{A.MainAssum} we have that on the event $\{|J_i^*| = 0, s_i^* < t_{n,i} \}$,
$$\sum_{j=2}^{|J_i|} (\sigma_{S_{\kappa_i(j)}} - \sigma_{S_{\kappa_i(j)-1}})  \leq_\text{s.t.} \text{Erlang}\left( \sum_{j=1}^{|\mathcal{T}_i^c(t_{n,i})|} \mathcal{D}_{i,j}, \, S_i \right) =: \mathcal{E}_{i}. $$
Here, $\mathcal{E}_i$ is a sum of $S_i$ exponential random variables, each having rate $\sum_{j=1}^{|\mathcal{T}_i^c(t_{n,i})|} \mathcal{D}_{i,j}$.
It follows that 
\begin{align*}
&P\left( E_{n,i} \cap \{ |J_i^*|=0\} \cap  \{ s_i^* <t_{n,i} -\epsilon \} \right) \\
&\leq  P\left( E_{n,i} \cap \{ |J_i^*|=0\} \cap  \{ t_{n,i} -\epsilon > s_i^* \geq \sigma_{S_n} - \sigma_{S_i} -\epsilon/2 \} \right) \\
&\hspace{5mm} + P\left( E_{n,i} \cap \{ |J_i^*|=0\} \cap  \left\{ s_i^* < t_{n,i}, \, \sum_{j=2}^{|J_i|} (\sigma_{S_{\kappa_i(j)}} - \sigma_{S_{\kappa_i(j)-1}}) > \epsilon/2 \right\} \right) \\
&\leq P\left( t_{n,i} -\sigma_{S_n} + \sigma_{S_i} > \epsilon/2 \right) + P\left( E_{n,i} \cap  \{  \mathcal{E}_i > \epsilon/2 \} \right).
\end{align*}
By Markov's inequality we have that
$$P\left( E_{n,i} \cap  \{  \mathcal{E}_i > \epsilon/2 \} \right) \leq P( \text{Erlang}(a_i, S_i) > \epsilon/2 ) \leq  1 \wedge E\left[ \frac{2 a_i}{\epsilon S_i} \right] \leq 1 \wedge \frac{2a_i}{\epsilon i}.$$

Finally, let $I_n = \lceil n U \rceil$, where $U$ is a uniform $[0,1]$ independent of everything else. Then, we conclude that
\begin{align*}
&\frac{1}{n} \sum_{i=1}^n P\left( E_{n,i} \cap \left\{ |J_i^*| = 0 \right\} \cap \left\{  |\mathcal{T}_i^c(t_{n,i})| \neq |\mathcal{G}_i^{(n)}| \right\} \right) \\
&\leq  E \left[ 1 - e^{-C_f \epsilon \Lambda_{I_n}(t_{n,I_n}) }   \right] + E\left[ \frac{2a_{I_n}}{\epsilon I_n} \wedge 1\right] + P\left(  t_{n,I_n} -\sigma_{S_n} + \sigma_{S_{I_n}} > \epsilon/2 \right) \\
&\qquad + \frac{1}{n} \sum_{i=1}^n P\left( |\mathcal{T}_i^c(t_{n,i})| \neq |\mathcal{T}_i^c(\sigma_{S_n} - \sigma_{S_i})| \right)\\
&\to E \left[ 1 - e^{-C_f \epsilon \Lambda(\chi) }   \right] , \qquad n \to \infty,
\end{align*}
by Lemma~\ref{L.TimeAdjustment}(i) and the same arguments used in its proof. Taking $\epsilon \downarrow 0$ completes the proof of (i).

To prove (ii), for any $\ui = (i_1, \ldots, i_m) \subseteq [n]$, note that 
\begin{align*}
    &P \left( \bigcap_{\ell=1}^{m} E_{n,i_{\ell};\ui} \cap \bigcap_{\ell=1}^{m} \{|J_{i_{\ell};\ui}^*| = 0 \} \cap \bigcup_{\ell=1}^{m} \{ |\cT_{i_{\ell};\ui}^{c}(t_{n,i_{\ell}})| \neq |\cG_{i_{\ell}}^{(n)}| \}\right)\\
    &\le \sum_{\ell = 1 }^{m} P \left( \bigcap_{j=1}^{\ell} E_{n,i_{j};\ui} \cap \bigcap_{j=1}^{\ell} \{|J_{i_{j};\ui}^*| = 0 \} \cap  \{ |\cT_{i_{\ell};\ui}^{c}(t_{n,i_{\ell}})| \neq |\cG_{i_{\ell}}^{(n)}| \}\right).
\end{align*}
Then note that (ii) follows from the same arguments as (i) upon observing that, on the event $\bigcap_{j=1}^{\ell} E_{n,i_{j};\ui} \cap \bigcap_{j=1}^{\ell} \{|J_{i_{j};\ui}^*| = 0 \}$, $s^*_{i_\ell; \ui} - \sigma_{S_n} - \sigma_{S_{i_\ell}}$ is stochastically dominated by an $ \text{Erlang}(\ell a_{i_\ell}, S_{i_\ell})$ random variable.

\end{proof} 

We are now ready to prove Theorem~\ref{T.Main}.

\begin{proof}[Proof of Theorem~\ref{T.Main}(i)]
For any $i \in [n]$, let $a_i = i^{1/2-\delta}$ for some $0 < \delta < 1/2$ and $E_{n,i}$ be the event defined in Lemma~\ref{L.LargeTree}. 
%For $v \in \mathcal{T}_i^c(t_{n,i})$, let $\theta_i(v) := \kappa_i(j)$, where $\kappa_i(j)$ is the enumeration of $v$ in the exploration of $\mathcal{G}^{(n)}_i$ used in the coupling construction of Section \ref{SS.Coupling}. Also, let $\mathcal{D}_{i,v} := \mathcal{D}_{i,j}$ defined in the coupling construction. 
To start, define the event 
$$F_{n,i} = \left\{ \mathcal{G}_i^{(n)} \simeq \mathcal{T}_i^c(t_{n,i}), \, \bigcap_{\mathbf{j} \in \mathcal{T}_i^c(t_{n,i})} \{ D_{\theta_i(\mathbf{j})}^+ = \mathcal{D}_{\mathbf{j}} \} \right\},$$
where $\theta_i$ is the bijection defining $ \mathcal{G}_i^{(n)} \simeq \mathcal{T}_i^c(t_{n,i})$. Let $U$ be uniformly distributed in $[0,1]$, independent of everything else, and set $I_n = \lceil nU \rceil$. We will start by showing that $P(F_{n,I_n}) \rightarrow 1$ as $n \to \infty$. Observe that, by virtue of the coupling construction in Section \ref{SS.Coupling}, on the event $\left\{|J_i^*| = 0 \right\} \cap \left\{ |\mathcal{T}_i^c(t_{n,i})| = |\mathcal{G}_i^{(n)}|\right\}$, one can obtain a bijection $\theta_i$ by requiring $\theta_i(\mathbf{j}) := \kappa_i(j)$, where $\kappa_i(j)$ is the enumeration of the vertex in $\mathcal{G}^{(n)}_i$ corresponding to $\mathbf{j} \in \mathcal{T}_i^c(t_{n,i})$ in the exploration of $\mathcal{G}^{(n)}_i$.
From this and Remark~\ref{R.Miscoupling}, note that
\begin{align*}
P\left( F_{n, i} \right) &\geq P\left( \left\{ |J_i^*| = 0 \right\} \cap \left\{ |\mathcal{T}_i^c(t_{n,i})| = |\mathcal{G}_i^{(n)}|\right\} \cap E_{n,i} \right) \\
&\geq 1 - P\left( E_{n,i}^c \right)  - P\left( E_{n,i} \cap \left\{ |J_i^*| \ge 1 \right\} \right) \\
&\hspace{5mm} - P\left( E_{n,i} \cap \left\{ |J_i^*| = 0 \right\} \cap \left\{  |\mathcal{T}_i^c(t_{n,i})| \neq |\mathcal{G}_i^{(n)}| \right\} \right). 
\end{align*}
The limit $P(F_{n,I_n}) \rightarrow 1$ as $n \to \infty$ will follow once we show that
\begin{align*} 
\Delta_n &:= \frac{1}{n} \sum_{i=1}^n \left(\phantom{\sum \mkern-24mu} P(E_{n,i}^c) + P(E_{n,i} \cap \{ |J_i^*| \ge 1\}) \right. \\
&\hspace{5mm}  \left.+ P\left(E_{n,i} \cap \{ |J_i^*| = 0\} \cap \left\{ |\mathcal{T}_i^c(t_{n,i})| \neq |\mathcal{G}_i^{(n)}| \right\}\right) \right) \to 0 
\end{align*}
as $n \to \infty$. Note that by part (i) of Lemmas~\ref{L.TimeAdjustment}, \ref{L.LargeTree}, \ref{L.Miscoupling}, \ref{L.Mistiming} we have that for any $\epsilon > 0$, 
\begin{align*}
\lim_{n \to \infty} \Delta_n &\leq \lim_{n \to \infty} \left\{E\left[ 1( I_n < n^{c_\vartheta}) + 1(I_n = 1) +  \frac{2C_f \mu I_n^{-2\delta}}{f_*} \right] \right\} \\
&\leq \lim_{n \to \infty} P(U < n^{c_\vartheta -1}) = 0.
\end{align*}
This in turn implies that
\begin{equation} \label{eq:DiscreteTime}
P( F_{n, I_n})  \to 1, \qquad n \to \infty.
\end{equation}
Finally, note that, recalling $\chi = -(1/\lambda)\log U$ and setting and $\theta(\mathbf{j}) := \theta_{I_n}(\mathbf{j})$,
\begin{align*}
&\left| P\left( \mathcal{G}_{I_n}^{(n)} \simeq \mathcal{T}_{I_n}^c(\chi), \, \bigcap_{\mathbf{j} \in \mathcal{T}_{I_n}^c(\chi)} \{ D_{\theta(\mathbf{j})}^+ = \mathcal{D}_{\mathbf{j}} \} \right) - 1 \right| \\
&\leq \left| P(F_{n,I_n}) - 1 \right| + P\left( \mathcal{T}_{I_n}^c(\chi) \not\simeq \mathcal{T}_{I_n}^c(t_{n, I_n}) \right).
\end{align*} 
To see that $ P\left( \mathcal{T}_{I_n}^c(\chi) \not\simeq \mathcal{T}_{I_n}^c(t_{n, I_n}) \right) \to 0$ as $n \to \infty$, note that for  $\epsilon_n = n^{-1/2} $,
\begin{align}
&P\left( \mathcal{T}_{I_n}^c(\chi) \not\simeq \mathcal{T}_{I_n}^c(t_{n, I_n}) \right) \notag \\
&= P\left( |\mathcal{T}_{I_n}^c(\chi)| > |\mathcal{T}_{I_n}^c(t_{n, I_n})| \right) \notag  \\
&\leq P(\chi - t_{n, I_n} > \epsilon_n) + P\left(  |\mathcal{T}_{I_n}^c(\chi)| > |\mathcal{T}_{I_n}^c(\chi-\epsilon_n)| \} \right) \notag \\
&\leq P\left( \frac{\lceil nU \rceil}{nU} > e^{\lambda \epsilon_n} \right) +  P\left( \text{Exp}\left( C_f \left( |\mathcal{T}_{I_n}^c(\chi-\epsilon_n)| + \sum_{j \in \mathcal{T}_{I_n}^c(\chi-\epsilon_n)} \mathcal{D}_j \right)  \right) \leq \epsilon_n \right)  \notag \\
&\leq \frac{1}{n( e^{\lambda \epsilon_n} -1)}    + E\left[ 1 - e^{-C_f \epsilon_n \left( |\mathcal{T}_{I_n}^c(\chi)| + \sum_{j \in \mathcal{T}_{I_n}^c(\chi)} \mathcal{D}_j \right)}  \right] \to 0, \label{eq:ContinuousTime}
\end{align}
as $n \to \infty$. Noting that $\mathcal{T}_{I_n}^c(\chi)$ has the same law as $\mathcal{T}^c(\chi)$ (with $\mathcal{T}^c(\cdot)$ independent of $\chi$) completes the proof of Theorem~\ref{T.Main}(i).
\end{proof}

We now give the proof of part (ii) of Theorem~\ref{T.Main}. 

\begin{proof}[Proof of Theorem~\ref{T.Main}(ii)]
Recall the events $F_{n,i}$  defined in the proof of part (i) of Theorem~\ref{T.Main}. Let $U_1, \ldots, U_m$ be independent random variables, uniformly distributed in $[0,1]$, and set $I_{n,\ell} = \ceil{nU_{\ell}}$. We first show that $P(\bigcap_{\ell=1}^{m} F_{n,I_{n,\ell}}) \to 1$ as $n \rightarrow \infty$. Note that for any $\ui = (i_1, \ldots, i_m) \subseteq [n]$, 
\begin{align*}
    P\left(\bigcap_{\ell=1}^{m}F_{n,i_{\ell}}\right) &\geq P\left(\bigcap_{\ell=1}^{m} E_{n,i_{\ell};\ui} \cap \bigcap_{\ell=1}^{m} \{|J_{i_{\ell};\ui}^* | = 0\} \cap \bigcap_{\ell=1}^{m} \left\{ |\cT_{i_{\ell};\ui}^c(t_{n,i_{\ell}})| = |\cG_{i_{\ell}}^{(n)}| \right\}\right)\\
    &\geq 1 - \sum_{\ell =1}^{m}P\left(E_{n,i_{\ell};\ui}^c\right) - P\left(\bigcap_{\ell=1}^{m} E_{n,i_{\ell};\ui} \cap \bigcup_{\ell=1}^m \{|J_{i_{\ell};\ui}^*| \geq 1\}\right)\\ & - P\left(\bigcap_{\ell=1}^{m} E_{n,i_{\ell};\ui} \cap \bigcap_{\ell=1}^{m} \{|J_{i_{\ell};\ui}^* | = 0\} \cap \bigcup_{\ell=1}^m\left\{ |\cT_{i_{\ell};\ui}^c(t_{n,i_{\ell}})| \neq |\cG_{i_{\ell}}^{(n)}| \right\}\right). 
\end{align*}
So it suffices to prove \begin{align*}
&\frac{1}{n^m} \sum_{\ui \subseteq [n] : |\ui|=m} \left[ \sum_{\ell =1}^{m}P(E_{n,i_{\ell};\ui}^c) + P\left(\bigcap_{\ell=1}^{m} E_{n,i_{\ell};\ui} \cap \bigcup_{\ell=1}^m \{|J_{i_{\ell};\ui}^*| \geq 1\}\right) \right.\\ &+ \left. P\left(\bigcap_{\ell=1}^{m} E_{n,i_{\ell};\ui} \cap \bigcap_{\ell=1}^{m} \{|J_{i_{\ell};\ui}^* | = 0\} \cap \bigcup_{\ell=1}^m\left\{ |\cT_{i_{\ell};\ui}^c(t_{n,i_{\ell}})| \neq |\cG_{i_{\ell}}^{(n)}| \right\}\right)\right] \to 0
\end{align*}
as $n \to \infty$. But this follows directly once we similarly apply part (ii) of Lemmas~\ref{L.TimeAdjustment}, \ref{L.Miscoupling}, \ref{L.Mistiming} and Lemma \ref{L.LargeTree}, Remark~\ref{L.LargeTreeR}, Remark~\ref{unizero}. Finally, we have \begin{align*}
\left| \cP_{n,m}\left( \bigcap_{\ell=1}^{m} C_{I_{n,\ell}} \right) - 1 \right| &\leq \left|P\left(\bigcap_{\ell=1}^{m}F_{n,I_{n,\ell}}\right) - 1 \right| + \sum_{\ell=1}^{m}P(\cT_{I_{n,\ell}}^c(\chi_\ell) \not \simeq \cT_{I_{n,\ell}}^c(t_{n,I_{n,\ell}})) \\
&\leq \left|P\left(\bigcap_{\ell=1}^{m}F_{n,I_{n,\ell}}\right) - 1 \right| + m P(\cT_{I_n}^c(\chi) \not \simeq \cT_{I_n}^c(t_{n,I_n})). 
\end{align*}
We already showed that both terms converge to $0$ as $n \to \infty$, which completes the proof.
\end{proof}

\begin{proof}[Proof of Corollary \ref{wconp}]
Fix a finite tree $T$ and a deterministic sequence of marks $\{d_{\jb} : \jb \in \cU \}$. For $n \in \mathbb{N}$ and $\{ I_{n,k}: k= 1, 2\}$ i.i.d.~sampled uniformly from $V_n$, independently of anything else, recall the coupling $\cP_{n,2}$ of $\left(\cG^{(n)}_{I_{n,1}}, \cG^{(n)}_{I_{n,2}}\right)$ with $(\mathcal{T}^c_1(\chi_1), \mathcal{T}^c_2(\chi_2))$, where $\mathcal{T}^c_1(\chi_1)$ and $\mathcal{T}^c_2(\chi_2)$ are i.i.d. copies of $\mathcal{T}^c(\chi)$. Denote the corresponding expectation by $\mathbb{E}_{n,2}$.

To simplify the notation, for $i \in V_n$ and $k=1,2$, define the events $$F_i = \left\{ \cG^{(n)}_i \simeq T, \bigcap_{\jb \in T}  \{D^+_{\theta_i(\jb)} = d_{\jb} \} \right\} \ {\rm and} \ \hat{F}_k = \left\{ \cT^c_k(\chi_k) \simeq T, \bigcap_{\jb \in T}  \{ \mathcal{D}^{(k)}_{\jb} = d_{\jb} \} \right\},$$
where $\theta_i$ is the bijection that defines the isomorphism $\cG^{(n)}_i \simeq T$, and $\mathcal{D}^{(k)}_{\mathbf{j}}$ denotes the mark of the node indexed $\mathbf{j}$ in $\cT^c_k(\chi_k)$. Note that

\begin{align}\label{l2bd}
    &\mathbb{E}_{n,2}\left[ \left(\frac{1}{n} \sum_{i=1}^{n} 1(F_i ) -  \cP_{n,2}(\hat F_1) \right)^2 \right]\notag\\
    & = \mathbb{E}_{n,2}\left[\frac{1}{n^2} \sum_{i,j=1}^{n} 1(F_i )1(F_j)\right] - 2\cP_{n,2}(F_{I_{n,1}}) \cP_{n,2}(\hat F_1) +  \left(\cP_{n,2}(\hat F_1)\right)^2.
\end{align}
By Theorem~\ref{T.Main}(i), with $C_{I_n}$ defined in its statement,
$$
|\cP_{n,2}(F_{I_{n,1}}) - \cP_{n,2}(\hat F_1)| \le \cP_n(C_{I_n}^c) \rightarrow 0
$$
as $n \rightarrow \infty$. Moreover, by Theorem~\ref{T.Main}(ii) (applied for $m=2$), with $C_{I_{n,1}},C_{I_{n,2}}$ defined in its statement,
\begin{align*}
&\left|\mathbb{E}_{n,2}\left[ \frac{1}{n^2} \sum_{i,j=1}^{n} 1(F_i )1(F_j)\right] - \left(\cP_{n,2}(\hat F_1)\right)^2\right|
=\left|\mathbb{E}_{n,2}\left[ \frac{1}{n^2} \sum_{i,j=1}^{n} 1(F_i )1(F_j)\right] - \cP_{n,2}(\hat F_1 \cap \hat F_2)\right|\\
&= \left| \cP_{n,2}(F_{I_{n,1}} \cap F_{I_{n,2}}) - \cP_{n,2}(\hat F_1 \cap \hat F_2)\right| \le \cP_{n,2}(C_{I_{n,1}}^c \cup C_{I_{n,2}}^c) \rightarrow 0
\end{align*}
as $n \rightarrow \infty$. Using the above two observations in \eqref{l2bd}, we conclude
$$
\mathbb{E}_{n,2}\left[ \left( \frac{1}{n} \sum_{i=1}^{n} 1(F_i ) -  P \left( \cT^c(\chi) \simeq T, \, \bigcap_{\jb \in T} \{\mathcal{D}_{\jb} = d_{\jb} \} \right) \right)^2 \right] \rightarrow 0
$$
as $n \rightarrow \infty$. The result follows from this.
\end{proof}

\begin{proof}[Proof of Proposition \ref{lincase}]
We write $g(x) \sim f(x)$ as $x \to \infty$ to denote $\lim_{x \to \infty} g(x)/f(x) = 1$. 

(1) \textit{Preferential attachment:} The expression for $P(\InDeg_\emptyset = x)$ follows from \cite[Corollary 1.4]{garavaglia2018trees}. To get the exponent for a regularly varying out-degree distribution, observe that, by Stirling's formula, %we proceed as in \cite[Proposition 1.4]{deijfen2009preferential}. By Stirling's formula,
\begin{align*}
\frac{\Gamma(x + d(\beta + 1))}{\Gamma(x + d(\beta + 1) + 3 + \beta)} &= (x + d(\beta + 1))^{-3-\beta} (1 + O((x + d(\beta + 1))^{-1})), \\
l(d) := \frac{(2+\beta) \Gamma(2+\beta + d(\beta+1)) }{d^{2+\beta} \Gamma(d(\beta + 1))} &=  (2+\beta) (\beta+1)^{2+\beta} (1 + O(d^{-1})).
\end{align*}
Using the expression for $P(\InDeg_\emptyset = x)$  in \cite[Corollary 1.4]{garavaglia2018trees} and the first estimate above, we can write
$$
P(\InDeg_\emptyset = x) = (1 + O(x^{-1}))\sum_{d=1}^{\infty} d^{2+ \beta - \gamma}(x+ (\beta + 1)d)^{-3-\beta} \tilde L(d),
$$
where $\tilde L(d) := l(d) L(d) \sim l(\infty) L(d)$ as $d \to \infty$, where $l(\infty) := \lim_{d \to \infty} l(d)$.

Fix $\epsilon \in (0,1)$ and define $b(x) = \lfloor x^{1+\epsilon} \rfloor$. Note that, using \cite[Proposition 1.5.10]{bingham1989regular},
\begin{align*}
P(\InDeg_\emptyset = x)  &= O\left( \sum_{d=b(x)+1}^\infty \tilde L(d) d^{-1-\gamma} \right) \\
&\hspace{5mm} + (1+ O(x^{-1})) \sum_{d=1}^{b(x)} d^{2+ \beta - \gamma}(x+ (\beta + 1)d)^{-3-\beta} \tilde L(d) \\
&= O\left( b(x)^{-\gamma} \tilde L(b(x)) \right)  + (1+ O(x^{-1})) K_\gamma P(XY > x) 
\end{align*}
where $P(Y = d) = d^{-\gamma-1} \tilde L(d)/K_\gamma$ for $d \in \mathbb{N}$ and $K_\gamma = \sum_{d=1}^\infty d^{-\gamma-1} \tilde L(d)$ and $P(X > x)= (1 + \beta + x)^{-3-\beta}$ for $x > -\beta$ is a Type II Pareto random variable, independent of $Y$. The assertion about regular variation of $x \mapsto P(XY>x)$ follows from \cite[Lemma 4.1]{jessen2006regularly}. By Breiman's theorem (see \cite[Lemma 4.2]{jessen2006regularly}), $$P(XY > x) \sim E[Y^{3+\beta}] P(X > x) \sim E[Y^{3+\beta}] x^{-3-\beta}$$ if $\gamma > 3+\beta$, and $$P(XY > x) \sim E[(X^+)^\gamma] P(Y > x) \sim E[(X^+)^\gamma] K_\gamma^{-1} \tilde L(x) x^{-\gamma}$$ if $2 \le \gamma < 3+\beta$, as $x \to \infty$. Since 
\begin{align*}
 b(x)^{-\gamma} \tilde L(b(x)) &= o\left( x^{- \gamma } \right) = o\left( P(XY > x) \right)
\end{align*}
as $x \to \infty$, we obtain that
\begin{align*}
P(\InDeg_\emptyset = x) &= (1 + o(1))  K_\gamma P(XY > x)
\end{align*}
as $x \to \infty$. 

(2) \textit{Uniform attachment:} The expression for $P(\InDeg_\emptyset = x)$ follows from \cite[Corollary~1.6]{garavaglia2018trees}. To get the lower bound on the in-degree distribution for regularly varying out-degree distribution, observe that
\begin{align*}
P(\InDeg_\emptyset = x) &= \sum_{d=1}^\infty d^{-\gamma-1} L(d) \left(1 + \frac{1}{d} \right)^{-x-1} = E[1/\mathcal{D}] E\left[ \left(1 + \frac{1}{Y'} \right)^{-x-1} \right]  \\
&= E[1/\mathcal{D}]  E\left[ e^{-(x+1) \log(1 + 1/Y')} \right] =  E[1/\mathcal{D}] P\left( \frac{W}{\log(1+1/Y')} > x+1 \right)
\end{align*}
where $W$ is an exponential random variable with rate one, independent of $Y'$, and $P(Y' = d) = d^{-\gamma-1} L(d)/E[1/\mathcal{D}]$ for $d \in \mathbb{N}$. 

Now note that the random variable $V = 1/\log(1+1/Y')$ satisfies, as $x \to \infty$,
\begin{align*}
P(V > x) &= P\left( Y' >  1/(e^{1/x} -1) \right)  = P( Y' > x (1+o(1))) \\
&= (1+o(1)) (E[1/\mathcal{D}])^{-1}  x^{-\gamma} L(x) ,
\end{align*}
and is, therefore, regularly varying with tail index $\gamma$. Breiman's theorem gives now
\begin{align*}
E[1/\mathcal{D}] P(W V > x+1) &= (1+o(1)) E[ W^\gamma] E[1/\mathcal{D}]  P(V > x+1) \\
&= (1+o(1)) E[ W^\gamma]   x^{-\gamma} L(x) 
\end{align*}
as $x \to \infty$. 
\end{proof}

\begin{proof}[Proof of Proposition~\ref{P.logasymptotics}.]
For any $x>0$, $m \in \mathbb{N}$, we will write $y = y(x,m) = x/m$.

{\bf Upper bound.} Fix $m \in \mathbb{N}$. Choose $\kappa = \kappa(y) = c \lambda G_2(\lceil y \rceil)$ with $c > 1$. For any $n \in \mathbb{N}$, define $S(n) := \inf \{t \geq 0 : \xi_f(t) > n \}$ and use the union bound to obtain for $x>0$,
\begin{align*}
P\left( \frac{1}{m} \sum_{i=1}^m \xi_f^{(i)}(\chi) > y \right) &\leq P\left( \frac{1}{m} \sum_{i=1}^m \xi_f^{(i)}(\chi) > y, \, \chi \leq G_1(\lceil y \rceil) - \kappa  \right) + P\left( \chi > G_1(\lceil y \rceil) - \kappa \right) \\
&\leq   m P\left(  \xi_f(\chi) > y , \, \chi \leq G_1(\lceil y \rceil) - \kappa  \right) + e^{-\lambda (G_1(\lceil y \rceil) - \kappa)} \\
&= m P\left( S(\lfloor y \rfloor+1) \leq \chi, \, \chi \leq G_1(\lceil y \rceil) - \kappa  \right) + e^{-\lambda (G_1(\lceil y \rceil) - \kappa)} \\
&\leq m P\left( S(\lceil y \rceil) \leq \chi, \, \chi \leq G_1(\lceil y \rceil) - \kappa  \right) + e^{-\lambda (G_1(\lceil y \rceil) - \kappa)}.
\end{align*}

Next, use Chernoff's bound, the m.g.f.~of $S(\lceil y \rceil)$, and the inequality $\log(1+x) \geq x - x^2/2$ for $x \geq 0$, to obtain that, 
\begin{align*}
&P\left(   S(\lceil y \rceil) \le \chi, \,  \chi \leq G_1(\lceil y \rceil) - \kappa  \right) \\
&= E\left[ P\left( \left. S(\lceil y \rceil) \leq \chi \right| \chi \right) 1(\chi \leq G_1(\lceil y \rceil) - \kappa ) \right] \\
&= E\left[ P\left( \left. G_1(\lceil y \rceil ) -S(\lceil y \rceil) \geq G_1(\lceil y \rceil) -\chi \right| \chi \right) 1(\chi \leq G_1(\lceil y \rceil) - \kappa ) \right] \\
&\leq E\left[ \inf_{\theta > 0} e^{-\theta (G_1(\lceil y \rceil) - \chi)} E\left[ e^{\theta (G_1(\lceil y \rceil) - S(\lceil y \rceil)} \right] 1(\chi \leq G_1(\lceil y \rceil) - \kappa )\right]  \\
&= E\left[ \inf_{\theta > 0} e^{-\theta (G_1(\lceil y \rceil) - \chi)} e^{\sum_{i=1}^{\lceil y \rceil} \left( \frac{\theta}{f(i)} - \log(1 + \theta/f(i)) \right)} 1(\chi \leq G_1(\lceil y \rceil) - \kappa )\right]\\
&\leq E\left[ \inf_{\theta > 0} e^{-\theta (G_1(\lceil y \rceil) - \chi) + \frac{\theta^2}{2} G_2(\lceil y \rceil)} 1(\chi \leq G_1(\lceil y \rceil) - \kappa )\right]\\
&= E\left[ e^{- \frac{(G_1(\lceil y \rceil) - \chi)^2}{2 G_2(\lceil y \rceil)} } 1(\chi \leq G_1(\lceil y \rceil) - \kappa )\right].
\end{align*}
Now, recalling $\chi$ has an exponential distribution with rate $\lambda$, the above bound leads to
\begin{align*}
&P\left(   S(\lceil y \rceil) \le \chi, \,  \chi \leq G_1(\lceil y \rceil) - \kappa  \right)\\
&\le \int_{0}^{G_1(\lceil y \rceil)-\kappa} \lambda e^{-\lambda t - \frac{(G_1(\lceil y \rceil) - t)^2}{2 G_2(\lceil y \rceil)} } dt \\
&= \lambda G_2(\lceil y \rceil)^{1/2}e^{-\lambda G_1(\lceil y \rceil)}  \int_{-G_1(\lceil y \rceil)/G_2(\lceil y \rceil)^{1/2}}^{-\kappa/G_2(\lceil y \rceil)^{1/2} }  e^{-\lambda  G_2(\lceil y \rceil)^{1/2} z  - \frac{z^2}{2 } } dz \\
&= \lambda G_2(\lceil y \rceil)^{1/2} e^{-\lambda G_1(\lceil y \rceil) + \lambda^2 G_2(\lceil y \rceil)/2 }  \int_{-G_1(\lceil y \rceil)/G_2(\lceil y \rceil)^{1/2}}^{-\kappa/G_2(\lceil y \rceil )^{1/2}} e^{-(z + \lambda G_2(\lceil y \rceil)^{1/2})^2/2} dz \\
&\leq \sqrt{2\pi} \lambda G_2(\lceil y \rceil)^{1/2} e^{-\lambda G_1(\lceil y \rceil) + \lambda^2 G_2(\lceil y \rceil)/2 } \Phi\left( - \kappa/G_2(\lceil y \rceil)^{1/2} + \lambda G_2(\lceil y \rceil)^{1/2} \right) \\
&\leq e^{-\lambda G_1(\lceil y \rceil) + \lambda^2 G_2(\lceil y \rceil)/2 }  e^{-\frac{1}{2} \left( \kappa/G_2(\lceil y \rceil)^{1/2} - \lambda G_2(\lceil y \rceil)^{1/2} \right)^2 } \frac{\lambda G_2(\lceil y \rceil)^{1/2} }{\kappa/G_2(\lceil y \rceil)^{1/2} - \lambda G_2(\lceil y \rceil)^{1/2}} \\
&= \frac{\lambda G_2(\lceil y \rceil)}{\kappa - \lambda G_2(\lceil y \rceil)} e^{-\lambda G_1(\lceil y \rceil)  - \frac{\kappa^2}{2 G_2(\lceil y \rceil)}  + \lambda \kappa  } .
\end{align*}
It follows from the union bound that
\begin{align*}
P\left( \frac{1}{m} \sum_{i=1}^m \xi_f^{(i)}(\chi) > y \right) &\leq P\left( \frac{1}{m} \sum_{i=1}^m \xi_f^{(i)}(\chi) > y, \, \chi \leq G_1(\lceil y \rceil) - \kappa  \right) + P\left( \chi > G_1(\lceil y \rceil) - \kappa \right) \\
&\leq m P\left( \xi_f(\chi) > y, \, \chi \leq G_1(\lceil y \rceil) -\kappa \right) + P( \chi > G_1(\lceil y \rceil) - \kappa ) \\
&= m P\left( S(\lceil y \rceil+1) \leq \chi , \, \chi \leq G_1(\lceil y \rceil) - \kappa \right) + e^{-\lambda (G_1(\lceil y \rceil) - \kappa)} \\
&\leq \frac{m\lambda G_2(\lceil y \rceil)}{\kappa - \lambda G_2(\lceil y \rceil)} e^{-\lambda G_1(\lceil y \rceil)  - \frac{\kappa^2}{2 G_2(\lceil y \rceil)}  + \lambda \kappa  } +  e^{-\lambda (G_1(\lceil y \rceil) - \kappa)} \\
&= \frac{m c}{c-1} e^{-\lambda G_1(\lceil y \rceil)  + \left( 1 - \frac{c}{2} \right) \lambda^2 c G_2(\lceil y \rceil)} + e^{-\lambda G_1(\lceil y \rceil) + \lambda^2 c G_2(\lceil y \rceil)} .
\end{align*}
To get the final upper bound, note that
\begin{align*}
    P\left(\sum_{i=1}^{\mathcal{D}} \xi_f^{(i)}(\chi) > x \right) &= \sum_{m=1}^{\infty} P\left(\left.\frac{1}{m}\sum_{i=1}^{m} \xi_f^{(i)}(\chi) > y \right| \cD =m \right)P(\cD=m)\\
    &\leq E \left[ \frac{c \cD}{c-1}e^{-\lambda G_1(\lceil \frac{x}{\cD} \rceil)  + \left( 1 - \frac{c}{2} \right) \lambda^2 c G_2(\lceil \frac{x}{\cD} \rceil)} + e^{-\lambda G_1(\lceil \frac{x}{\cD} \rceil) + \lambda^2 c G_2(\lceil \frac{x}{\cD} \rceil)}\right].
%    &\sim \frac{c \mu}{c-1}e^{-\lambda G_1(\lceil \frac{x}{\mu} \rceil)  + \left( 1 - \frac{c}{2} \right) \lambda^2 c G_2(\lceil \frac{x}{\mu} \rceil)} + e^{-\lambda G_1(\lceil \frac{x}{\mu} \rceil) + \lambda^2 c G_2(\lceil \frac{x}{\mu} \rceil)},
\end{align*}
The desired bound follows on taking $c=2$.
%where $\mu = E[\cD].$

{\bf Lower bound.} For the lower bound, first fix any $m \in \mathbb{N}$. Set $\kappa = \kappa(y) = Mf_* G_2(\lceil y \rceil)$ where we recall $f_* = \inf_{i \ge 1} f(i)$ and $M = M(m) \ge 2$ will be appropriately chosen later. Using the memoryless property of exponential random variables, for any $x>0$, recalling $y=x/m$,
\begin{align*}
P\left( \frac{1}{m} \sum_{i=1}^m \xi_f^{(i)}(\chi) > y \right) &\geq P\left( \frac{1}{m} \sum_{i=1}^m \xi_f^{(i)}(\chi) > y, \, \chi > G_1(\lceil y \rceil)  + \kappa  \right)  \\
&= e^{-\lambda (G_1( \lceil y \rceil)  + \kappa)} P\left( \frac{1}{m} \sum_{i=1}^m \xi_f^{(i)}(\chi + G_1( \lceil y \rceil )  + \kappa) > y \right) \\
&\geq e^{-\lambda (G_1( \lceil y \rceil)  + \kappa)} P\left( \frac{1}{m} \sum_{i=1}^m \xi_f^{(i)}(G_1( \lceil y \rceil )  + \kappa) > y \right)\\
& \ge e^{-\lambda (G_1( \lceil y \rceil)  + \kappa)}  \left(1 - mP\left( \xi_f(G_1( \lceil y \rceil )  + \kappa) \le y \right)\right).
%&= e^{-\lambda G_1(y)} P\left( \frac{1}{m} \sum_{i=1}^m \frac{\xi_f^{(i)}(G_1(y)) - y}{V(G_1(y))} > 0 \right) \\ 
%&= e^{-\lambda G_1( y ) } P(Z > 0) (1+o(1))
\end{align*}
Observe that,
\begin{align*}
P\left( \xi_f(G_1( \lceil y \rceil )  + \kappa) \le y \right) &\le P\left(S(\lceil y \rceil) \ge G_1(\lceil y \rceil )  + \kappa\right)\\
& \le e^{-\frac{f_*}{2}\left(G_1( \lceil y \rceil )  + \kappa\right) - \sum_{i=1}^{\lceil y \rceil}\log\left(1-\frac{f_*}{2f(i)}\right)}
 = e^{-\frac{f_*}{2}\left(G_1( \lceil y \rceil)  + \kappa\right) + \sum_{k=1}^{\infty}\frac{f_*^k}{k2^k}G_k(\lceil y \rceil)}\\
 & \le e^{-\frac{f_*}{2} \kappa + f_*^2G_2(\lceil y \rceil)\sum_{k=2}^{\infty}\frac{1}{k 2^k}}
 \le e^{-\frac{f_*}{2} \kappa + \frac{f_*^2}{2}G_2(\lceil y \rceil)} \le e^{-\frac{Mf_*^2}{4}G_2(\lceil y \rceil)}.
%&\le e^{-\theta\left(G_1( \lfloor y \rfloor )  + \kappa\right) + \sum_{i=1}^{\lfloor y \rfloor + 1}\log\left(1-\frac{\theta}{f(i)}\right)}
\end{align*}
Combining the above estimates, we obtain for any $m \in \mathbb{N}$,
\begin{align*}
 P\left(\sum_{i=1}^{m} \xi_f^{(i)}(\chi) > x \right) \ge e^{-\lambda (G_1( \lceil \frac{x}{m}\rceil )  + M f_* G_2(\lceil \frac{x}{m}\rceil))}\left(1 - m e^{-\frac{Mf_*^2}{4}G_2(\lceil \frac{x}{m}\rceil)}\right).
\end{align*}
Setting $M= \frac{4f(1)^2}{f_*^2}\log (2m) \ge 2$, note that $1 - m e^{-\frac{Mf_*^2}{4}G_2(\lceil \frac{x}{m}\rceil)} \ge 1 - m e^{-\frac{Mf_*^2}{4f(1)^2}}  = \frac{1}{2}$ for every $m \in \mathbb{N}$ and $x>0$.
Hence, we obtain the following lower bound
\begin{align*}
 P\left(\sum_{i=1}^{\mathcal{D}} \xi_f^{(i)}(\chi) > x \right) \ge \frac{1}{2}E \left[e^{-\lambda (G_1( \lceil \frac{x}{\mathcal{D}}\rceil )  + \frac{4f(1)^2}{f_*}\log (2\mathcal{D}) \, G_2(\lceil \frac{x}{\mathcal{D}}\rceil))}\right].
\end{align*}
\end{proof}

The final proof in the paper corresponds to the weak convergence of $\{\xi_f(t): t \geq 0\}$. 

\begin{proof}[Proof of Proposition \ref{L.WeakConv}]
To start, note that
\begin{align*}
\lim_{t \to \infty} \frac{V(t)}{G_1^{-1}(t)} &= \lim_{x \to \infty} \left( \frac{f(x) G_2(\infty)^{1/2}}{x} 1(G_2(\infty) < \infty) + \frac{f(x) G_2(x)^{1/2}}{x} 1(G_2(\infty) = \infty) \right) = 0,
\end{align*}
so $V(t) = o(G_1^{-1}(t))$ as $t \to \infty$. 

Next, note that if $\{ \mathcal{E}_i: i \geq 1\}$ are i.i.d.~exponential random variables with mean one, then, for any $z \geq -G_1^{-1}(t)/V(t)$, 
\begin{align*}
P\left( \frac{\xi_f(t) - G_1^{-1}(t)}{V(t)} > z \right) &= P\left( \xi_f(t) > G_1^{-1}(t) + z V(t) \right) \\
&= P\left( \sum_{i=1}^{ \lfloor G_1^{-1}(t) + z V(t) \rfloor +1} \frac{\mathcal{E}_i}{f(i)} \leq t \right) \\
&= P\left( \sum_{i=1}^{ \lfloor G_1^{-1}(t) + z V(t) \rfloor +1} \frac{1-\mathcal{E}_i}{f(i)} \geq G_1(\lfloor G_1^{-1}(t) + z V(t) \rfloor+1)  -t \right).
\end{align*}
Now let  $\tilde Z(n) =  G_2(n)^{-1/2} \sum_{i=1}^n (1-\mathcal{E}_i)/f(i)$. Note that for any $|\theta|<1$, 
\begin{align*}
E\left[ e^{\theta \tilde Z(n)} \right] &= e^{\theta G_2(n)^{-1/2} G_1(n) }  \prod_{i=1}^{n} E\left[ e^{-\theta G_2(n)^{-1/2} \mathcal{E}_i/f(i)} \right] = e^{\theta u(n)^{-1/2} G_1(n) }  \prod_{i=1}^{n} \frac{1}{1 + \theta G_2(n)^{-1/2} /f(i)} \\
&= e^{\theta G_2(n)^{-1/2} G_1(n)  - \sum_{i=1}^{n} \log (1 + \theta G_2(n)^{-1/2}/f(i)) }  \\
&= e^{\theta G_2(n)^{-1/2} G_1(n)  - \sum_{i=1}^{n} \sum_{k=1}^\infty \frac{(-1)^{k-1} \theta^k}{k} \cdot \frac{1}{G_2(n)^{k/2} f(i)^k}  } \\
&= e^{\theta G_2(n)^{-1/2} G_1(n)  - \sum_{k=1}^\infty \frac{(-1)^{k-1} \theta^k}{k} \cdot \frac{G_k(n)}{G_2(n)^{k/2} }} \\
&=  e^{ \sum_{k=2}^\infty \frac{(-1)^{k} \theta^k}{k} \cdot \frac{G_k(n)}{G_2(n)^{k/2} }}  \to E[ e^{\theta Z} ],
\end{align*}
as $n \to \infty$, where in the last step we used the observation that if $G_2(\infty) = \infty$, then $G_k(n)/G_2(n)^{k/2} \to 0$ as $n \to \infty$ for all $k \geq 3$ and if $G_2(\infty)< \infty$ then $G_k(n)/G_2(n)^{k/2} \to a_k$. It follows that 
$$P\left( \frac{\xi_f(t) - G_1^{-1}(t)}{V(t)} > z \right)  = P\left( \tilde Z( \lfloor G_1^{-1}(t) + z V(t) \rfloor +1) \geq \frac{G_1(\lfloor G_1^{-1}(t) + z V(t) \rfloor+1)  -t }{G_2( \lfloor G_1^{-1}(t) + z V(t) \rfloor +1)^{1/2}}   \right) ,$$
where $ \tilde Z( \lfloor G_1^{-1}(t) + z V(t) \rfloor +1) \Rightarrow Z$ as $t \to \infty$ for any fixed $z \geq -G_1^{-1}(t)/V(t)$. 
To complete the proof, note that for any fixed $z$, 
\begin{align*}
\frac{G_1(\lfloor G_1^{-1}(t) + z V(t) \rfloor+1)  -t }{G_2( \lfloor G_1^{-1}(t) + z V(t) \rfloor +1)^{1/2}}  &= \frac{z V(t)}{f(G_1^{-1}(t)) G_2(G_1^{-1}(t))^{1/2} } (1+o(1) ) = z (1+ o(1))
\end{align*}
as $t \to \infty$, and therefore, observing that $-G_1^{-1}(t)/V(t) \to -\infty$ as $t \to \infty$, we have for any $z \in \mathbb{R}$,
$$P\left( \tilde Z( \lfloor G_1^{-1}(t) + z V(t) \rfloor +1) \geq \frac{G_1(\lfloor G_1^{-1}(t) + z V(t) \rfloor+1)  -t }{G_2( \lfloor G_1^{-1}(t) + z V(t) \rfloor +1)^{1/2}}   \right)  \to P(Z \geq z), \qquad t \to \infty.$$
\end{proof}

\section*{Acknowledgements}
We thank Shankar Bhamidi for helpful discussions and advice throughout the course of writing this article.
Research supported in part by  the NSF RTG award (DMS-2134107).  Sayan Banerjee and Prabhanka Deka were supported in part by the NSF-CAREER award (DMS-2141621).

%% The Appendices part is started with the command \appendix;
%% appendix sections are then done as normal sections
%% \appendix

%% \section{}
%% \label{}

%% If you have bibdatabase file and want bibtex to generate the
%% bibitems, please use
%%
%% \bibliographystyle{elsarticle-num} 
%% \bibliography{scaling}

%% else use the following coding to input the bibitems directly in the
%% TeX file.

\end{document}